\definecolor{lbcolor}{rgb}{0.9,0.9,0.9}
\numberwithin{equation}{section}
\newcolumntype{r}{D{.}{.}{-1}}
\newcommand\OSCAR{\textsc{OSCAR}\xspace}
\newcommand\Singular{\textsc{Singular}\xspace}
\newcounter{truefigure}
\renewcommand{\p@subfigure}{}
\newtheorem{theorem}{Theorem}[section]
\theoremstyle{plain}
\newtheorem{corollary}[theorem]{Corollary}
\theoremstyle{definition}
\newtheorem{definition}[theorem]{Definition}
\newtheorem{example}[theorem]{Example}
\theoremstyle{plain}
\newtheorem{proposition}[theorem]{Proposition}
\theoremstyle{remark}
\newtheorem{remark}[theorem]{Remark}
\numberwithin{equation}{section}
\definecolor{lbcolor}{rgb}{0.9,0.9,0.9}
\DeclareMathOperator{\Hom}{Hom}
\DeclareMathOperator{\mult}{mult}
\newcommand{\tmfloatcontents}{}
\newlength{\tmfloatwidth}
\newcommand{\tmfloat}[5]{
	\renewcommand{\tmfloatcontents}{#4}
	\setlength{\tmfloatwidth}{\widthof{\tmfloatcontents}+1in}
	\ifthenelse{\equal{#2}{small}}
	{\setlength{\tmfloatwidth}{0.45\linewidth}}
	{\setlength{\tmfloatwidth}{\linewidth}}
	\begin{minipage}[#1]{\tmfloatwidth}
		\begin{center}
			\tmfloatcontents
			\captionof{#3}{#5}
		\end{center}
\end{minipage}}
\newcommand {\PP}{{\mathbb P}}
\newcommand {\ZZ}{{\mathbb Z}}
\newcommand {\NN}{{\mathbb N}}
\newcommand {\TT}{{\mathbb T}}
\newcommand {\CC}{{\mathbb C}}
\newcommand\QQ{\mathbb{Q}} 
\DeclareMathOperator{\rad}{{rad}}
\DeclareMathOperator{\Lt}{{L}}
\def\lexF{\textrm{lex}}
\def\degrevlex{\textrm{degrevlex}}
\newcommand\II{{\rm I}}
\def\Cbar{{\overline C}}
\def\drlex{\textrm{drlex}}
\DeclareMathOperator{\Tor}{Tor}
\def\Input{\textbf{Input. }}
\def\Output{\textbf{Output. }}
\DeclareMathOperator{\subquo}{{subquo}}
\DeclareMathOperator{\imF}{im}
\def\pp{{\mathfrak p}}
\DeclareMathOperator{\Ext}{Ext}
\newcommand\cU{{\mathcal U}}
\newcommand\cM{{\mathcal M}}
\newcommand\cI{{\mathcal I}}
\newcommand\cF{{\mathcal F}}
\newcommand\cO{{\mathcal O}}
\newcommand\FF{\mathbb{F}}
\newcommand\GG{\mathbb{G}}
\DeclareMathOperator{\Spec}{Spec}
\def\sO{{\mathcal O}}
\newenvironment{tightarray}[1]{%
  \setlength{\arraycolsep}{0.5pt}%
  \begin{array}{#1}%
}{%
  \end{array}%
}
\begin{document}
\title{Commutative Algebra and Algebraic Geometry using \OSCAR}
\author{Janko Böhm, Wolfram Decker, Frank-Olaf Schreyer}
\address{Janko B\"ohm, Fachbereich Mathematik,
RPTU Kaiserslautern-Landau, Postfach 3049, 67653 Kaiserslautern, Germany}
\email{jboehm@rptu.de}
\address{Wolfram Decker, Fachbereich Mathematik,
RPTU Kaiserslautern-Landau, Postfach 3049, 67653 Kaiserslautern, Germany}
\email{wolfram.decker@rptu.de}
\address{Frank-Olaf Schreyer, Mathematik und Informatik,
Universität des Saarlandes, 66123 Saarbrücken, Germany}
\email{schreyer@math.uni-sb.de}

\keywords{Computational commutative algebra, computational  algebraic geometry, Gröbner bases, syzygies, free resolutions, cohomology, parametrizations, deformations, algebraic surfaces}

\subjclass[2010]{Primary 14Qxx, 13P10, 13D02; Secondary 14H50, 14Jxx, 14F05, 14Dxx, 14C30, 14D15}

\maketitle

\begin{abstract}
We give illustrative examples of how the computer algebra system \OSCAR can support research in commutative algebra and algebraic geometry. 
We start with a thorough introduction to Gröbner basis techniques, with particular emphasis
on the computation of syzygies, then apply these techniques to deal with ideal and ring theoretic concepts such as primary 
decomposition and normalization, and finally use them for geometric case studies which concern curves and surfaces, both 
from a local and global point of view.
\end{abstract}

\tableofcontents

\section{Introduction}

Most of mathematics is concerned at some level with setting up and solving various types of equations. Algebraic geometry 
is the mathematical discipline handling solution sets of systems of polynomial equations. By making use of a correspondence 
which relates the solution sets to ideals in polynomial rings, algebraic geometers translate problems concerning the geometry 
of the solution sets into algebra. Commutative algebra has developed to help solve these
problems as well as problems originating from number theory.

In modern times, support comes from computer algebra techniques, notably from the concept of Gröbner bases, where the main workhorse is Buchberger's algorithm for computing them. We will discuss a number of techniques relying on Buchberger's algorithm, with particular emphasis on the computation of syzygies. And we will show these techniques at work in \OSCAR, mainly focusing on geometrically motivated examples.

The integration of the commutative algebra and algebraic geometry functionality of \OSCAR with techniques of the other cornerstones allows for extended algorithmic applications. Examples include algorithms for computing GIT-fans with symmetries~\cite{gitbook}, fibration hopping for elliptically fibered K3 surfaces~\cite{BrandhorstZach2023}, algorithmic invariant theory~\cite{decker-schmitt-invariant-theory}, and F-Theory~\cite{chapter-F-theory-applications-in-oscar-book:2024}.

The textbooks \cite{GP08, cox2015} provide excellent introductions to both the geometry--algebra dictionary and the use of 
Gröbner bases in the study of geometric questions by algebraic means. We refer to them for basic details and proofs not given here.

\textbf{The Role of the Ground Field}  Throughout this chapter, $K$ will be an algebraically closed field, and $k\subset K$ a subfield.
Our geometric objects of study will be algebraic sets in affine or projective space over $K$ which are defined
by polynomial equations with coefficients in $k$. When it comes to explicit computations, we suppose that
the arithmetic of $k$ can be handled by \OSCAR. As a typical example, consider $K=\CC$ and $k=\QQ$.
In explicit examples, unless otherwise mentioned, we tacitly assume that we work with this pair.
Note that on the computational side, most questions considered can be checked using Gröbner bases.
A crucial point then is that Buchberger’s algorithm for finding such bases does not extend the given ground field. 
So most problems considered can be handled by computations over $k$. For example, based on Hilbert's Nullstellensatz, 
one can decide whether an algebraic system of equations defined over $k$ has a solution over $K$. On the other hand,
testing whether an ideal is radical cannot be decided using Gröbner bases alone. Note, however, that if $k$ is perfect, and 
$I\subset  k[x_{1},\ldots,x_{n}]$ is a radical ideal\footnote{If $R$ is a commutative ring, the \emph{radical} of an ideal 
$I \subset R$ is the ideal $\rad(I):=\{f\in R \mid \, \text{ there exists } N \ge 0  \hbox{ such that } f^{N}\in I \}.$ $I$ 
is a \emph{radical ideal} if $I=\rad(I)$.}, then also the extended ideal\footnote{The extended ideal $I K[x_{1},\ldots,x_{n}]$
is the ideal generated by the elements of $I$ in the larger polynomial ring obtained by extending the coefficient field from 
$k$ to $K$.} $I K[x_{1},\ldots,x_{n}]$ is a radical ideal (see \cite[VII, §11]{ZSCA}). We therefore tacitly assume that $k$ is perfect 
whenever the radical condition comes into play. The concept of primary decomposition depends on the choice of $k$. 
See Example \ref{ex:primdec}.

\section{Gr\"obner Basics and the Affine Geometry--Algebra Dictionary}

We write $\AA^{n} = \AA^{n}(K)$ for the \emph{affine $n$-space} over $K$.
Given a multivariate polynomial $f\in k[x_{1},\ldots,x_{n}]$, we call the set 
$$V(f) = \{ (a_{1},\ldots,a_{n}) \in K^{n} \mid f(a_{1},\ldots,a_{n})=0 \} \subset \AA^{n}$$
the \emph{vanishing locus} of $f$ (over $K$). The \emph{vanishing locus}
of finitely many polynomials $f_{1},\ldots,f_{r} \in k[x_{1},\ldots,x_{n}]$ 
is the simultaneous solution set
$$V(f_{1},\ldots,f_{r}) \;=\;\bigcap_{i=1}^{r} V(f_{i}) \subset \AA^{n}.$$
The \emph{ideal} generated by $f_{1},\ldots,f_{r}\in k[x_{1},\ldots,x_{n}]$ is written as
$$(f_{1},\ldots,f_{r}) :=\{ g_{1}f_{1}+\ldots+g_{r}f_{r} \mid g_{i} \in k[x_{1},\ldots,x_{r}] \}.$$
If $I$ is this ideal, we write $V(I) = V(f_{1},\ldots,f_{r})$.

\begin{theorem}[Hilbert's Nullstellensatz] With notation as above, $$V(f_{1},\ldots,f_{r})=\emptyset \;\Longleftrightarrow \; 1 \in (f_{1},\ldots,f_{r}).$$
\end{theorem}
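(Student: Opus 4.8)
The plan is to separate the trivial implication from the substantial one, and to reduce the latter to a statement purely over $K$. The direction ``$\Longleftarrow$'' requires nothing: from a representation $1 = g_{1}f_{1}+\dots+g_{r}f_{r}$ one evaluates both sides at a hypothetical common zero $a\in V(f_{1},\dots,f_{r})$ and obtains $1=0$, so $V(f_{1},\dots,f_{r})$ must be empty. For ``$\Longrightarrow$'' I would first pass to the extended ideal $J := (f_{1},\dots,f_{r})\,K[x_{1},\dots,x_{n}]$. Because the reduced Gröbner basis of an ideal is the same whether one computes over $k$ or over $K$ (equivalently, because $K[x_{1},\dots,x_{n}]$ is flat over $k[x_{1},\dots,x_{n}]$), we have $1\in J$ if and only if $1\in(f_{1},\dots,f_{r})$; and clearly $V(J)=V(f_{1},\dots,f_{r})$. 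So it suffices to prove: every proper ideal $J\subsetneq K[x_{1},\dots,x_{n}]$ has $V(J)\neq\emptyset$.

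For this, the key steps are: (1) enlarge $J$ to a maximal ideal $\mathfrak m\supseteq J$, which exists since $K[x_{1},\dots,x_{n}]$ is Noetherian; (2) form the residue field $L := K[x_{1},\dots,x_{n}]/\mathfrak m$, which is a field that is finitely generated as a $K$-algebra; (3) invoke Zariski's lemma: a field that is finitely generated as an algebra over a field $K$ is a finite, hence algebraic, extension of $K$; (4) since $K$ is algebraically closed, the structure map $K\to L$ is then an isomorphism. Letting $a_{i}\in K$ correspond to the residue class of $x_{i}$ in $L$ and putting $a=(a_{1},\dots,a_{n})$, the composite $K[x_{1},\dots,x_{n}]\to L\xrightarrow{\ \sim\ }K$ is a $K$-algebra homomorphism sending $x_{i}\mapsto a_{i}$, i.e.\ it is evaluation at $a$; hence every $g\in\mathfrak m$ satisfies $g(a)=0$. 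In particular $f_{i}(a)=0$ for all $i$, so $a\in V(J)=V(f_{1},\dots,f_{r})$, which is therefore nonempty.

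The one genuinely nontrivial ingredient is Zariski's lemma in step (3); the remaining steps are formal. I would deduce it from Noether normalization: $L$ is a finite module over a polynomial subalgebra $K[y_{1},\dots,y_{d}]\subseteq L$, and since $L$ is a field that is integral over $K[y_{1},\dots,y_{d}]$, that subring must itself be a field, which forces $d=0$ and hence $L$ finite over $K$. A more elementary alternative, taken in \cite{cox2015}, sidesteps Noether normalization and argues by induction on $n$, using resultants to eliminate one variable at a time; the base case $n=1$ is simply that a proper ideal of the principal ideal domain $K[x_{1}]$ is generated by a nonconstant polynomial, which has a root because $K$ is algebraically closed. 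Either way, the crux is the same: controlling how the property of being a field propagates through a finite, or integral, ring extension.
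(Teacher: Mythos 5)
The paper itself does not prove Hilbert's Nullstellensatz: it is stated without proof, with the reader referred to the textbooks \cite{GP08, cox2015} for details. Your argument is a correct and standard proof, so there is no discrepancy to report, only a comparison with the background references. A few observations. Your reduction from $k$ to $K$ — arguing that $1\in(f_1,\dots,f_r)k[x_1,\dots,x_n]$ iff $1\in(f_1,\dots,f_r)K[x_1,\dots,x_n]$ because the reduced Gröbner basis is unchanged under field extension — is exactly the point the paper flags in its introduction (``Buchberger's algorithm for finding such bases does not extend the given ground field''); this is the conceptually clean way to justify why the paper's mixed $k$/$K$ formulation of the theorem is harmless, and it is worth making explicit as you did. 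For the core statement over $K$, you take the maximal-ideal-plus-Zariski's-lemma route, with Zariski's lemma deduced from Noether normalization via the standard fact that a domain integral over a subring is a field iff the subring is. This is the proof in \cite{GP08} and in most commutative algebra texts, and is shorter and more structural than the resultant/elimination induction taken in \cite{cox2015}, which you correctly identify as the more elementary alternative (avoiding Noether normalization at the cost of more computation). Both are sound; yours buys brevity and a clearer view of where algebraic closure of $K$ enters (step (4), where the finite extension $L/K$ collapses to $K$), while the resultant argument is more in the spirit of the effective, Gröbner-basis-driven methods the rest of the chapter develops. No gaps.
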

\noindent
Thus, in our setting, the existence question for solutions over $K$ can be decided by solving the more general problem below:\\

\noindent
\emph{Ideal Membership Problem} Given $f,f_{1},\ldots,f_{r} \in k[x_{1},\ldots,x_{n}]$, decide whether $$f \in (f_{1},\ldots,f_{r}).$$

Taking our cue from the case of one variable, in which Euclidean division with remainder provides a solution to the ideal membership problem, we extend the division algorithm to polynomials in more than one variable, allowing at the same time more than one divisor. The extended algorithm, however, does not provide an immediate solution to the multivariate ideal membership problem.
Gröbner bases are designed to remedy this problem.

Univariate monomials are usually sorted according to their degree. The starting point for the concepts of multivariate division with remainder and Gröbner bases is that we have ways of sorting multivariate monomials.

\begin{definition} A \emph{monomial} in $k[x_{1},\ldots,x_{n}]$ is a power product
$$x^{\alpha}=x_{1}^{\alpha_{1}}\cdots x_{n}^{\alpha_{n}},  \;\text{ where }\; \alpha = (\alpha_{1}, \dots,  \alpha_{n})\in\NN^n.$$
A \emph{monomial ideal} in  $k[x_{1},\ldots,x_{n}]$ is an ideal generated by monomials.
A \emph{monomial ordering} $>$ on $k[x_{1},\ldots,x_{n}]$ is a total ordering $>$ on the set of monomials in $k[x_{1},\ldots,x_{n}]$ which is compatible with multiplication:
$$x^{\alpha} > x^{\beta} \implies x^{\alpha}x^{\gamma} > x^{\beta}x^{\gamma} \text{ for any triple } x^{\alpha}, x^{\beta}, x^{\gamma} \text{ of monomials}. $$
\end{definition}

A \emph{term} in $k[x_{1}\ldots,x_{n}]$ is the product $ax^{\alpha}$ of a scalar $a \in k$ and a monomial $x^{\alpha}$. If $0\neq f =\sum_{\alpha}f_{\alpha}x^{\alpha}\in 
k[x_{1}\ldots,x_{n}]$ is a polynomial  written as the finite sum of its nonzero terms, then the  \emph{leading term} of $f$ with respect to $>$ is
$$\Lt(f) \;\!:=\;\! \Lt_>(f) \;\!:=\;\! f_{\beta}x^{\beta}, \hbox{ where } x^{\beta}= \max \{ x^{\alpha} \mid f_{\alpha}\not=0 \}.$$

A monomial ordering $>$ is called \emph{global} if $x_{i} > 1$ for all $i$. In contrast, a \emph{local} monomial ordering satisfies $1 > x_{i}$ for all $i$. Note that $>$ is global iff it is a well-ordering.
The word global is used to indicate that Gröbner basis computations with respect to such orderings are used to study vanishing loci in their entirety.
Rather than taking this global point of view, however, one may be interested in examining the behaviour of such a set ``near''
 one of its points as we do in the next section.
Here, local orderings are used. \textbf{For the rest of this section}, $>$ will be a global monomial ordering on $k[x_{1},\ldots,x_{n}]$.
Unless otherwise indicated, all our considerations will be relative to this ordering.

\begin{theorem}[Division With Remainder]\label{thm:division} Let $f_{1},\ldots,f_{r} \in k[x_{1},\ldots,x_{n}]$ be nonzero. 
For each $f\in k[x_{1},\ldots,x_{n}]$, there is a unique expression
$
f=g_{1}f_{1}+\ldots +g_{r}f_{r} +h, {\text{ with }} g_{1},\ldots,g_{r}, h \in k[x_{1},\ldots,x_{n}],
$
and such that
\begin{enumerate}
\item\label{thm:divrem-1a} for $i > j$, no term of $g_{i}\Lt(f_{i})$ is divisible by $\Lt(f_{j})$;
\item\label{thm:divrem-2a} for all $i$, no term of $h$ is divisible by $\Lt(f_{i})$.
\end{enumerate} 
\end{theorem}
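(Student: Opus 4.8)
The plan is to get existence from an explicit division procedure that scans the divisors in the given index order, and to get uniqueness from a separate cancellation argument in which condition~\ref{thm:divrem-1a} does the decisive work. For existence I would run the following algorithm: set $p:=f$ and $g_{1}=\dots=g_{r}=h=0$, and keep throughout the invariant $f=g_{1}f_{1}+\dots+g_{r}f_{r}+h+p$. While $p\neq 0$, inspect $\Lt(p)$: if $\Lt(p)$ is divisible by some $\Lt(f_{i})$, choose the \emph{smallest} such index $i$, put $t:=\Lt(p)/\Lt(f_{i})$, replace $p$ by $p-tf_{i}$, and add $t$ to $g_{i}$; otherwise move the single term $\Lt(p)$ out of $p$ and into $h$. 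In either case the leading monomial of $p$ strictly decreases --- in the first case because every term of $tf_{i}$ other than $t\Lt(f_{i})=\Lt(p)$ is smaller than $\Lt(p)$ by compatibility of $>$ with multiplication --- so, since a global ordering is a well-ordering, the loop terminates with $p=0$. The output satisfies \ref{thm:divrem-2a} because a term enters $h$ only when it is divisible by no $\Lt(f_{i})$, and it satisfies \ref{thm:divrem-1a} because each term $t$ added to $g_{i}$ has $t\Lt(f_{i})=\Lt(p)$ at that moment while $i$ was chosen minimal with $\Lt(f_{i})\mid\Lt(p)$, so $t\Lt(f_{i})$ is divisible by no $\Lt(f_{j})$ with $j<i$.

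For uniqueness, suppose $f=\sum_{i}g_{i}f_{i}+h=\sum_{i}g_{i}'f_{i}+h'$ are two expressions satisfying \ref{thm:divrem-1a} and \ref{thm:divrem-2a}, and set $d_{i}:=g_{i}-g_{i}'$, so that $\sum_{i}d_{i}f_{i}=h'-h$. Since \ref{thm:divrem-1a} and \ref{thm:divrem-2a} are restrictions on which monomials may occur in the respective supports, each $d_{i}$ still obeys \ref{thm:divrem-1a} and $h'-h$ still obeys \ref{thm:divrem-2a}. Assume, for contradiction, that not every $d_{i}$ is zero; let $M$ be the largest monomial occurring among $\Lt(d_{i}f_{i})$ over the indices $i$ with $d_{i}\neq 0$, and let $S$ be the set of indices attaining this maximum. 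For $i\notin S$ the monomial $M$ does not occur in $d_{i}f_{i}$ at all, while for $i\in S$ it occurs in $d_{i}f_{i}$ only as the product $\Lt(d_{i})\cdot\Lt(f_{i})$: indeed, if $M=mm'$ with $m$ in the support of $d_{i}$ and $m'$ in the support of $f_{i}$, then $mm'\le\Lt(d_{i})m'\le\Lt(d_{i})\Lt(f_{i})=M$, and equality throughout forces $m=\Lt(d_{i})$ and $m'=\Lt(f_{i})$. Hence the coefficient of $M$ in $h'-h$ equals $\sum_{i\in S}c_{i}$, where $c_{i}$ is the product of the leading coefficients of $d_{i}$ and of $f_{i}$.

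Now let $i_{0}:=\max S$. If some $j\in S$ satisfied $j<i_{0}$, then $\Lt(f_{j})\mid M$; but $M=\Lt(d_{i_{0}})\Lt(f_{i_{0}})$ is a term of $d_{i_{0}}\Lt(f_{i_{0}})$, contradicting \ref{thm:divrem-1a} for $d_{i_{0}}$ applied with $i_{0}>j$. So $S=\{i_{0}\}$, and then the coefficient of $M$ in $h'-h$ is $c_{i_{0}}\neq 0$ because $k$ is a field; thus $M$ is a term of $h'-h$ divisible by $\Lt(f_{i_{0}})$, contradicting \ref{thm:divrem-2a}. Therefore all $d_{i}$ vanish, and then $h=h'$ as well, which proves uniqueness. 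The termination and bookkeeping in the algorithm are routine; the real obstacle is this uniqueness step, specifically ruling out the cancellation of the leading terms of several $d_{i}f_{i}$ that share a common maximal monomial $M$ --- and it is precisely condition \ref{thm:divrem-1a}, the index-ordering constraint on the quotients, that makes this possible (without it one recovers only the weaker statement that $h$ alone is determined, while the way the ``top part'' is split among the $f_{i}$ is not).
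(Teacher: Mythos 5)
Your argument is correct. For existence, you and the paper share the essential mechanism: reduce against the \emph{smallest} index $i$ with $\Lt(f_i)$ dividing the term under inspection (which is exactly what enforces conditions \ref{thm:divrem-1a} and \ref{thm:divrem-2a}), move undivisible terms into $h$, and conclude termination from the well-ordering property of a global order; the only organizational difference is that you clear one term per loop iteration, while the paper bulk-processes all terms of the current remainder against the pure terms $\Lt(f_1),\dots,\Lt(f_r)$ (its ``obvious'' base case of division by term divisors) and then iterates on the residual $f^{(1)} = f - \sum g_i^{(0)}f_i - h^{(0)}$. The more substantive divergence is in uniqueness, which the paper dispatches only with the remark that its division procedure is determinate, whereas you supply a genuine self-contained proof: setting $d_i = g_i - g_i'$, observing that the support constraints in \ref{thm:divrem-1a} and \ref{thm:divrem-2a} are inherited by differences, using \ref{thm:divrem-1a} to show that the largest monomial $M$ among the $\Lt(d_if_i)$ is attained at a \emph{unique} index $i_0 = \max S$, and then invoking \ref{thm:divrem-2a} to exclude $M$ from $h' - h$. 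This cancellation argument is exactly what the paper's remark elides, and your closing observation --- that dropping condition \ref{thm:divrem-1a} would leave only $h$, not the individual $g_i$, determined --- correctly identifies which hypothesis carries the uniqueness. Two small pedantries, neither a gap: you need only that $k$ is an integral domain (not a field) for $c_{i_0}\neq 0$, and ``$\Lt(f_j)$ divides $M$'' must be read modulo scalars since $\Lt(f_j)$ is a term rather than a monomial, but both are consistent with the paper's own conventions.
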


\begin{proof}[Existence] The statement is obvious if $f_{1},\ldots,f_{r}$ are terms. Applying this to $f$ and the $\Lt(f_{i})$, we get an expression
$$f= g_{1}^{(0)}\Lt(f_{1})+\ldots +g_{r}^{(0)}\Lt(f_{r})+h^{(0)},$$
where $h^{(0)}, g_{1}^{(0)},\ldots,g_{r}^{(0)}$ satisfy conditions \ref{thm:divrem-1a} and \ref{thm:divrem-2a} above.
Setting
$$f^{(1)}= f - (g_{1}^{(0)}f_{1}+\ldots +g_{r}^{(0)}f_{r}+h^{(0)}),$$
either $f^{(1)}$ is zero, and we are done, or $\Lt(f) > \Lt(f^{(1)})$. Iterating this, the resulting process must terminate 
since our underlying monomial ordering is assumed to be global and, thus, a well-ordering.
\end{proof}

The uniqueness part above corresponds to the fact that the division algorithm in 
the proof is \emph{determinate}\footnote{For many applications we may allow some indeterminacy 
of the division algorithm. See the \OSCAR user manual.} in that it does not make any choices in the process. We 
call $h$ the \emph{remainder upon determinate division of $f$ by $f_{1},\ldots,f_{r}$.} Note,
however, that due to condition \ref{thm:divrem-1a}, changing the order in which $f_{1},\ldots,f_{r}$ 
are listed may lead to a different remainder. Even worse, $f \in I = (f_{1},\ldots,f_{r})$ does not necessarily imply that $h$ 
is zero. As we will see, the latter is guaranteed if $f_{1},\ldots,f_{r}$ form a Gr\"obner basis of $(f_{1},\ldots,f_{r})$. 

\begin{definition} The \emph{leading ideal} of an ideal $I \subset k[x_{1},\ldots,x_{n}]$ is the monomial ideal
$$\Lt(I) \;\!:=\; \! \Lt_>(f) \;\!:=\; \! (\Lt(f) \mid f \in I).$$ 
A collection of elements $g_{1}, \ldots,g_{s}\in I$ is called a \emph{Gr\"obner basis} of $I$ if $$\Lt(I)=(\Lt(g_{1}),\ldots,\Lt(g_{s})).$$
\end{definition}
\noindent

\begin{proposition} Let $I \subset k[x_{1},\ldots,x_{n}]$ be an ideal.
\begin{enumerate}
\item 
If $g_{1},\ldots, g_{s}$ is a Gr\"obner basis of an ideal $I$, then a polynomial $f\in k[x_{1},\ldots,x_{n}]$
is contained in $I$ iff the remainder upon determinate division of $f$ by $g_{1},\ldots,g_{s}$ is zero.
\item The monomials $x^{\alpha}$ with $x^{\alpha} \notin \Lt(I)$ represent a $k$-vector space basis of the quotient $k[x_{1},\ldots,x_{n}]/I$.
We refer to these monomials as {\emph {standard monomials}} (for $I$, with respect to $>$).
\end{enumerate}
\end{proposition}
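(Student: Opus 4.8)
The plan is to derive both statements from one elementary observation about monomial ideals: a monomial $x^{\alpha}$ lies in a monomial ideal $(m_{1},\ldots,m_{s})$ if and only if it is divisible by one of the generators $m_{i}$. This is immediate by comparing terms — any element of the monomial ideal is a sum $\sum_{j} h_{j}m_{j}$, and a monomial occurring in such a sum must occur in some $h_{j}m_{j}$, hence be a multiple of $m_{j}$. I would record this as a short lemma. Combined with the defining property $\Lt(I)=(\Lt(g_{1}),\ldots,\Lt(g_{s}))$ of a Gröbner basis, it converts ``divisible by some $\Lt(g_{i})$'' into ``lies in $\Lt(I)$'' and back; in particular every nonzero $f\in I$ has $\Lt(f)\in\Lt(I)$ divisible by some $\Lt(g_{i})$.

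For part (1), one direction is immediate from Theorem~\ref{thm:division}: writing $f=g_{1}q_{1}+\ldots+g_{s}q_{s}+h$ with $h=0$ exhibits $f$ as an element of $I$. For the converse, suppose $f\in I$ but the remainder $h$ is nonzero. Since $f\in I$ and $g_{1}q_{1}+\ldots+g_{s}q_{s}\in I$, we get $h=f-\sum_{i} q_{i}g_{i}\in I$, so $\Lt(h)\in\Lt(I)$ and, by the lemma, $\Lt(h)$ is divisible by some $\Lt(g_{i})$. But the second condition in Theorem~\ref{thm:division} says no term of $h$ — in particular not its leading term — is divisible by any $\Lt(g_{i})$, a contradiction; hence $h=0$.

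For part (2), I would treat spanning and linear independence separately. Spanning: given $f\in k[x_{1},\ldots,x_{n}]$, let $h$ be its remainder upon determinate division by $g_{1},\ldots,g_{s}$. By the second condition in Theorem~\ref{thm:division} no term of $h$ is divisible by any $\Lt(g_{i})$, so by the lemma no monomial of $h$ lies in $\Lt(I)$; thus $h$ is a $k$-linear combination of standard monomials, and $f-h\in I$ shows $h$ represents the class of $f$ in $k[x_{1},\ldots,x_{n}]/I$. Linear independence: suppose $p=\sum_{\alpha}c_{\alpha}x^{\alpha}\in I$ is a finite $k$-linear combination of standard monomials with not all $c_{\alpha}$ zero. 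Then $p$ is a nonzero element of $I$, so $\Lt(p)\in\Lt(I)$ is divisible by some $\Lt(g_{i})$; but $\Lt(p)$ is a scalar times one of the monomials $x^{\alpha}$ appearing in $p$, which is standard and hence not in $\Lt(I)$ — a contradiction. Therefore all $c_{\alpha}$ vanish.

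The only real subtlety is the auxiliary lemma on monomial ideals, i.e. the fact that membership of a single monomial in a monomial ideal is detected by divisibility by a generator; everything else is bookkeeping with the uniqueness and the two divisibility conditions of Theorem~\ref{thm:division}. I do not expect a serious obstacle beyond being careful to apply ``no term of $h$ is divisible by $\Lt(g_{i})$'' to the leading term in part (1) and to every monomial of $h$ in part (2), and to note that the $g_{i}$ are nonzero (being elements of $I$ with well-defined leading terms) so that Theorem~\ref{thm:division} applies.
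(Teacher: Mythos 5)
Your proof is correct, and it is the standard textbook argument. The paper itself does not include a proof of this proposition (it defers basic facts to the cited textbooks \cite{GP08, cox2015}), so there is no in-paper proof to compare against; the route you take---isolate the monomial-ideal lemma (``a monomial lies in a monomial ideal iff divisible by a generator''), use it together with condition~\ref{thm:divrem-2a} of Theorem~\ref{thm:division} to force $h=0$ in part (1), and then reuse the same lemma for both spanning and independence in part (2)---is exactly what one would find in those references. The minor point you implicitly glide over is that the $\Lt(g_i)$ are terms rather than monomials, but since $k$ is a field this affects nothing, and your remark that the $g_i$ are nonzero so that Theorem~\ref{thm:division} applies is the right thing to check. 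No gaps.
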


\begin{remark}\label{rem:normalform} If $g_{1},\ldots, g_{s}$ is a Gr\"obner basis of an ideal $I \subset k[x_{1},\ldots,x_{n}]$,
and $f\in I$, then the remainder upon determinate division of $f$ by $g_{1},\ldots,g_{s}$ is determined by $f$, $I$, 
and $>$ (and does not depend on the choice of Gr\"obner basis). It represents the residue class $f+I\in  k[x_{1},\ldots,x_{n}]/I $ 
in terms of the standard monomials. We speak of the \emph{normal form}  of $f$ mod $I$,  with respect to $>$.
\end{remark}

\noindent
If $I\subset k[x_{1},\ldots,x_{n}]$ is an ideal, a Gr\"obner basis of $I$ can be computed from any given set of generators using \emph{Buchberger's algorithm}.  This algorithm is a generalization of both Gaussian elimination for systems  of linear equations and Euclidean division with remainder in the univariate case. In Section \ref{syzygies}, we will make a few comments on how the algorithm works.

\begin{remark} A Gröbner basis $g_{1},\ldots, g_{s}$ computed by \OSCAR is \emph{minimal} in the sense that $\Lt(g_i)$ is not divisible by $\Lt(g_j)$, 
for $i \neq j$. A Gröbner basis is \emph{reduced} if it is minimal and no term of $g_i$ is divisible by $\Lt(g_j)$, for $i \neq j$. Note that these 
definitions of minimal and reduced deviate from those in textbooks as we do not ask that the leading coefficients of the 
Gröbner basis elements are one. A reduced Gröbner basis fulfilling this last property is unique.
\end{remark}

\begin{remark}
Gr\"obner bases were introduced by Gordan \cite{Gor99} who used them to give his own proof of Hilbert's basis theorem. In fact,
Gordan deduced the theorem for arbitrary ideals $I \subset k[x_{1},\ldots,x_{n}]$ from the combinatorial statement that monomial 
ideals are finitely generated (Dixon's Lemma).
\end{remark}

There is a multitude of monomial orderings. Of particular importance are the \emph{lexicographic ordering} $>_{\lexF}$ and the 
\emph{degree reverse lexicographic ordering} $>_{\degrevlex}$:
\begin{align*}
x^{\alpha}&>_{\lexF} x^{\beta}&:\Longleftrightarrow\;& \hbox{the first nonzero entry of $\alpha-\beta \in \ZZ^{n}$ is positive}.\\
x^{\alpha}&>_{\degrevlex} x^{\beta} &:\Longleftrightarrow\;& \deg x^{\alpha} >\deg x^{\beta} \hbox{ or }
( \deg x^{\alpha} =\deg x^{\beta} \hbox{ and } \\ 
&&&\hbox{ the last nonzero entry of $\alpha-\beta \in \ZZ^{n}$ is negative}).
\end{align*}
\noindent
Note that the above definitions depend on the ordering $x_1>\dots >x_n$ of the variables. The difference between $>_{\lexF}$ and $>_{\degrevlex}$
is subtle but crucial. For example, we have
$$x_{1}^{2} >_{\lexF} x_{1}x_{2} >_{\lexF} x_{1}x_{3} >_{\lexF} x_{2}^{2}$$
but
$$x_{1}^{2} >_{\degrevlex} x_{1}x_{2} >_{\degrevlex} x_{2}^{2}>_{\degrevlex} x_{1}x_{3}.$$
Using $>_{\degrevlex}$ aims at obtaining leading terms involving as few variables as possible. This typically has the effect that using Buchberger's algorithm with $>_{\degrevlex}$ results in a better performance concerning CPU time and memory usage.

The lexicographic ordering, on the other hand, can be used to eliminate any initial set of variables. In fact, the \emph{elimination property}
$$ \Lt(f) \in k[x_{i},\ldots,x_{n}] \;\Longrightarrow\; f \in k[x_{i},\ldots,x_{n}]$$ 
holds for each subring $k[x_{i},\ldots,x_{n}] \subset k[x_{1},\ldots,x_{n}]$. See Example \ref{ex:lex-elim} below.

\begin{remark}
So a single Gr\"obner basis computation with respect to $>_{\lexF}$
yields the whole flag of \emph{elimination ideals} $I_m=I \cap k[x_{m+1},\ldots,x_n]$,
$ m=0,\ldots,{n-1}$. If only  one of the elimination 
ideals is needed, other monomial orders are usually more efficient.
The \OSCAR function \mintinline{jl}{eliminate} takes this into account.
\end{remark}

\begin{example}\label{ex:lex-elim} Consider the ideal 
$$I=\left({\color{blue}x^{2}+y^{2}+2\,z^{2}-8},\,{\color{red}x^{2}-y^{2}-z^{2}+1},\,{\color{teal}x-y+z}\right) \subset \QQ[x,y,z].$$
Its vanishing locus $V(I) = \bigcap_{f \in I} V(f) \subset \AA^{3}(\CC)$ is the intersection of an ellipsoid, a hyperboloid, and a plane, 
see Figure~\ref{fig ellhypplane}.

\begin{figure}[ht]
\begin{center}
\includegraphics[scale=0.4]{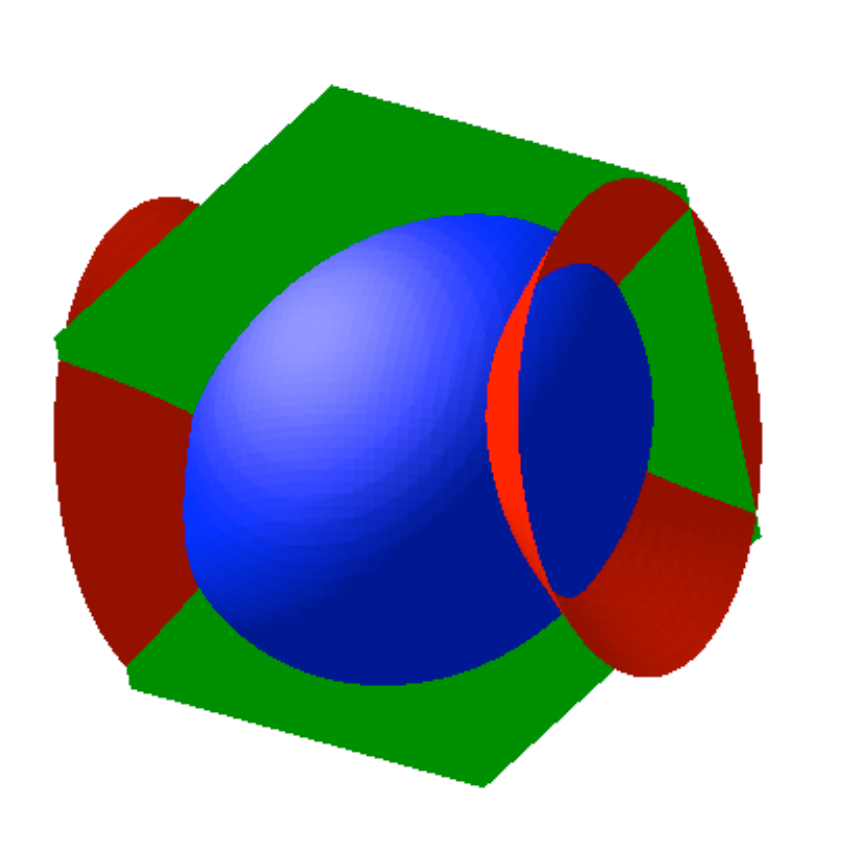}
\end{center}
\caption{Intersection of an ellipsoid, a hyperboloid, and a plane.}
\label{fig ellhypplane}
\end{figure}

\noindent

We use \OSCAR to compute a reduced lexicographic Gröbner basis of $I$:
\inputminted{jlcon}{ex11.jlcon}  %
\noindent Solving the first equation $z^{4}-3\,z^{2}+\frac{1}{6}=0$, we find the four values $z=\pm \sqrt{\frac{3}{2}\pm\sqrt{\frac{9}{4}-\frac{1}{6}}}$. 
Substituting these into the other two equations, we get four points with coordinates $(-3z^{3}+8z,-3z^{3}+9z,z)$.
To determine the distance of each point to the origin, we use division with remainder:
\inputminted{jlcon}{ex11dist.jlcon}
\noindent Thus, the four points are grouped into two pairs of points with equal distance.

\begin{remark} \OSCAR functions such as \mintinline{jl}{groebner_basis} and  \mintinline{jl}{normal_form}
depend on the choice of a monomial ordering which is entered as a keyword argument as shown above. If no ordering 
is entered, the default ordering of the underlying ring $R$ is used. This ordering is \mintinline{jl}{degrevlex} except if 
$R$ is $\mathbb Z$-graded with positive weights. Then the corresponding \mintinline{jl}{wdegrevlex} ordering
is used:
 \inputminted{jlcon}{default.jlcon} 
\end{remark} 

\begin{remark}  Each Gröbner basis computed in \OSCAR is cached and associated to the corresponding ideal for later use. 
In our example above, the cached lexicographic Gröbner basis of \mintinline{jl}{I} is used in the normal form computation. 
If subsequently another monomial ordering is specified when entering the command \mintinline{jl}{normal_form}, the required 
Gröbner basis will be computed and cached in addition to the lexicographic Gröbner basis.
\end{remark} 

\end{example}

\noindent
The triangular structure of the reduced lexicographic Gröbner basis in the example above is typical in the following sense: 

\begin{proposition}\label{prop elim} Let $I \subset k[x_{1},\ldots,x_{n}]$ be a radical ideal such that $V(I) \subset \AA^{n}$ is a finite set 
consisting of, say, $d$ points which project to different points on the $x_{n}$-axis. Then a reduced lexicographic Gr\"obner 
basis of $I$ has $n$ elements. One element is a polynomial in $k[x_{n}]$ of degree $d$, and for each $1\leq i \leq n-1$, there 
is an element contained in $k[x_{i},x_{n}]$ which depends linearly on $x_{i}$.
\end{proposition}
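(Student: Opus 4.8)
The plan is to fix throughout the lexicographic order with $x_1 >_{\lexF} \cdots >_{\lexF} x_n$, and to first pin down the leading ideal $\Lt(I)$ exactly; once we know $\Lt(I)$, the shape of the reduced Gröbner basis follows formally from the uniqueness of reduced bases. Write $A = k[x_1,\ldots,x_n]/I$ and let $\bar x_i$ denote the class of $x_i$. Since $I$ is radical and (by the standing assumption) $k$ is perfect, the extended ideal $IK[x_1,\ldots,x_n]$ is again radical; as $K$ is algebraically closed and $V(I)$ consists of $d$ $K$-points $P_1,\dots,P_d$, the Nullstellensatz writes $IK[x]=\bigcap_j\mathfrak m_{P_j}$ with each $\mathfrak m_{P_j}$ maximal of residue field $K$, and the Chinese Remainder Theorem gives $A\otimes_k K = K[x]/IK[x] \cong K^d$. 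In particular $\dim_k A = d$ and $A$ is reduced.

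First I would analyze $\bar x_n\in A$. On $V(I)$ the function $x_n$ takes, by hypothesis, $d$ distinct values $c_1,\ldots,c_d$, and under the isomorphism $A\otimes_k K\cong K^d$ the element $\bar x_n$ is identified with $(c_1,\ldots,c_d)$. Hence $k[\bar x_n]\otimes_k K$ is $d$-dimensional, so the minimal polynomial $g\in k[x_n]$ of $\bar x_n$ over $k$ equals the one over $K$, namely $\prod_{j=1}^d (x_n-c_j)$, a squarefree polynomial of degree exactly $d$. Therefore $1,\bar x_n,\ldots,\bar x_n^{\,d-1}$ are $k$-linearly independent in $A$, and since $\dim_k A=d$ they form a $k$-basis. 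Consequently $g\in I\cap k[x_n]$ with $\Lt(g)=x_n^{\,d}$, and for each $1\le i\le n-1$ there are unique scalars $a_{i0},\ldots,a_{i,d-1}\in k$ with $\bar x_i=\sum_j a_{ij}\bar x_n^{\,j}$; setting $h_i:=x_i-\sum_j a_{ij}x_n^{\,j}\in I$ yields an element of $I\cap k[x_i,x_n]$ that is linear in $x_i$ and whose leading term is $x_i$, because $x_i>_{\lexF}x_n^{\,m}$ for every $m$.

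Next I would identify $\Lt(I)$. The previous step shows $(x_1,\ldots,x_{n-1},x_n^{\,d})\subseteq\Lt(I)$. By the proposition on standard monomials, the number of monomials outside $\Lt(I)$ is $\dim_k A=d$; but the monomial ideal $(x_1,\ldots,x_{n-1},x_n^{\,d})$ already has exactly the $d$ standard monomials $1,x_n,\ldots,x_n^{\,d-1}$, hence colength $d$. An inclusion of ideals with equal finite colength is an equality (the induced surjection $k[x]/(x_1,\ldots,x_{n-1},x_n^d)\to k[x]/\Lt(I)$ is then an isomorphism), so $\Lt(I)=(x_1,\ldots,x_{n-1},x_n^{\,d})$. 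This monomial ideal has precisely the $n$ minimal generators $x_1,\ldots,x_{n-1},x_n^{\,d}$, so the reduced lexicographic Gröbner basis of $I$ has exactly $n$ elements, one with leading term $x_n^{\,d}$ and one with leading term $x_i$ for each $i\le n-1$. For the element $g_i$ with $\Lt(g_i)=x_i$: its non-leading terms are monomials strictly smaller than $x_i$ and (by reducedness) divisible by none of the other leading terms $x_1,\ldots,\widehat{x_i},\ldots,x_{n-1},x_n^{\,d}$; moreover a monomial $m$ with $m\le_{\lexF}x_i$ and $x_i\mid m$ must equal $x_i$, since $m=x_i m'$ with $m'>1$ would give $m>_{\lexF}x_i$ by compatibility of the order with multiplication. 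Hence every non-leading term of $g_i$ is a pure power of $x_n$ of degree $<d$, so $g_i\in k[x_i,x_n]$ and is linear in $x_i$ (indeed a scalar multiple of the explicit $h_i$). Likewise the element with leading term $x_n^{\,d}$ has all remaining terms pure powers of $x_n$ of degree $<d$, so it is a scalar multiple of $g\in k[x_n]$, of degree $d$.

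I expect the main obstacle to be the ground-field bookkeeping: obtaining $\dim_k A=d$ on the nose — rather than merely $\dim_k A\ge d$ — genuinely uses the radical hypothesis together with the perfect-field convention, and the cleanest route passes through the extended ideal over $K$ via the Nullstellensatz and the Chinese Remainder Theorem; one must also check that the minimal polynomial of $\bar x_n$ over $k$ agrees with its minimal polynomial over $K$. Once $\dim_k A=d$ is in hand, the colength comparison forcing $\Lt(I)=(x_1,\ldots,x_{n-1},x_n^{\,d})$ and the extraction of the reduced basis from its leading terms are routine.
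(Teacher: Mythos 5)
Your proof is correct. The paper itself does not supply an argument for this proposition (it defers to the cited textbooks), so there is no in-text proof to compare against; what you have written is the standard ``shape lemma'' argument, and every step checks out. In particular, the passage to the extended ideal via flatness of $k\hookrightarrow K$, the Nullstellensatz and CRT to get $A\otimes_k K\cong K^d$, the Vandermonde-type observation that distinct $x_n$-values force $\dim_k k[\bar x_n]=d$ and hence $k[\bar x_n]=A$, and the colength comparison pinning down $\Lt(I)=(x_1,\ldots,x_{n-1},x_n^d)$ are all sound; the final reduction using the lex order (any monomial divisible by $x_i$ other than $x_i$ itself exceeds $x_i$) correctly forces the non-leading terms of each reduced basis element to be pure powers of $x_n$ of degree $<d$. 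One small remark: you invoke reducedness of $A$ but never actually use it directly; what you really use is that $IK[x_1,\ldots,x_n]$ is radical (so that the Nullstellensatz gives $IK[x]=\bigcap_j\mathfrak m_{P_j}$), which is exactly where the perfect-field convention enters, as you note.
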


More generally, if $>$ is any global monomial ordering, then we have:

\begin{proposition} Let $I \subset k[x_{1}\ldots,x_{n}]$ be an ideal. Then $V(I) \subset \AA^{n}$ 
is finite iff only finitely many monomials $x^{\alpha}$ are not contained in $\Lt(I)$. In this case, the number 
of points in $V(I)$ is bounded by
$$|V(I)| \le \bigl|\{x^{\alpha} \mid x^{\alpha} \notin \Lt(I)\}\bigr|,$$
with quality holding if the extended ideal $I K[x_{1},\ldots,x_{n}]$  is the vanishing ideal of $V(I)$ as defined below.
\end{proposition}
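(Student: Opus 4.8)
The idea is to restate everything in terms of the $K$-dimension of the coordinate ring and then apply the Nullstellensatz. By the preceding Proposition, the standard monomials $\{x^{\alpha}\mid x^{\alpha}\notin\Lt(I)\}$ form a $k$-basis of $A:=k[x_{1},\ldots,x_{n}]/I$, so their cardinality equals $\dim_{k}A$. Extension of scalars from $k$ to $K$ is flat, hence $A\otimes_{k}K\cong A_{K}:=K[x_{1},\ldots,x_{n}]/IK[x_{1},\ldots,x_{n}]$ and $\dim_{k}A=\dim_{K}A_{K}$; also $V(I)$, being cut out over $K$ by the same polynomials, equals $V(J)$ for $J:=IK[x_{1},\ldots,x_{n}]\subset K[x_{1},\ldots,x_{n}]$. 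So it suffices to show, for this ideal $J$: $(\mathrm{i})$ $V(J)\subset\AA^{n}$ is finite iff $\dim_{K}K[x_{1},\ldots,x_{n}]/J<\infty$, and $(\mathrm{ii})$ in that case $|V(J)|\le\dim_{K}K[x_{1},\ldots,x_{n}]/J$, with equality when $J$ is the vanishing ideal of $V(J)$.

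For the direction of $(\mathrm{i})$ that also gives a crude bound: if $D:=\dim_{K}K[x_{1},\ldots,x_{n}]/J<\infty$, then for each $i$ the $D+1$ classes $1,x_{i},\ldots,x_{i}^{D}$ are $K$-linearly dependent, producing a nonzero univariate $p_{i}(x_{i})\in J$; hence $V(J)\subseteq\bigcap_{i}V(p_{i})$ lies in the finite grid of points whose $i$-th coordinate is a root of $p_{i}$. (Equivalently, some power of each $x_{i}$ lies in $\Lt(J)$, so only finitely many monomials avoid $\Lt(J)$.)

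Conversely, suppose $V(J)=\{P_{1},\ldots,P_{d}\}$ is finite, with maximal ideals $\mathfrak m_{P_{j}}\subset K[x_{1},\ldots,x_{n}]$. The strong Nullstellensatz over the algebraically closed field $K$ gives $\rad(J)=I(V(J))=\mathfrak m_{P_{1}}\cap\cdots\cap\mathfrak m_{P_{d}}$, so by the Chinese Remainder Theorem $K[x_{1},\ldots,x_{n}]/\rad(J)\cong K^{d}$. By Noetherianity $\rad(J)^{N}\subseteq J$ for some $N$, and $K[x_{1},\ldots,x_{n}]/\rad(J)^{N}$ has the finite filtration $K[x_{1},\ldots,x_{n}]/\rad(J)^{N}\supseteq\rad(J)/\rad(J)^{N}\supseteq\cdots\supseteq 0$, whose successive quotients $\rad(J)^{i}/\rad(J)^{i+1}$ are finitely generated modules over $K[x_{1},\ldots,x_{n}]/\rad(J)\cong K^{d}$ and hence finite-dimensional over $K$. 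Thus $\dim_{K}K[x_{1},\ldots,x_{n}]/J\le\dim_{K}K[x_{1},\ldots,x_{n}]/\rad(J)^{N}<\infty$, proving $(\mathrm{i})$.

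For $(\mathrm{ii})$: the inclusion $J\subseteq I(V(J))$ yields a surjection $K[x_{1},\ldots,x_{n}]/J\twoheadrightarrow K[x_{1},\ldots,x_{n}]/I(V(J))$, and evaluation $f\mapsto(f(P_{1}),\ldots,f(P_{d}))$ identifies the target with $K^{d}$ (surjective by interpolation since the $P_{j}$ are distinct, with kernel $I(V(J))$). Hence $\dim_{K}K[x_{1},\ldots,x_{n}]/J\ge d=|V(J)|$, with equality precisely when this surjection is an isomorphism, i.e.\ $J=I(V(J))$; in particular when $J=IK[x_{1},\ldots,x_{n}]$ is the vanishing ideal of $V(I)$, as claimed. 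The crux is the converse half of $(\mathrm{i})$: deducing $\dim_{K}K[x_{1},\ldots,x_{n}]/J<\infty$ from finiteness of $V(J)$ when $J$ need not be radical, which forces the passage through $\rad(J)$ via the Nullstellensatz and control of the Artinian quotient $K[x_{1},\ldots,x_{n}]/\rad(J)^{N}$; everything else is linear algebra together with flat base change and the quoted Proposition.
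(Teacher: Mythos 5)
Your proof is correct. The paper itself states this proposition without proof (deferring, as announced in the introduction, to the standard textbook references for basic facts); your argument --- reducing to $\dim_K K[x_1,\ldots,x_n]/IK[x_1,\ldots,x_n]<\infty$ via the standard-monomial basis and flat base change, then using univariate polynomials from linear dependence in one direction and the Nullstellensatz plus the Artinian filtration $\rad(J)^i/\rad(J)^{i+1}$ in the other, and finishing with the CRT/interpolation surjection onto $K^d$ --- is the standard textbook proof and is complete and correct.
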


\noindent
If $A\subset \AA^{n}$ is any subset, its \emph{vanishing ideal} is the ideal
$$\II(A) = \{ f \in K[x_{1},\ldots,x_{n}] \mid f(a)=0 \, \text{ for all } a \in A\}.$$
The residue ring $K[A]=K[x_{1},\ldots,x_{n}]/\II(A)$ can be identified with a $K$-subalgebra of the
$K$-algebra $K^{A} = \{ f \colon A \to K \}$ of all $K$-valued functions on $A$, namely the subalgebra which is generated by
the restrictions $x_{i}|_{A}$ of the coordinate functions $x_{i}$ to $A$. It is therefore called the \emph{coordinate ring} of $A$.

Note that vanishing ideals are radical ideals. 

\begin{theorem}[Hilbert's Strong Nullstellensatz] Let $I\subset K[x_{1},\ldots,x_{n}]$ be an ideal. Then
$$\II(V(I))=\rad(I).$$
\end{theorem}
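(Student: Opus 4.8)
\section*{Proof proposal}

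The plan is to prove the two inclusions separately. The inclusion $\rad(I) \subseteq \II(V(I))$ is immediate from the definitions: if $f \in \rad(I)$, then $f^{N} \in I$ for some $N \ge 0$, so $f^{N}$ vanishes on every point of $V(I)$; since $K$ is a field (an integral domain would suffice), $f(a)^{N}=0$ forces $f(a)=0$ for all $a \in V(I)$, that is, $f \in \II(V(I))$. No deeper input is needed here.

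For the reverse inclusion $\II(V(I)) \subseteq \rad(I)$ I would use the Rabinowitsch trick, which reduces the strong Nullstellensatz to the weak one already at our disposal. Since $K[x_{1},\ldots,x_{n}]$ is Noetherian by Hilbert's basis theorem, write $I=(f_{1},\ldots,f_{r})$, and let $f \in \II(V(I))$. Introduce one extra variable $t$ and consider
$$J := (f_{1},\ldots,f_{r},\, 1-tf) \subset K[x_{1},\ldots,x_{n},t].$$
Then $V(J)=\emptyset$ in $\AA^{n+1}$: a common zero $(a_{1},\ldots,a_{n},b)$ of $J$ would have $(a_{1},\ldots,a_{n}) \in V(I)$, hence $f(a_{1},\ldots,a_{n})=0$ because $f \in \II(V(I))$, which contradicts $1-b\,f(a_{1},\ldots,a_{n})=0$. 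By Hilbert's Nullstellensatz in its weak form, applied over the algebraically closed field $K$, we get $1 \in J$, so there exist $g_{0},g_{1},\ldots,g_{r} \in K[x_{1},\ldots,x_{n},t]$ with
$$1 = \sum_{i=1}^{r} g_{i}(x,t)\,f_{i}(x) + g_{0}(x,t)\,(1-tf(x)).$$

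The final step is to eliminate $t$. Passing to the localization $K[x_{1},\ldots,x_{n}]_{f}$ (equivalently, substituting $t=1/f$ inside the fraction field $K(x_{1},\ldots,x_{n})$) kills the term containing $1-tf$ and leaves $1 = \sum_{i} g_{i}(x,1/f)\,f_{i}(x)$. Multiplying through by $f^{N}$, where $N$ is at least the largest $t$-degree occurring among $g_{1},\ldots,g_{r}$, clears all denominators and produces an identity $f^{N} = \sum_{i=1}^{r} h_{i}(x)\,f_{i}(x)$ with $h_{i} \in K[x_{1},\ldots,x_{n}]$; hence $f^{N} \in I$ and $f \in \rad(I)$. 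The only genuinely substantive ingredient is the weak Nullstellensatz; the trick itself is short, and the one point that needs care is carrying out the substitution $t \mapsto 1/f$ rigorously (in the localization, or in the fraction field) and bookkeeping the power of $f$ required to get back into the polynomial ring.
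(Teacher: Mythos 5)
Your proof is correct. The paper itself does not supply a proof of this theorem: it states it and, as declared in the introduction, defers all standard proofs to the textbooks \cite{GP08, cox2015}. The argument you give — the trivial inclusion $\rad(I)\subseteq\II(V(I))$ via the integral-domain property of $K$, followed by the Rabinowitsch trick reducing the strong form to the weak Nullstellensatz already stated in the paper, with the substitution $t\mapsto 1/f$ carried out carefully in the localization $K[x_1,\dots,x_n]_f$ and denominators cleared by a suitable power of $f$ — is precisely the standard proof one finds in those references, so there is no divergence to report.
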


A subset $A \subset  \AA^{n}$ is called an \emph{algebraic subset}, or simply an \emph{algebraic set}, if $A=V(I)$ for some ideal $I \subset K[x_{1},\ldots,x_{n}]$.
If $I$ is generated by polynomials $f_1,\dots, f_r \in k[x_{1},\ldots,x_{n}]$, then $A = V(f_1,\dots, f_r )$, and we call $k$ a \emph{field of definition} of $A$. 
Note that the algebraic subsets of $\AA^{n}$ form the closed sets of a topology on $\AA^{n}$. This topology is called the \emph{Zariski topology} on $\AA^{n}$.
An algebraic set $A$ is called \emph{irreducible} if
$$A=A_{1}\cup A_{2} \;\Longrightarrow\; A=A_{1} \hbox{ or } A=A_{2}$$
holds for every pair of nonempty algebraic subsets $A_{1},A_{2}$ of $\AA^{n}$. Note that $A$ is irreducible iff $\II(A)$ is a prime ideal
iff $K[A]$ is an integral domain. An \emph{affine variety} is an irreducible algebraic subset of some $\AA^{n}$. 

Every algebraic set $A$ is a finite union of irreducible algebraic sets $C_{i}$. If
$$A = C_{1}\cup \ldots \cup C_{r} \hbox{ with } C_{i} \not \subset C_{j}  \hbox{ for } i \not= j,$$
then each $C_{i}$ is called a \emph{component} of $A$.

\begin{example} \phantom{AAA}\\ 
\noindent
\begin{minipage}{6.5cm}
Consider the monomial ideal 
$$(xy,yz)\subset \QQ[x,y,z].$$
Its vanishing locus has two components:
$$V(xy,yz)=V(y)\cup V(x,z) \subset \AA^{3}.$$ 
Indeed, both $V(y)$ and $V(x,z)$ are irreducible since $K[x,y,z]/(y)\cong K[x,z]$ and $K[x,y,z]/(x,z)\cong K[y]$
are integral domains.
\end{minipage}  \hspace{1cm}
\begin{minipage}{4cm} \includegraphics[scale=0.18]{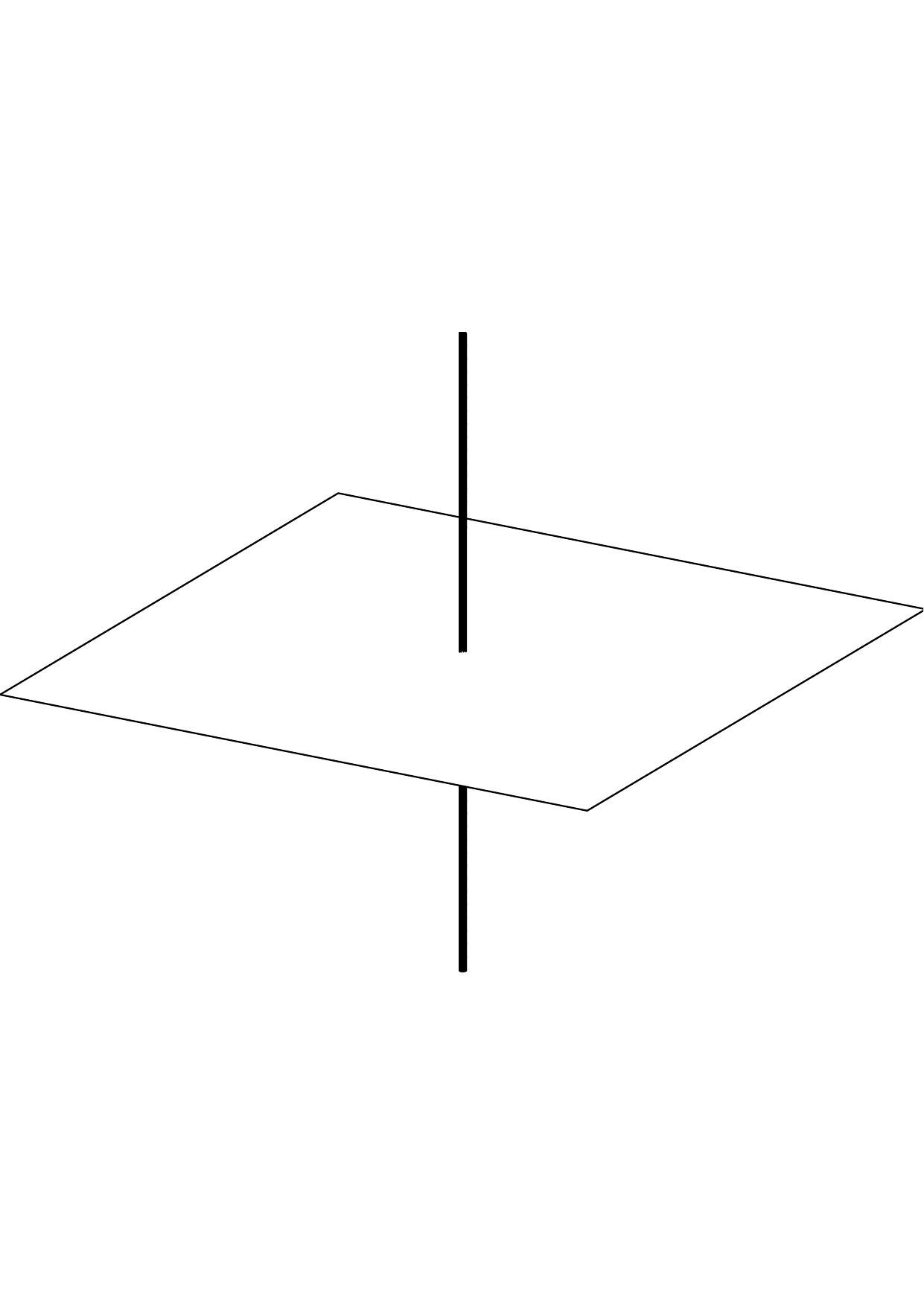}\end{minipage}
 \end{example} 

\begin{example}\label{ex:primdec}   On the algebraic side, decomposing an algebraic set is related to
the concept of primary decomposition. Here is \OSCAR code illustrating this:
\inputminted{jlcon}{ex-primdec.jlcon} 
\noindent
Note that there is a single primary component over $\QQ$. Furthermore, there is one corresponding class of conjugated absolute associated primes.
This is defined over a number field of degee two and is represented by \mintinline{jl}{L[1][3]}.
\end{example} 

\begin{theorem} 
The correspondences $V$ and $\II$ induce bijections
$$
\begin{matrix}
\{ \hbox{radical ideals of } K[x_{1},\ldots,x_{n}] \}  & \longleftrightarrow&\{\hbox{algebraic subsets of }\AA^{n} \}, \cr
 \cup & &\cup \cr
 \{ \hbox{prime ideals of } K[x_{1},\ldots,x_{n}] \} &  \longleftrightarrow&\{\hbox{irreducible alg. subsets of }\AA^{n} \},  \cr
 \cup && \cup \cr
 \{ \hbox{maximal ideals of } K[x_{1},\ldots,x_{n}] \}  & \longleftrightarrow& \{\hbox{points of }\AA^{n} \}.  \cr
 \end{matrix}
  $$
 \end{theorem}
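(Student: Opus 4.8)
The plan is to prove that $V$ and $\II$ are mutually inverse, inclusion-reversing bijections between the radical ideals of $K[x_{1},\ldots,x_{n}]$ and the algebraic subsets of $\AA^{n}$, and then to verify that this bijection restricts to the two indicated sub-correspondences, compatibly with the vertical inclusions.

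First I would record the formal facts, valid for every ideal $I\subseteq K[x_{1},\ldots,x_{n}]$ and every subset $A\subseteq\AA^{n}$: both $V$ and $\II$ reverse inclusions; one always has $I\subseteq\II(V(I))$ and $A\subseteq V(\II(A))$; and if $A=V(J)$ is algebraic, then $J\subseteq\II(A)$, so applying $V$ gives $V(\II(A))\subseteq V(J)=A$, hence $V(\II(A))=A$ for every algebraic set $A$. In the other direction, Hilbert's Strong Nullstellensatz gives $\II(V(I))=\rad(I)$, which equals $I$ exactly when $I$ is radical. Since every vanishing ideal is radical and every set of the form $V(I)$ is by definition algebraic, these two identities say precisely that $V$ and $\II$ are inverse bijections between the radical ideals and the algebraic subsets; this is the top row.

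Next I would cut this bijection down to the lower two rows. For the middle row I would invoke the fact recorded earlier in the text, that $A$ is irreducible if and only if $\II(A)$ is prime; since prime ideals are radical, it remains to check that $V(P)$ is irreducible whenever $P$ is prime. This is the standard argument: if $V(P)=A_{1}\cup A_{2}$ with the $A_{i}$ algebraic, then $P=\II(V(P))=\II(A_{1})\cap\II(A_{2})\supseteq\II(A_{1})\,\II(A_{2})$, so primality forces $P\supseteq\II(A_{i})$ for some $i$; combined with $A_{i}\subseteq V(P)$, which gives $\II(A_{i})\supseteq\II(V(P))=P$, this yields $P=\II(A_{i})$ and hence $V(P)=A_{i}$. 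For the bottom row, I would show that $\II(\{a\})$ for a point $a=(a_{1},\ldots,a_{n})$ equals the ideal $\mathfrak{m}_{a}=(x_{1}-a_{1},\ldots,x_{n}-a_{n})$ — one inclusion is obvious, and the other follows by writing $f(x)=f(a)+\sum_{i}(x_{i}-a_{i})g_{i}(x)$ — and that $\mathfrak{m}_{a}$ is maximal, since the evaluation homomorphism identifies $K[x_{1},\ldots,x_{n}]/\mathfrak{m}_{a}$ with the field $K$. Conversely, given a maximal ideal $\mathfrak{m}$, it is proper, so $1\notin\mathfrak{m}$; as $K[x_{1},\ldots,x_{n}]$ is Noetherian, $\mathfrak{m}$ is finitely generated and Hilbert's Nullstellensatz produces a point $a\in V(\mathfrak{m})$; then $\mathfrak{m}\subseteq\II(\{a\})=\mathfrak{m}_{a}\subsetneq(1)$ forces $\mathfrak{m}=\mathfrak{m}_{a}$ by maximality, so $V(\mathfrak{m})=\{a\}$. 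Thus the bijection of the top row restricts to bijections prime $\leftrightarrow$ irreducible and maximal $\leftrightarrow$ point, and since the vertical inclusions on both sides are exactly the inclusions of these sub-collections, the displayed diagram commutes.

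The only inputs doing genuine work here are the Strong Nullstellensatz (for the top row) and its weak form (to produce a point of $V(\mathfrak{m})$ in the bottom row); everything else is a formal game with the operators $V$ and $\II$. Consequently the real obstacle — were these theorems not already available — would be the inclusion $\II(V(I))\subseteq\rad(I)$; granting it, the argument above is routine bookkeeping.
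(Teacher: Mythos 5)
The paper states this theorem without proof, deferring (as it does throughout for standard facts) to the textbook references \cite{GP08, cox2015}. Your argument is the standard one found there and is correct: the top row combines the formal identity $V(\II(A))=A$ for algebraic $A$ with the Strong Nullstellensatz $\II(V(I))=\rad(I)$; the middle row uses the equivalence ``$A$ irreducible iff $\II(A)$ prime'' already recorded in the text (your direct verification that $V(P)$ is irreducible for prime $P$ is in fact a re-proof of one direction of that equivalence and is therefore redundant, though harmless); and the bottom row correctly identifies $\II(\{a\})=\mathfrak m_a$, checks maximality via the evaluation map, and uses the weak Nullstellensatz plus Noetherianity to show every maximal ideal arises this way. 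One small stylistic remark: since the bijection between radical ideals and algebraic sets is inclusion-reversing, it carries $\cup$ on one side to $\cup$ on the other automatically once you know each sub-collection maps into the corresponding one, so the closing sentence about the diagram commuting needs no separate verification beyond what you already gave.
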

 
If a system of algebraic equations  has infinitely many solutions, we might ask for the dimension of the solution set.
Equivalently, if $A\subset \AA^{n}$ is this set, we might ask for the Krull dimension of $K[A]$. Note that $A$ is finite 
iff its dimension is zero. We say that $A$ is \emph{equidimensional} of dimension $d$ if every component of $A$ has 
the same dimension $d$. If $A$ is equidimensional of dimension one (resp. two), we speak of a \emph{curve} (resp. 
\emph{surface}).
 
 \begin{theorem}[Gröbner Basis Criterion for Dimension]\label{GBDimensionCriterion}
 Let an ideal $I=(f_{1},\ldots,f_{r}) \subset k[x_{1},\ldots,x_{n}]$ be given.
 If $\rad(\Lt_>(I))=(x_{1},\ldots,x_{c})$ for some $c$, then $V(I)$ has dimension $n-c$ and the projection 
 $$\AA^{n} \to \AA^{n-c}, (a_{1},\ldots,a_{n}) \mapsto (a_{c+1},\ldots, a_{n}),$$
restricts to a finite  surjective map $V(I) \to \AA^{n-c}$.
If, moreover, $\Lt_>(I)$ is ge\-nerated by monomials in the subring $k[x_{1},\ldots,x_{c}]$, then $I$ is 
\emph{unmixed}\footnote{That is, all associated primes of $I$ have Krull dimension $n-c$.} of dimension $n-c$,
and the residue ring $k[x_{1},\ldots,x_{n}]/I$ is Cohen--Macaulay (see \cite{bruns1998cohen} for Cohen--Macaulay rings). In particular, 
$A$ is equidimensional of dimension $n-c$.
 \end{theorem}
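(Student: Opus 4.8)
The plan is to treat the two assertions in turn, the common theme being that replacing $I$ by its leading ideal $\Lt_>(I)$ converts the problem into combinatorics while preserving the invariants in question, and that — as stressed for Buchberger's algorithm — leading ideals are stable under the field extension $k\subset K$ (a reduced Gröbner basis of $I$ over $k$ is still a Gröbner basis of $IK[x_1,\ldots,x_n]$ over $K$), so one may argue module-theoretically over $k$ and base-change at the end.

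For the first assertion I would set $A:=k[x_{c+1},\ldots,x_n]$ and use the standard-monomial basis of $k[x_1,\ldots,x_n]/I$ from the Proposition above. Since $x_i\in\rad(\Lt_>(I))$ for $i\le c$, some power $x_i^{N_i}$ lies in $\Lt_>(I)$, so every standard monomial has $x_i$-exponent $<N_i$; hence the finitely many monomials $x_1^{a_1}\cdots x_c^{a_c}$ with $a_i<N_i$ span $k[x_1,\ldots,x_n]/I$ as an $A$-module, i.e.\ $A\to k[x_1,\ldots,x_n]/I$ is module-finite. Conversely $\Lt_>(I)\subseteq(x_1,\ldots,x_c)$ contains no monomial of $A$, so $\Lt(f)\notin A$ whenever $0\ne f\in I$; thus $I\cap A=0$ and the map is injective. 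In particular $\dim k[x_1,\ldots,x_n]/I=\dim A=n-c$. Extending scalars to $K$ and passing to the coordinate ring $K[x_1,\ldots,x_n]/\II(V(I))$ keeps the map to $K[x_{c+1},\ldots,x_n]$ module-finite and injective, and lying-over together with the fact that a finite morphism is closed with finite fibres shows that the coordinate projection restricts to a finite surjective map $V(I)\to\AA^{n-c}$, with $\dim V(I)=n-c$.

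For the second assertion the extra hypothesis says $\Lt_>(I)=J\,k[x_1,\ldots,x_n]$ for a monomial ideal $J\subseteq k[x_1,\ldots,x_c]$, and together with $\rad(\Lt_>(I))=(x_1,\ldots,x_c)$ this forces $\rad J=(x_1,\ldots,x_c)$ inside $k[x_1,\ldots,x_c]$, so $k[x_1,\ldots,x_c]/J$ is Artinian and only finitely many monomials $x^\beta\in k[x_1,\ldots,x_c]$ satisfy $x^\beta\notin J$. The key step is to show that the residue classes of these $x^\beta$ form an $A$-basis of $R:=k[x_1,\ldots,x_n]/I$. Spanning follows because every standard monomial factors as $x^\beta\cdot m$ with $m$ a monomial in $x_{c+1},\ldots,x_n$ and, since $\Lt_>(I)=Jk[x_1,\ldots,x_n]$, with $x^\beta\notin J$. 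Linear independence follows because if $0\ne p=\sum_\beta f_\beta x^\beta\in I$ with $f_\beta\in A$, then every monomial of $p$ has its $x_1,\ldots,x_c$-part equal to some $x^\beta\notin J$, hence lies outside $Jk[x_1,\ldots,x_n]=\Lt_>(I)$; in particular $\Lt(p)\notin\Lt_>(I)$, a contradiction. Hence $R$ is a finite free — in particular flat — $A$-module; since $A$ is regular and the fibres of $A\to R$ are Artinian, hence Cohen--Macaulay, the ring $R=k[x_1,\ldots,x_n]/I$ is Cohen--Macaulay (alternatively one could degenerate $I$ to $\Lt_>(I)$ and invoke openness of the Cohen--Macaulay locus, the special fibre $(k[x_1,\ldots,x_c]/J)[x_{c+1},\ldots,x_n]$ being Cohen--Macaulay). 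Finally, flatness of $A\to R$ gives going-down, so every minimal prime of $R$ contracts to $(0)$ in the domain $A$, whence the corresponding quotient domain is finite over $A$ and has dimension $n-c$; since a Cohen--Macaulay ring has no embedded primes, all associated primes of $I$ have dimension $n-c$, i.e.\ $I$ is unmixed of dimension $n-c$, and the same free-module statement over $K$ shows $V(I)$ is equidimensional of dimension $n-c$.

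The routine ingredients — the standard-monomial basis, stability of leading ideals under $k\subset K$, lying-over and going-down, and the facts that a flat algebra with Cohen--Macaulay base and fibres is Cohen--Macaulay and that Cohen--Macaulay rings are unmixed — I would simply cite. The one step that must be gotten right, and the heart of the argument, is the identification of $R$ as a free module over the polynomial subring $A$; its linear-independence half is exactly where the hypothesis that $\Lt_>(I)$ is generated by monomials in $k[x_1,\ldots,x_c]$ is used, and without it $R$ is only finite, not free, over $A$ and need not be Cohen--Macaulay.
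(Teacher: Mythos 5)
The paper states this result without proof, deferring to the textbooks [GP08, cox2015]; your argument is correct and is essentially the standard proof found there, reducing everything to the structure of $R=k[x_1,\ldots,x_n]/I$ as a module over $k[x_{c+1},\ldots,x_n]$ via the standard-monomial basis, together with the fact (which you rightly emphasize) that this structure is unchanged under the base change $k\subset K$ because a Gröbner basis over $k$ remains one over $K$. You correctly pinpoint the upgrade from the first hypothesis to the second: $\rad(\Lt_>(I))=(x_1,\ldots,x_c)$ gives a module-finite injection of $k[x_{c+1},\ldots,x_n]$ into $R$ (hence the dimension and, after extending scalars, the finite surjective projection), while requiring $\Lt_>(I)$ to be generated by monomials in $k[x_1,\ldots,x_c]$ makes the standard monomials a \emph{free} basis over $k[x_{c+1},\ldots,x_n]$, which is exactly what yields Cohen--Macaulayness and, via going-down for the flat extension, unmixedness and equidimensionality.

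Two minor points. First, the $A$ at the end of the theorem statement denotes the algebraic set $V(I)$ (introduced in the paragraph preceding the theorem), not a ring; your use of $A$ for the polynomial subring $k[x_{c+1},\ldots,x_n]$ is a harmless clash, but worth flagging. Second, when you pass from $IK[x_1,\ldots,x_n]$ to the vanishing ideal $\II(V(I))=\rad(IK[x_1,\ldots,x_n])$ in the first half, the injectivity of $K[x_{c+1},\ldots,x_n]\to K[V(I)]$ still needs a one-line check: if $0\ne f\in K[x_{c+1},\ldots,x_n]$ and $f^N\in IK[x_1,\ldots,x_n]$, then $\Lt(f)^N=\Lt(f^N)\in \Lt(IK[x_1,\ldots,x_n])\subseteq(x_1,\ldots,x_c)K[x_1,\ldots,x_n]$, which is impossible for a monomial in $x_{c+1},\ldots,x_n$. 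You leave this implicit; it should be stated, since module-finiteness alone (which does pass to the quotient automatically) does not give injectivity.
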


\begin{example}\label{ex:dimTC} Consider the twisted cubic curve $C=V(x_{2}-x_{1}^{2},x_{3}-x_{1}x_{2}) \subset \AA^{3}$:
\inputminted[firstline=1,lastline=8]{jlcon}{ex-dim.jlcon} 
\end{example}

\noindent
If $k$ is infinite, and $>$ is $>_{\degrevlex}$, then the assumption $$\rad(\Lt_>(I))=(x_{1},\ldots,x_{c}) \hbox{ for some $c$}$$ is satisfied  after a 
general linear change of coordinates. However, it is typically not a good idea to apply a change of coordinates as this may destroy any kind of 
sparseness of the given data. A more promising approach here is to use the equality $\dim V(I)= \dim V(\Lt(I))$ in order to reduce the computation 
of dimension to the purely combinatorial case of monomial ideals. The \OSCAR function \mintinline{jl}{dim} is based on this idea.
In Example \ref{ex:dimTC}:
\inputminted[firstline=10, lastline=11]{jlcon}{ex-dim.jlcon} 
 
To study a given variety of dimension $\geq 1$, one may wish to parametrize the variety, for example by rational functions.
 
 \begin{example}\label{ex circ}
 The circle $x^{2}+y^{2}=1$ can be parametrized by rational functions. To find these functions, we intersect the circle ${\color{blue} x^{2}+y^{2}=1}$ 
with the one-parameter family of lines ${\color{red}y=tx+1} $ through $(0,1)$, obtaining one fixed and one variable intersection point. See Figure~\ref{fig circ}.

\begin{figure}[h]\center\includegraphics[scale=0.20,angle=0]{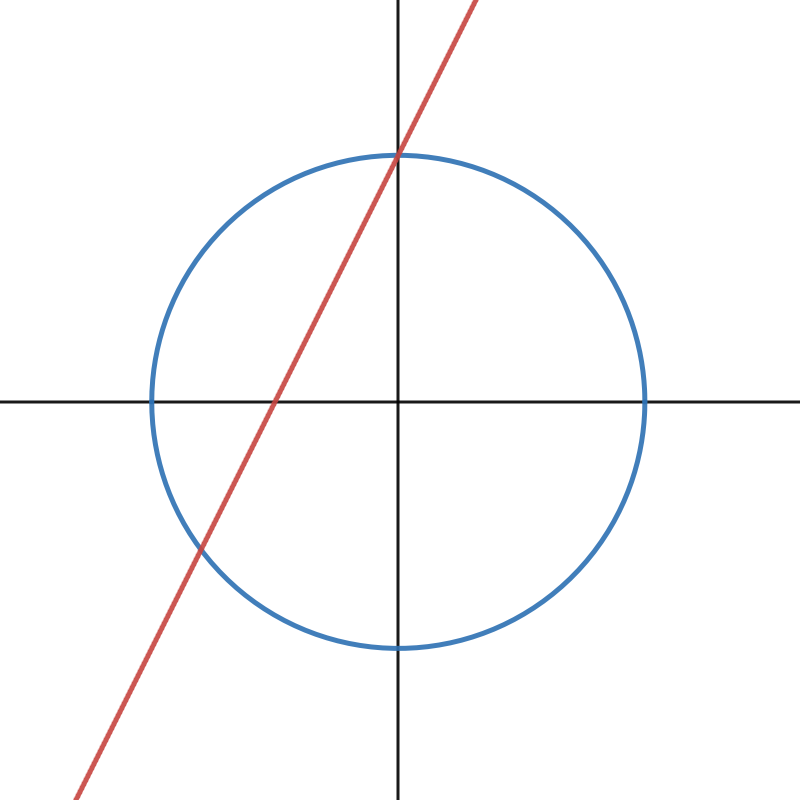}
  \caption{Parametrization of a circle.}
  \label{fig circ}
\end{figure}

\noindent
The following computation, which eliminates $y$, shows that either $x=0$ (the $x$-coordinate of the fixed point) or $x=\frac{-2t}{1+t^{2}}$ (the $x$-coordinate of the variable point).

\inputminted[firstline=1,lastline=13]{jlcon}{circlepar.jlcon}
\noindent
Eliminating $x$, we observe that either $y=1$ or $y=\frac{1-t^{2}}{1+t^{2}}$.
\inputminted[firstline=15,lastline=25]{jlcon}{circlepar.jlcon}

\end{example}

\begin{remark}
In general, rational parametrizations do only rarely exist. The question of whether a variety of dimension 1 or 2 is rationally parametrizable is 
well understood, and for the affirmative case, we have algorithms to compute such a parametrization. For varieties of dimension $\ge 3$, 
this question is at the frontier of current research in algebraic geometry.
\end{remark}

The answers given in dimension 1  and 2 can be better  understood in the setting of projective algebraic geometry. Before we come to this,
we include a few words on local studies.

\section{Local studies}\label{sect:local-studies}

The local point of view will, for example, be taken when discussing the intersection multiplicity of two plane curves at a point. 
In general, for local studies, we may assume that the given point is the origin $o = (0,\dots, 0)$ of $\AA^n$. The case of an arbitrary 
point $p\in \AA^n$ can be dealt with by translating $p$ to $o$. This requires that we extend $k$ by adjoining each coordinate 
of $p$ not contained in $k$.

Algebraically, local studies lead us to enlarge $k[x_1, \dots, x_n]$ by considering the ring 
extension $k[x_1, \dots, x_n]\hookrightarrow \mathcal O_o$, where $\mathcal O_o$ is the localization of $k[x_1, \dots, x_n]$
at the maximal ideal corresponding to $o$. That is, $\mathcal O_o$ is the ring of fractions
$$
\mathcal O_o:=\left\{\frac{g}{h} \;\Bigg|\; g,h \in k[x_1, \dots, x_n], h(o)\neq 0\right\}.
$$
Here, $g/h$ stands for the equivalence class under the 
equivalence relation given by $(g,h)\sim(g', h')\Longleftrightarrow gh'=hg'$.

We now focus on plane curves.

\begin{definition}\label{def:int-mult}
Let $f, g\in k[x,y]$ be non-constant square-free polynomials without a common factor. Let $C, D\subset\AA^2$ be the plane 
curves defined by $f,g$, and let $p\in\AA^2$ be a point. Then the {\emph{intersection multiplicity}} of $C$ and $D$ at $p$ is 
defined as follows. If $p=o$, set
$$
i(C,D; o)= \dim_{k}(\mathcal O_o/ (f,g) \mathcal O_o).
$$
If $p$ is different from $o$, translate $p$ to $o$, extending $k$ if needed, and use the formula above over the extended field.
\end{definition}

\noindent
Intersection multiplicities can be found using Mora division with remainder 
to compute a Gröbner basis with respect to a local monomial ordering as implicitly done in the example below.
 
\begin{example}\label{ex:twocusps} We compute the intersection multiplicity of the tangential cusps 
$C=V(y^2-x^3)$ and $D=V(2y^2-x^3)$ at the origin: 
\inputminted[firstline=1,lastline=16]{jlcon}{twocusps.jlcon}
\noindent
Alternatively, we may use the built-in function \mintinline{jl}{intersection_multiplicity}:
\inputminted[firstline=17,lastline=30]{jlcon}{twocusps.jlcon}
\end{example}

Our next definition introduces a local invariant which is 0 at every smooth point of a plane curve, and
measures the deviation of a singular point  from being a smooth point, otherwise. For this, we encourage
the reader to recall the concept of normalization of rings. Note that \OSCAR provides a number of
highly efficient algorithms for computing normalization. See \cite{GLS2010, BDLPSS2013, BDLP2022}. 
\begin{definition}
Let $f\in k[x,y]$ be non-constant and square-free, let $C = V(f)\subset \AA^2$, and let $p\in C$. The 
\emph{delta invariant} \( \delta_p(C) \) of $C$ at  \( p \) is defined as follows. If $p=o$, set
\[
\delta_o(C) = \dim_k \left( \overline{A_o} / A_o \right),
\]
where $A_o = \mathcal O_o/ f\mathcal O_o$ and $\overline{A_o}$ is the normalization of $A_o$. 
If $p$ is different from $o$, proceed as in Definition \ref{def:int-mult}.
\end{definition}

The \emph{total delta invariant} $\delta(C) = \sum_{p\in C }\delta_p(C)$ of an affine plane curve can be computed from 
the normalization of its coordinate ring as shown below:
\begin{example}\label{ex:quintic-curve}\phantom{AAA}
\inputminted{jlcon}{delta.jlcon}  %
\end{example}\label{ex:delta}
\noindent
We will come back to this example in the next section.

Different types of singularities of plane curves can often be distinguished by considering the tangent lines at these 
points. To define tangent lines to plane curves at singular points, note that each non-constant homogeneous  polynomial $f\in k[x,y]$
decomposes over $K$ into linear factors.

\begin{definition}
Let $C = V(f)\subset \AA^2$ be as above. Let
$
f=f_0 + f_1+f_2+\ldots+f_d
$
be the Taylor expansion of $f$ at $o$, where, for each $i$,  the polynomial $f_i$ collects the degree-$i$ terms of $f$.
The \emph{multiplicity $\mult(C,o)$ of $C$ at $o$} is defined to be the least $m$ such that $f_m\neq 0$. The 
\emph{tangent lines} to $C$ at $o$ are the lines defined by the linear factors of $f_{\mult(f,o)}$ over $K$.
We say that $o$ is an \emph{ordinary multiple point} of \( C \) if ${\mult(C,o)}\geq 2$ and all tangent lines at $o$
are distinct. For an arbitrary $p\in \AA^2$,  the definitions carry over as in Definition \ref{def:int-mult}.
\end{definition}

Note that the delta invariant and the multiplicity $m$ at an ordinary multiple point $p$ of $C$ are related as follows:
\( \delta_p(C) = \frac{m(m - 1)}{2} \). 

\section{Projective Algebraic Geometry}
Typically, two different lines in $\AA^{2}$ intersect in precisely one point. This is not true, however, for parallel lines. To treat pairs of lines on equal
footing, one has to add points at infinity, one for each class of parallel lines. This idea goes back to Renaissance artists who considered such points to introduce
perspective in their paintings. In any dimension $n$, the \emph{projective $n$-space} over $K$ is obtained as the union\footnote{In case $K=\CC$, 
the projective space $\PP^{n}$ carries the structure of a complex manifold which is a compactification of $\AA^{n}$.} 
$$\PP^{n}=\AA^{n}\cup H$$ 
of $\AA^{n}$ with a \emph{hyperplane at infinity} $H\cong \PP^{n-1}$.
To treat points of $\PP^{n}$ on equal footing, the formal definition of $\PP^{n}$ introduces \emph{homogeneous coordinates}. The \emph{homogeneous
coordinate ring} of $\PP^{n}$ is the graded polynomial ring $S=K[x_{0},\ldots,x_{n}]$, where each variable $x_{i}$ has degree $1$. 
\emph{Projective algebraic subsets} of $\PP^{n}$ are defined by homogeneous equations and correspond, thus, to homogeneous ideals $I\subset S$.
In this way, we get a projective geometry--algebra dictionary. In particular,
if $A\subset \PP^{n}$ is a projective algebraic set, we can speak of its
\emph{homogeneous vanishing ideal} $\II(A)\subset S$ and its \emph{homogeneous coordinate ring} $K[A]:=S/\II(A)$.
Moreover, the formal definition of $\PP^{n}$ gives rise to many ways of writing $\PP^{n}$ 
as the union of an “affine chart” $\AA^{n}$ and a hyperplane at infinity. Local concepts can, then, be extended from the affine to the projective case 
by considering a covering of $\PP^{n}$ by affine charts. For example,
$$
\PP^{n} = \bigcup_{i=0}^{n} U_i,
$$
with \emph{coordinate charts} $\AA^{n} \cong U_i:=\{((a_0:\dots : a_n) \in \PP^{n} \mid a_i\neq 0\}$ and hyperplanes at infinity $H_i =V(x_i)\cong \PP^{n-1}$.
A local question can then be studied by considering the question in an appropriate chart. For example,  the notion of intersection multiplicity 
of two plane curves at a point as introduced in the previous section carries over, independently of the choice of chart in which the point under consideration lies.
The \emph{projective closure} $\overline{A} \subset \PP^{n}$ of an algebraic set $A=V(I)\subset \AA^{n}\cong U_0$ is realized by homogenizing the elements of a 
Gröbner basis of $I$ with respect to a degree refining monomial ordering. Note that if $A$ is defined over $k$, then $\overline{A}$ has $k$ as a 
field of definition as well since Gröbner basis computations do not extend the ground field.
 
 \begin{example} Consider the twisted cubic curve $$C=V(x_{2}-x_{1}^{2},x_{3}-x_{1}x_{2}) \subset \AA^{3}.$$ Its projective closure 
$\Cbar$ is realized as follows:%
 \inputminted{jlcon}{ex21.jlcon}  %
 \noindent Note that it is not enough to homogenize the original two generators:
 \inputminted{jlcon}{ex21a.jlcon}  %
 \noindent %
 This shows that $V(J_1)\subset \PP^2$ contains the additional line $L=V(x_{0},x_{1})$ which is completely contained in the hyperplane  $H=V(x_{0})$ at infinity. Moreover:
  \inputminted{jlcon}{ex21b.jlcon}  %
 \noindent Thus, the projective closure of $C$ intersects $H$ in the single point
 $$\Cbar \cap H =\{(0:0:0:1)\}$$ 
 which is the limit of the points $(1:t:t^{2}:t^{3})=(\frac{1}{t^{3}}:\frac{1}{t^{2}}:\frac{1}{t}:1)$ for $t\to \infty$.
 \end{example}
  
In $\PP^2$, we have the line at infinity. Any other line in $\PP^2$  is a line $L\subset \AA^2$  together with the 
common point at infinity of all lines parallel to L. Now, two different lines meet in precisely one point. In fact, 
much more is true:

\begin{theorem}[B\'ezout] Let $C$ and $D$ be projective plane curves of degrees $c$ and $d$, respectively. Suppose that the 
two curves do not have a common component. Then, counted with multiplicity, $C$ and $D$ intersect in precisely $c\cdot d$ points:
 $$ \sum_{p \in C\cap D} i(C,D;p) = c\cdot d.$$
 \end{theorem}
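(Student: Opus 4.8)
The plan is to prove Bézout's theorem by reducing to a one-dimensional intersection-theoretic statement via the resultant, and then interpreting the multiplicities of the resultant's roots as the local intersection multiplicities $i(C,D;p)$ defined earlier. First I would choose homogeneous coordinates $x_0,x_1,x_2$ on $\PP^2$ so that the point $(0:0:1)$ lies on neither $C$ nor $D$ and lies on no line through two distinct points of the finite set $C\cap D$; this is possible since $C\cap D$ is finite (the curves share no component, so $V(F,G)$ has dimension $0$ by the projective analogue of Proposition~\ref{prop elim} and the dimension results of Theorem~\ref{GBDimensionCriterion}). Writing $F,G\in K[x_0,x_1,x_2]$ for the defining forms of degrees $c,d$, regard them as polynomials in $x_2$ with coefficients in $K[x_0,x_1]$; the choice of coordinates guarantees that the leading coefficients in $x_2$ are nonzero constants, so the resultant $R=\operatorname{Res}_{x_2}(F,G)\in K[x_0,x_1]$ is a homogeneous polynomial of degree exactly $cd$ (this is the classical degree formula for resultants of forms), and it is not identically zero because $F,G$ have no common factor.

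Next I would use the fundamental theorem of algebra to factor $R$ as a product of $cd$ linear forms in $x_0,x_1$ (counted with multiplicity). Each linear factor corresponds to a point $(\alpha_0:\alpha_1)\in\PP^1$, and the vanishing of the resultant at that point means the specialized polynomials $F(\alpha_0,\alpha_1,x_2)$ and $G(\alpha_0,\alpha_1,x_2)$ have a common root $x_2=\gamma$; by the choice of $(0:0:1)$ there is exactly one such point $p=(\alpha_0:\alpha_1:\gamma)$ in $C\cap D$ lying over $(\alpha_0:\alpha_1)$. Thus the points of $C\cap D$ are precisely the images of the roots of $R$ under the projection from $(0:0:1)$, and $\sum_{p\in C\cap D}m_p = cd$, where $m_p$ is the multiplicity of the corresponding linear factor of $R$. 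It remains to identify $m_p$ with $i(C,D;p)$ as in Definition~\ref{def:int-mult}.

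The hard part will be this last identification: showing that the order of vanishing of the resultant at $(\alpha_0:\alpha_1)$ equals $\dim_K\!\big(\mathcal O_p/(f,g)\mathcal O_p\big)$, where $f,g$ are the dehomogenizations of $F,G$ in an affine chart containing $p$. The strategy is to work in the chart $x_0\neq 0$ with affine coordinates $(u,v)=(x_1/x_0,x_2/x_0)$, translate $p$ to the origin, and prove that $\operatorname{ord}_{u}\big(\operatorname{Res}_v(f,g)\big)$ summed over all points of $C\cap D$ lying on a fixed vertical line $u=u_0$ equals $\dim_K\big(K[u,v]_{(u-u_0,\ldots)}/(f,g)\big)$ summed over those points; localizing at $u=u_0$ and using that $K[v]_{(u-u_0)}$ is a PID, one shows $\operatorname{Res}_v(f,g)$ generates the $0$-th Fitting ideal of $K[u,v]/(f,g)$ as a $K[u]$-module, whose length localized at $u_0$ is exactly the total intersection multiplicity over that fiber. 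Since distinct points of $C\cap D$ were arranged to lie on distinct vertical lines, the per-fiber equality gives the per-point equality $m_p=i(C,D;p)$, completing the proof. Alternatively, one may invoke the additivity and normalization properties that characterize $i(C,D;p)$ axiomatically and check the resultant satisfies them; either route requires some care with the algebra of resultants, which is the main technical obstacle, but both are standard and can be cited from \cite{GP08} or \cite{cox2015} rather than carried out in full.
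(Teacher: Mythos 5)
The paper states B\'ezout's theorem without proof; as announced in the introduction, proofs of such foundational results are deferred to \cite{GP08, cox2015}. So there is no in-paper argument to compare against. That said, your resultant-based proof is the classical one and is essentially what those references give: place $(0:0:1)$ off both curves and off all secants of $C\cap D$ (adding, for tidiness, the condition that $V(x_0)$ also misses $C\cap D$, so that you can really work in a single affine chart as your third paragraph assumes), form $R=\operatorname{Res}_{x_2}(F,G)$, observe it is homogeneous of degree $cd$ and nonzero since $F,G$ are coprime, and factor it over the algebraically closed field $K$.

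The key technical point you flag --- that $\operatorname{ord}_{(\alpha_0:\alpha_1)}R$ equals $\dim_K\bigl(\mathcal O_p/(f,g)\mathcal O_p\bigr)$ --- is correct, and your Fitting-ideal route is sound. The cleanest way to see it is the one implicit in your sketch: after localizing $A=K[u]$ at $u_0$ so that $A$ is a DVR and the leading coefficient of $f$ in $v$ is a unit, $A[v]/(f)$ is free of rank $\deg_v f$ over $A$, multiplication by $g$ is an $A$-linear endomorphism with determinant $\operatorname{Res}_v(f,g)$ up to a unit, and $A[v]/(f,g)$ is its cokernel; hence $\operatorname{Fitt}_0\bigl(A[v]/(f,g)\bigr)=(\operatorname{Res}_v(f,g))$, and the length of a torsion module over a DVR is the valuation of the generator of its zeroth Fitting ideal. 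Since the projection was chosen injective on $C\cap D$, the per-fiber length is the single local multiplicity $i(C,D;p)$, giving $\sum_p i(C,D;p)=\deg R=cd$. Your proposal is therefore correct and matches the standard argument the paper implicitly relies on; the only nit is to make the chart choice uniform as noted above.
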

 \noindent
Here, the {\emph{degree}} of a projective plane curve is the degree of a square-free homogeneous polynomial defining it.

 \begin{example} Consider the projective closures $C$ and $D$ of the affine plane curves defined by the polynomials
$\color{blue}{f=(x-y)((x+y)^2-(x-y)^3)-(x+y)^4 }$ and $\color{red}{g=y^{2}-x^{2}+3x^{3} }$, respectively:
 \begin{center}
 \includegraphics[scale=0.2,angle=-90]{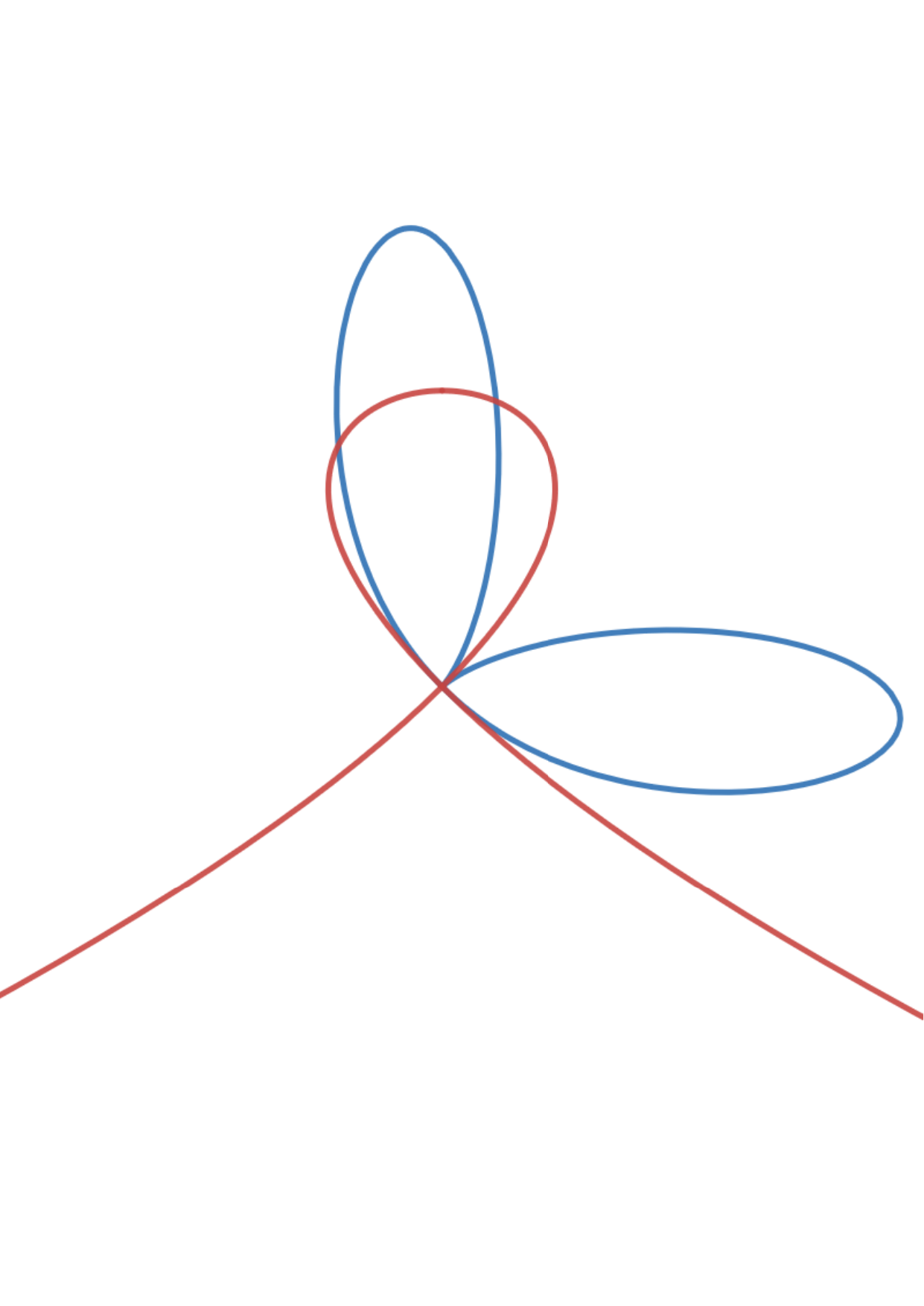}
 \end{center}
We show that $i(C,D;p) = 8$. In this example, as an alternative to the recipe from Example \ref{ex:twocusps}, 
we can also proceed as follows:
\inputminted[firstline=1,lastline=23]{jlcon}{ex23.jlcon}
\noindent
The \OSCAR implementation of the primary decomposition algorithm applied above depends on some random choices. 
As a result, another run of the algorithm may return the primary components in a different order.
\inputminted[firstline=25,lastline=31]{jlcon}{ex23.jlcon}
\noindent
 There is a conjugated group of four more intersection points, two of which are visible and real, as one can see from the minimal polynomial of the component in the absolute primary decomposition
 of $I=(f,g) \subset \QQ[x,y]$: 
\inputminted{jlcon}{ex23a.jlcon}
\noindent
Since $8+4=4\cdot 3$, there are no intersection points on the line at infinity.
 \end{example}

Most curves do not have a rational parametrization. In the case $K=\CC$, there is a topological obstruction. Let $C \subset \PP^{n}$ be a smooth projective curve defined over $\CC$.
Then $V(C)$ carries the structure of a  $1$-dimensional complex manifold, that is, a compact Riemann surface. The underlying real $2$-dimensional differential manifolds of Riemann surfaces are oriented and classified by their genus $g$, that is, by their number of handles, see Figure~\ref{fig:genus}.

\begin{figure}[htbp]
    \centering
    \includegraphics[height=2cm]{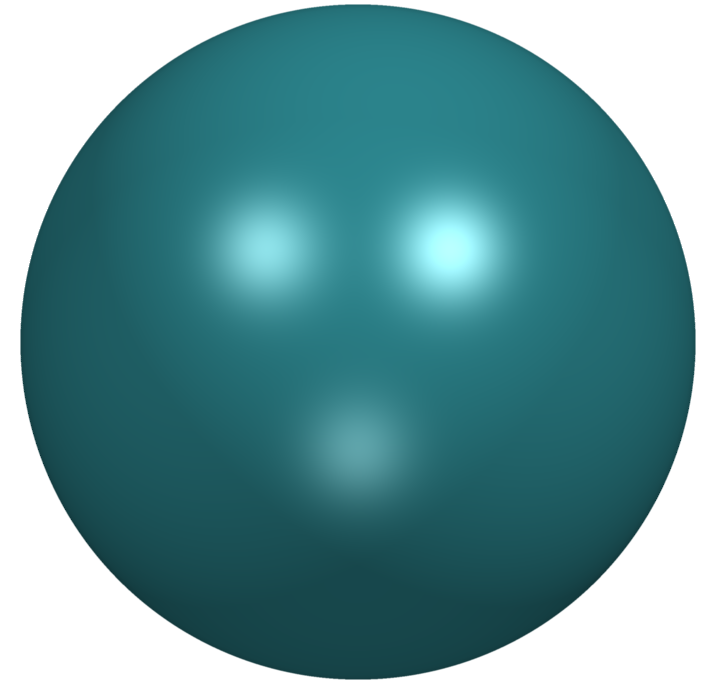}\hfill
    \includegraphics[height=2cm]{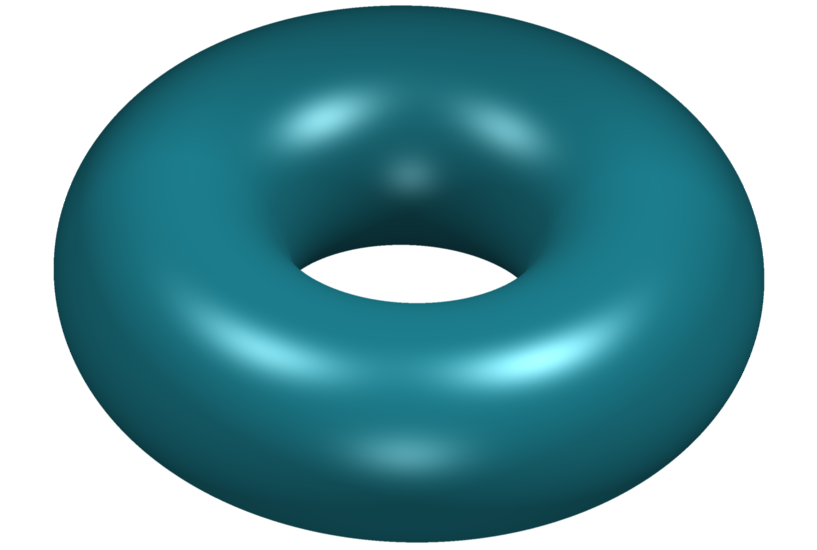}\hfill
    \includegraphics[height=2cm]{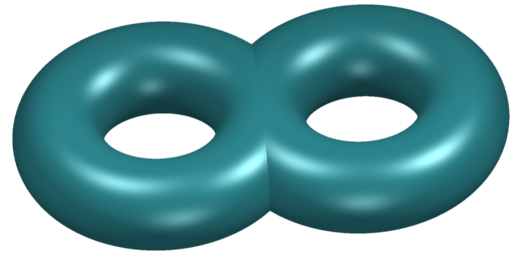}
    \caption{Illustration of Riemann surfaces of genus zero, one, and two.}
    \label{fig:genus}
\end{figure}

A rational parametrization gives rise to a differentiable surjective finite branched covering $\PP^{1}(\CC) \to C(\CC)$. Hence such a map cannot exist 
if $g=g(C)>0$ since the preimage of a non-contractible loop in $C$ would be contractible in $\PP^{1}(\CC)$.

\begin{theorem} An irreducible projective plane curve $C \subset \PP^{2}$ has  a rational parametrization iff its geometric genus (see below) is zero.
\end{theorem}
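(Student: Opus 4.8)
### Proof proposal

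The plan is to prove both directions by relating the geometric genus of $C$ to the existence of a rational parametrization, using the tools of projective curve theory that sit naturally behind the dictionary developed above.

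First I would clarify the definition: the \emph{geometric genus} of an irreducible projective plane curve $C\subset\PP^2$ is the genus of its normalization $\widetilde C$, equivalently (over $\CC$) the genus of the compact Riemann surface associated to $\widetilde C$. Concretely, if $C$ has degree $d$, then the arithmetic genus is $\binom{d-1}{2}$, and the geometric genus is obtained by subtracting the contributions of the singularities: $g(C) = \binom{d-1}{2} - \sum_{p}\delta_p(C)$, where the $\delta_p$ are the delta invariants introduced in the previous section (for an ordinary multiple point of multiplicity $m$ one has $\delta_p = \binom{m}{2}$, as already noted). The normalization map $\nu\colon \widetilde C \to C$ is birational, so $C$ has a rational parametrization $\PP^1 \dashrightarrow C$ if and only if $\widetilde C$ does, i.e.\ if and only if $\widetilde C \cong \PP^1$.

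For the direction ``$g(C)=0 \Rightarrow$ rational parametrization'': I would invoke the classification of smooth projective curves of genus zero — any such curve is isomorphic to $\PP^1$. Over $\CC$ this is the statement that a compact Riemann surface of genus $0$ is the Riemann sphere; algebraically it follows from Riemann–Roch, since on a genus-$0$ curve a divisor of degree $1$ is very ample and embeds the curve as a conic, hence a point on it gives a degree-one map to $\PP^1$. Composing the isomorphism $\PP^1 \xrightarrow{\ \sim\ } \widetilde C$ with $\nu$ yields the desired rational parametrization; the classical way to write it down explicitly is the adjoint-curve / projection-from-a-point construction, exactly as illustrated for the circle in Example~\ref{ex circ} (there $d=2$, $g=0$, and projection from the point $(0,1)$ does the job). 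For the converse, ``rational parametrization $\Rightarrow g(C)=0$'': as explained in the paragraph preceding the theorem in the $\CC$ case, a rational parametrization gives a surjective finite branched covering $\PP^1(\CC) \to C(\CC)$, which lifts to a nonconstant map $\PP^1 \to \widetilde C$; since $\PP^1$ has genus $0$ and there is no nonconstant morphism from a curve of lower genus onto one of higher genus (Riemann–Hurwitz, or the topological argument with non-contractible loops already given), $\widetilde C$ must also have genus $0$, i.e.\ $g(C)=0$.

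The main obstacle is the genus-zero classification theorem — that an abstract smooth projective curve of geometric genus zero actually \emph{is} $\PP^1$, and moreover over the stated field $K$ (algebraically closed, so every genus-$0$ curve has a rational point, hence a degree-one divisor, hence the Riemann–Roch argument goes through). This is where the real content lies; everything else (birational invariance of the geometric genus under normalization, Riemann–Hurwitz for the converse, the explicit adjoint-curve parametrization) is standard curve theory that I would simply cite. I would therefore structure the write-up as: (1) reduce to the normalization $\widetilde C$; (2) cite the genus-$0$ classification for the forward direction and indicate the explicit parametrization via projection/adjoints; (3) apply Riemann–Hurwitz (or the topological obstruction) for the converse.
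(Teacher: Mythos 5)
The paper states this theorem without proof; it is presented as a known fact, following standard references such as \cite{GP08}, with only the preceding paragraph sketching (over $\CC$) a topological argument for the ``only if'' direction. So there is no proof in the paper against which to compare your write-up.

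That said, your proposed proof is the standard one and is essentially correct: reduce to the normalization $\widetilde C$ via birational invariance; use the classification of smooth genus-$0$ curves over an algebraically closed field (genus-$0$ plus a rational point gives, by Riemann--Roch, a degree-one divisor with $h^0=2$, hence an isomorphism to $\PP^1$) for one implication; and use Riemann--Hurwitz (or the topological obstruction the paper already sketches) for the converse. You also correctly identify that the real content is the genus-$0$ classification and that algebraic closedness of $K$ is needed to produce a rational point. One minor wrinkle worth tightening: in the Riemann--Roch step you say a degree-$1$ divisor ``embeds the curve as a conic,'' but a degree-$1$ divisor on a genus-$0$ curve yields an isomorphism onto $\PP^1$ directly (it is the anticanonical degree-$2$ divisor that embeds the curve as a conic, after which one projects from a point); either route works, but as written the two constructions are conflated. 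Also, for the converse you should note explicitly that a dominant rational map $\PP^1\dashrightarrow C$ extends to a morphism because $\PP^1$ is a smooth curve, and that this morphism factors through $\widetilde C$ by the universal property of normalization (since $\PP^1$ is normal); you gesture at this (``lifts to''), but it deserves a sentence. With those small clarifications, the argument is complete.
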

\noindent
In the smooth case over $\CC$, the geometric genus coincides with the topological genus considered above.
We do not recall the definition of the geometric genus in full generality. Instead, we recall a formula 
which relates the geometric genus of a projective plane curve to the \emph{total delta invariant} of the curve
(compare with the previous section):

\begin{proposition}\label{prop:geom-genus} The geometric genus $p_g(C)$ of a curve $C$ of degree $d$ with isolated singularities is
\[
p_g(C) = \frac{(d-1)(d-2)}{2} - \sum_{p \in C} \delta_p(C).
\]
\end{proposition}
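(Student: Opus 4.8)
The plan is to reduce the statement to the classical genus--degree formula for smooth plane curves by a sequence of birational modifications, keeping track of how each step changes the two sides of the equation. First I would recall the setup: let $C = V(f)\subset \PP^2$ be irreducible of degree $d$, defined by a square-free homogeneous polynomial $f$, and let $\nu\colon \widetilde C \to C$ be the normalization. The geometric genus $p_g(C)$ is by definition the genus of the smooth projective curve $\widetilde C$, i.e.\ $\dim_K H^1(\widetilde C, \mathcal O_{\widetilde C})$ (or, over $\CC$, the topological genus of the Riemann surface). The right-hand side involves the arithmetic genus $p_a(C) = \binom{d-1}{2}$ of a plane curve of degree $d$, which one computes from the Hilbert polynomial of $S/\II(C)$; this is where the hypothesis that $C$ is a plane curve enters decisively.

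The key step is the local-to-global comparison: the normalization map is an isomorphism away from $\operatorname{Sing}(C)$, and at each singular point $p$ the discrepancy is measured precisely by the delta invariant $\delta_p(C) = \dim_K(\overline{A_p}/A_p)$ introduced above, where $A_p = \mathcal O_p/f\mathcal O_p$. Globalizing, one gets an exact sequence of sheaves $0 \to \mathcal O_C \to \nu_*\mathcal O_{\widetilde C} \to \mathcal S \to 0$, where $\mathcal S$ is a skyscraper sheaf supported on $\operatorname{Sing}(C)$ with stalk of length $\delta_p(C)$ at $p$, so $\dim_K H^0(C,\mathcal S) = \sum_{p\in C}\delta_p(C)$. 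Taking the long exact sequence in cohomology and using that $\widetilde C$ is connected (as $C$ is irreducible) together with finiteness of $\nu$ (so $H^i(C,\nu_*\mathcal O_{\widetilde C}) = H^i(\widetilde C,\mathcal O_{\widetilde C})$), one obtains
\[
p_g(C) = h^1(\widetilde C,\mathcal O_{\widetilde C}) = h^1(C,\mathcal O_C) - \sum_{p\in C}\delta_p(C) = p_a(C) - \sum_{p\in C}\delta_p(C).
\]
Finally I would substitute $p_a(C) = h^1(C,\mathcal O_C) = \binom{d-1}{2} = \frac{(d-1)(d-2)}{2}$, which follows from the standard resolution $0 \to S(-d) \xrightarrow{f} S \to S/(f) \to 0$ of the homogeneous coordinate ring and a Hilbert-polynomial computation. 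This yields exactly the claimed formula.

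The main obstacle is the identification of the stalk length of $\mathcal S$ at $p$ with $\delta_p(C)$ as defined in the text, and more precisely checking that the affine local ring $\mathcal O_o/f\mathcal O_o$ used in the definition of the delta invariant agrees with the stalk $\mathcal O_{C,p}$ of the structure sheaf in the projective setting (passing between an affine chart and $\PP^2$, which the excerpt has already justified is harmless for such local invariants), and that its normalization matches $(\nu_*\mathcal O_{\widetilde C})_p$. One also needs that $\overline{A_p}/A_p$ is finite-dimensional — equivalently, that $C$ has only isolated singularities, which is exactly the hypothesis — so that $\mathcal S$ really is a skyscraper sheaf and its $H^0$ is the finite sum $\sum_p \delta_p(C)$. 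Everything else (Serre finiteness, the cohomology of $\mathcal O_{\PP^2}$-twists, connectedness of the normalization of an irreducible curve) is standard and can be cited from \cite{GP08} or a standard algebraic geometry reference.
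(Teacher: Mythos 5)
The paper itself gives no proof of this proposition: it is stated as a recalled formula, with the earlier blanket pointer to the textbooks \cite{GP08, cox2015} for ``basic details and proofs not given here.'' So there is no ``paper's route'' to compare against; the task is just to judge whether your argument is a correct proof.

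It is, and it is the standard one. The three ingredients are all in order: the arithmetic genus $p_a(C) = h^1(C,\mathcal O_C) = \binom{d-1}{2}$ from the Koszul/twist resolution $0 \to \mathcal O_{\PP^2}(-d)\to\mathcal O_{\PP^2}\to\mathcal O_C\to 0$; the conductor sequence $0\to\mathcal O_C\to\nu_*\mathcal O_{\widetilde C}\to\mathcal S\to 0$ with $\mathcal S$ a skyscraper sheaf whose stalk at $p$ has length $\delta_p(C)$; and the long exact cohomology sequence, using irreducibility of $C$ (so $\widetilde C$ is connected, hence $H^0(\mathcal O_C)\to H^0(\nu_*\mathcal O_{\widetilde C})$ is an isomorphism of one-dimensional spaces, leaving the short exact sequence $0\to H^0(\mathcal S)\to H^1(\mathcal O_C)\to H^1(\mathcal O_{\widetilde C})\to 0$) and affineness/finiteness of $\nu$ (so $H^i(C,\nu_*\mathcal O_{\widetilde C})\cong H^i(\widetilde C,\mathcal O_{\widetilde C})$). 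You correctly flag the only point that needs checking against the text's definitions, namely that the localized affine ring $A_o=\mathcal O_o/f\mathcal O_o$ in the paper's definition of $\delta_p$ is the stalk $\mathcal O_{C,p}$, and that its normalization $\overline{A_o}$ (a semi-local ring, the product of the local rings on $\widetilde C$ above $p$) is $(\nu_*\mathcal O_{\widetilde C})_p$; since the paper defines $\delta_p$ via an affine chart and then transports, this is exactly the harmless chart-independence the paper already appeals to. One could also remark that for a reduced plane curve the ``isolated singularities'' hypothesis is automatic, so its real role in your argument is to make $\mathcal S$ a finite-length skyscraper — which you note. This is a complete and correct filling-in of the proof the paper omits.
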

Note that $\delta_p(C) >0$ only for the finitely many singularities $p$ of $C$.
The first term on the right hand side is the \emph{the arithmetic genus} $p_a(C)$ of $C$ (see Section \ref{syzygies} for the general definition of the arithmetic genus).

\begin{example}
Consider the plane quintic curve with defining polynomial
\begin{align*}
f = & -3x^{5} - 2x^{4}y - 3x^{3}y^{2} + xy^{4} + 3y^{5} + 6x^{4} + 7x^{3}y\\
    &  + 3x^{2}y^{2} - 2xy^{3} - 6y^{4} - 3x^{3} - 5x^{2}y + xy^{2} + 3y^{3}
\end{align*}
as in Example \ref{ex:quintic-curve}. See Figure \ref{fig deg5} for a real picture.
\begin{figure}
[h]
\begin{center}
\includegraphics[
height=2in,
]%
{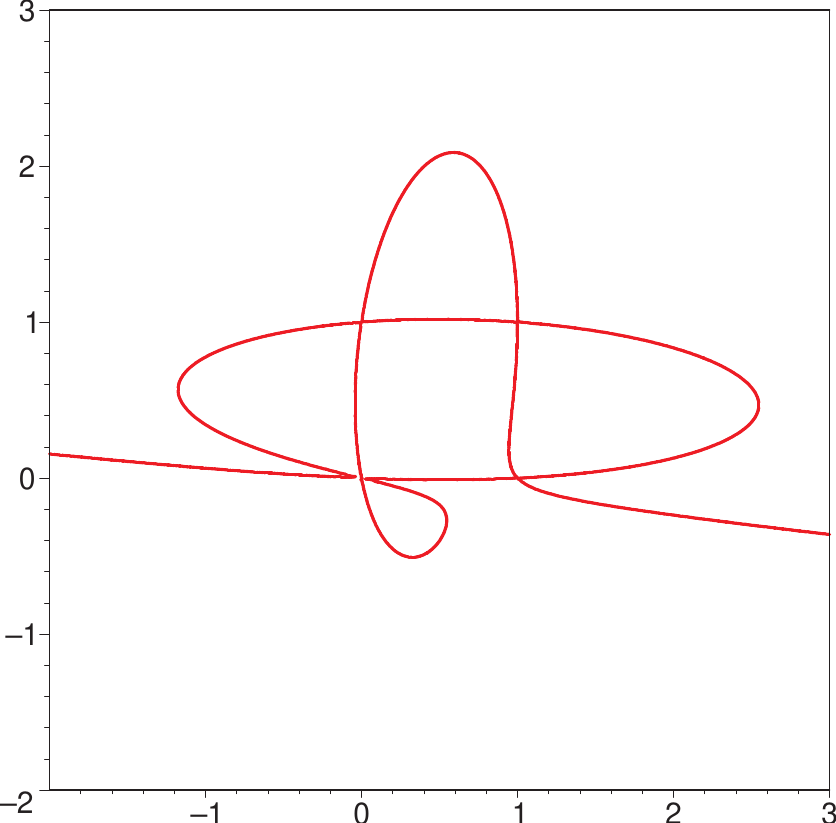}%
\caption{Degree $5$ plane curve with three double points and one triple
point.}%
\label{fig deg5}%
\end{center}
\end{figure}
The projective closure $C\subset \PP^{2}$ has a triple point at the origin, and double points at the points with coordinates $(0,1), (1,0),(1,1)$ as its only singularities. So its  geometric genus is
(see Proposition \ref{prop:geom-genus} and Example \ref{ex:quintic-curve}):

$$p_g(C)={4 \choose 2}-{3 \choose 2}-3{2 \choose 2}={4 \choose 2}-6=0.$$
 It follows that $C$ has a rational parametrization. Using B\'ezout's theorem, we can find a parametrization as follows:
Consider the pencil (one parameter family) of conics through the four singular points,
$$D_{t}=V( t(x^2-x)+y^{2}-y).$$ 
\begin{figure}
[h]
\begin{center}
\includegraphics[
height=2in,
]%
{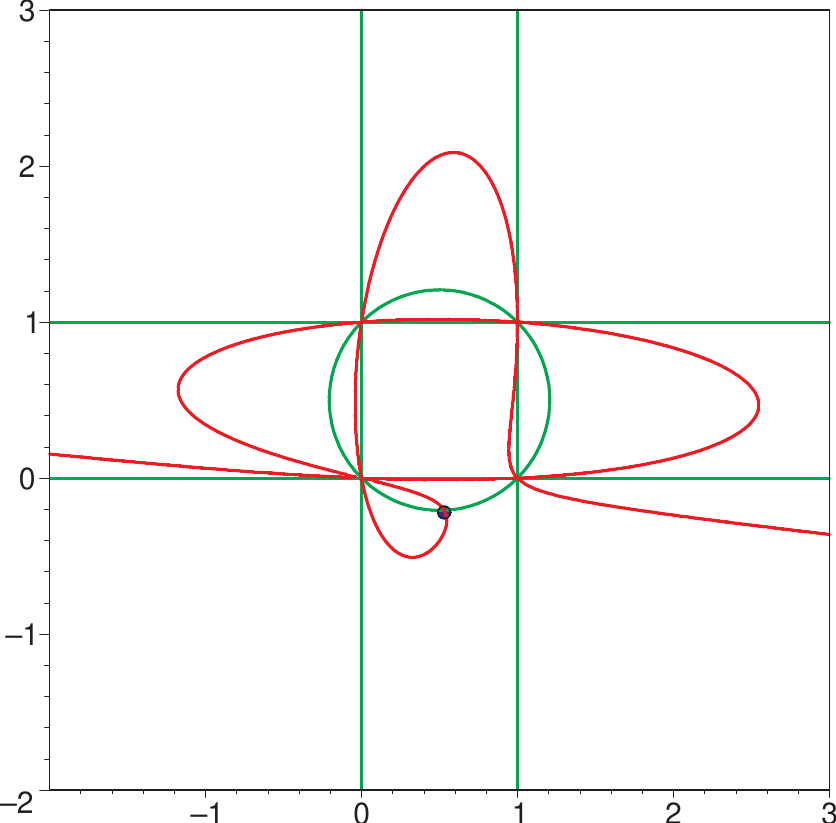}%
\caption{Three curves in the pencil of quadrics through the singular
points of $C$.}%
\label{fig deg5 linsys}%
\end{center}
\end{figure}
\noindent
Each curve $D_{t}$ %
intersects $C$ with intersection multiplicity $3$ at the origin and intersection multiplicity $2$ at the double points. Three curves in the pencil (corresponding to $t=0, \infty, 1$) are depicted in Figure~\ref{fig deg5 linsys}. 
 Since
$$2\cdot 5-3-2-2-2=1,$$
B\'ezout's theorem implies that there is exactly one moving intersection point $p(t)$. In the figure, for $t=1$, this intersection point is shown. Computing the coordinates of this point %
gives a rational parametrization. We observe that for both $t$ and $C$ we have to consider points at infinity. This motivates us to consider parametrizations as maps in the projective setting.

\inputminted[firstline=1,lastline=20]{jlcon}{param.jlcon}

\end{example}

For applications it would be nice to have an algorithm for computing rational parametrizations which, ideally, are defined 
over the given field of definition of $C$. The proof of the theorem below yields such an algorithm: 

\begin{theorem}[Hilbert and Hurwitz]\label{thmHH} Let $C \subset \PP^{2}=\PP^{2}(K)$ be an irreducible curve of degree $d$ with defining equation over $k$. 
If $d$ is odd, then there exists a rational parametrization of $C$ defined over $k$. If $d$ is even, then there exists a rational 
parametrization defined over an algebraic extension field $k(a)$ of $k$ of degree $[k(a):k]=2$.
\end{theorem}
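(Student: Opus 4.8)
Since a plane curve of positive geometric genus admits no rational parametrization whatsoever, I may assume throughout that $C$ has geometric genus zero; by Proposition~\ref{prop:geom-genus} this amounts to saying that the total delta invariant of $C$ equals $\binom{d-1}{2}$. The plan is to use the linear system of adjoint curves of degree $d-2$ to exhibit $C$ as $k$-birational to a smooth conic over $k$ carrying a $k$-rational divisor class of degree $d-2$, and then to deduce the field of definition of a parametrization from the parity of $d-2$, that is, of $d$. The degenerate cases $d=1$ (a line, already isomorphic to $\PP^1$ over $k$) and $d=2$ (a smooth conic over $k$, which is the output of the construction with no work) will be absorbed along the way, so assume $d\ge 3$ for the main argument.

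First I would form the linear system $\mathcal{A}$ of degree-$(d-2)$ plane curves adjoint to $C$, i.e., those whose local equation at each point of $C$ lies in the conductor ideal of the normalization $\widetilde C\to C$. Since $C$ is defined over $k$, so is its singular subscheme (cut out by a defining polynomial together with its partial derivatives), hence so is the conductor, and therefore $\mathcal{A}$ is defined over $k$; this is precisely why the parametrization will descend as far as claimed. I would then invoke the classical facts that the singularities impose independent linear conditions on adjoints of degree $\ge d-2$, so that $\dim\mathcal{A}=\binom{d}{2}-1-\binom{d-1}{2}=d-2$, and that by B\'ezout a general member of $\mathcal{A}$ cuts on the smooth model $\widetilde C\cong\PP^1$ a divisor whose moving part has degree $d(d-2)-2\binom{d-1}{2}=d-2$. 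It follows that $\mathcal{A}$ maps isomorphically onto the complete system $|\mathcal{O}_{\PP^1}(d-2)|$, so the associated rational map $\varphi\colon C\to\PP^{d-2}$ restricts on $\widetilde C$ to the degree-$(d-2)$ rational normal embedding; its image $R$ is a rational normal curve of degree $d-2$, and both $\varphi$ and $R$ are defined over $k$. Re-embedding $R$ by its anticanonical bundle, which is defined over $k$ because the canonical bundle is intrinsic, realizes $R$ over $k$ as a smooth conic $Q\subset\PP^2$; this $Q$ inherits from $\mathcal{O}_R(1)$ a $k$-rational divisor class of degree $d-2$. Since $C$ and $Q$ are $k$-birational, it now suffices to parametrize $Q$.

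If $d$ is odd, then $Q$ carries a $k$-rational divisor class of odd degree, hence also one of degree $1$ after twisting by a suitable multiple of the degree-$2$ class $\mathcal{O}_Q(1)$; such a class has $h^0=2$ by Riemann--Roch, so its complete linear system defines a $k$-isomorphism $Q\xrightarrow{\sim}\PP^1_k$. Composing with a birational inverse of $\varphi$ gives a rational parametrization of $C$ defined over $k$. If $d$ is even, I would intersect $Q$ with a general line defined over $k$; by B\'ezout this intersection is a reduced length-$2$ subscheme, hence a closed point of $Q$ of degree $[k(a):k]\le 2$. Projecting $Q$ away from this point identifies $Q_{k(a)}$ with $\PP^1_{k(a)}$, and composing through $\PP^1_{k(a)}\xrightarrow{\sim}Q$ and a birational inverse of $\varphi$ gives a rational parametrization of $C$ defined over $k(a)$ with $[k(a):k]\le 2$ (and one of degree exactly $2$, as stated, upon padding with an arbitrary quadratic extension if necessary). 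The bound is sharp, since a smooth conic over $k$ with no $k$-point has no parametrization over $k$; and since every map above is effectively computable, this argument is the algorithm referred to.

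The hard part is the adjoint theory of the middle step: that the conductor, and hence $\mathcal{A}$, descends to $k$; that the singularities impose independent conditions on adjoints of degree $d-2$, giving $\dim\mathcal{A}=d-2$; and that $\varphi$ identifies the smooth model of $C$ with a rational normal curve. A secondary technicality is the treatment of non-ordinary singularities — one works throughout with the conductor ideal, using Gorenstein duality for plane curve singularities, or first reduces to ordinary singularities by quadratic Cremona transformations defined over $k$. The remaining ingredients, namely the Riemann--Roch computations for the (anti)canonical bundle of a genus-zero curve and the B\'ezout count producing the degree-$\le 2$ point, are routine.
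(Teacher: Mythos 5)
Your proof is correct and fleshes out the strategy the paper only sketches in the remark after the theorem: form the adjoint system, map to a rational normal curve $R\subset\PP^{d-2}$ without leaving $k$, reduce to $\PP^1$ or a conic, and decide the field of definition from that reduction. The only real variation is that you collapse the paper's iterated anticanonical reduction into a single anticanonical embedding $R\hookrightarrow Q\subset\PP^2$ and then read off the parity from the $k$-rational degree-$(d-2)$ divisor class which $Q$ inherits from $\mathcal{O}_R(1)$ (odd degree gives a degree-one class, hence a $k$-point; even degree leaves only even classes, whence the quadratic extension) --- an equivalent and arguably cleaner way to run the endgame, with the descent of the conductor, the dimension count $\dim\mathcal{A}=d-2$, and the B\'ezout/Riemann--Roch bookkeeping matching what the paper relies on.
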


Note that the quadratic field extension is not needed if a particular conic computed by the algorithm has a $k$-rational point. 
A considerably enhanced variant of the algorithm is used by \OSCAR.

\begin{example}
The idea of Theorem~\ref{thmHH} is to determine the adjoint ideal of $C$, see \cite{adjoint}. The degree $d-2$ part of this ideal determines a rational map to a rational normal curve in $\PP^{d-2}$. We then iteratively use the anti-canonical map to arrive either at $\PP^{1}$ or a plane conic. These computations do not leave the field of definition. In the latter case, one can then decide, whether the conic has a rational point. Note that in the above example, this step is not required since the curve has odd degree. The corresponding algorithms are implemented in \OSCAR in the commands \mintinline{jl}{adjoint_ideal}, and \mintinline{jl}{rational_point_conic}:
\inputminted[firstline=22,lastline=50]{jlcon}{param.jlcon}
\noindent
The command \mintinline{jl}{parametrization} combines all steps for finding a rational parametrization into one command.
\inputminted[firstline=52,lastline=56]{jlcon}{param.jlcon}
\end{example}

\section{Syzygies}\label{syzygies}
Write $R=k[x_{1},\ldots,x_{n}]$, and let $F=R^{s}$ be the free $R$-module with basis $e_{1},\ldots ,e_{s}$. Elements of $F$ are simply column vectors
$$ \begin{pmatrix} p_{1} \\ \vdots \\ p_{s} \end{pmatrix}=\sum_{i=1}^{s} p_{i}e_{i},$$
with polynomial entries $p_{i} \in R.$ In the same spirit, we will usually not distinguish between an $s\times r$-matrix $A$ with polynomial entries $a_{i, j} \in R$
and the corresponding $R$-module homomorphism $\varphi_{A}\colon R^{r} \to R^{s}$. 
\begin{remark}
In \OSCAR,  we have a different convention:   Free module elements are thought of as row vectors, and matrices operate by multiplication on the right.
This explains the use of \mintinline{jl}{transpose} in Example \ref{ex:syz-int} below.
\end{remark}

A \emph{syzygy} on $f_{1},\ldots,f_{r} \in F$ is a tuple $(g_{1},\ldots,g_{r})^{t} \in R^{r}$ such that $\sum_{i=1}^{r} g_{i}f_{i}=0$.
That is, the syzygies are precisely the elements of the kernel of the $R$-module homomorphism $\varphi_{A}\colon R^{r} \to R^{s}$ defined by the $s\times r$-matrix $A$ with columns $f_{1},\ldots,f_{r}$.
This kernel is a finitely generated $R$-module which is called the \emph{syzygy module} of $f_{1},\ldots,f_{r}$. An $r\times t$ matrix $B$ whose columns generate  the kernel is called a \emph{syzygy matrix} of $A$.
 
The computation of syzygies has plenty of applications. For instance:
 
 \begin{proposition} Let $I=(f_{1},\ldots,f_{r})$ and $J=(g_{1},\ldots,g_{s}) \subset R$ be two ideals. Consider the $2\times (r+s+1)$ matrix
 $$A=\begin{pmatrix} 
 1 & f_{1} & \ldots & f_{r} & 0 &\ldots & 0 \\
 1 & 0 &\dots & 0 & g_{1} & \ldots & g_{r}
 \end{pmatrix}.$$
  Let 
  $B=(b_{ij})$
be a syzygy matrix of $A$ of size $(r+s+1)\times t$, say. Then the ideal $I\cap J \subset R$ is generated by the entries of the first row of $B$.
 \end{proposition}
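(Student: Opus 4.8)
The plan is to describe the syzygy module of $A$ explicitly and then pass to the first row of $B$. First I would observe that a column vector $(a,c_1,\ldots,c_r,d_1,\ldots,d_s)^t \in R^{r+s+1}$ is a syzygy on the columns of $A$ exactly when the two row equations
$$a + c_1 f_1 + \cdots + c_r f_r = 0 \quad\text{and}\quad a + d_1 g_1 + \cdots + d_s g_s = 0$$
both hold, i.e.\ when $a = -\sum_i c_i f_i \in I$ and simultaneously $a = -\sum_j d_j g_j \in J$; in particular $a \in I\cap J$. Conversely, for any $a\in I\cap J$ one may pick expressions $a = \sum_i c_i f_i$ and $a = \sum_j d_j g_j$, and then $(-a,c_1,\ldots,c_r,d_1,\ldots,d_s)^t$ is a syzygy. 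Hence the $R$-linear projection $\pi_1\colon \ker\varphi_A \to R$ onto the first coordinate has image exactly $I\cap J$.

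The rest is then formal. Since $B$ is a syzygy matrix of $A$, its columns generate $\ker\varphi_A$ as an $R$-module; applying the $R$-linear map $\pi_1$, the first entries of these columns generate $\pi_1(\ker\varphi_A) = I\cap J$. As the first entry of the $k$-th column of $B$ is $b_{1k}$, this says precisely that $I\cap J = (b_{11},\ldots,b_{1t})$.

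I do not expect a genuinely hard step. The one point to handle with care is the sign bookkeeping in the correspondence between elements of $I\cap J$ and syzygies, together with the key observation — really the content of the proposition — that $\pi_1$ restricted to $\ker\varphi_A$ is \emph{onto} $I\cap J$, not merely lands inside it. Everything else is the standard fact that the $R$-linear image of a generating set generates the image.
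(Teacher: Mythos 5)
Your proof is correct, and it is the natural and standard argument for this kind of statement: identify syzygies on the columns of $A$ with pairs of representations of a common element $a$ in $I$ and in $J$, so the first coordinate of a syzygy ranges exactly over $I\cap J$, and then observe that an $R$-linear map (here the first-coordinate projection) carries generators of $\ker\varphi_A$ to generators of the image. The paper itself states this proposition without proof (it is presented as an application of syzygy computation, with the analogous recipe for colon ideals left to the reader), so there is no alternative argument in the text to compare against; your reasoning fills the gap correctly, and the sign issue you flagged is indeed the only point one must watch, resolved as you note by the fact that $I\cap J$ is closed under negation.
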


 \noindent
We leave it to the reader to formulate a recipe for computing \emph{colon ideals} $$I:J = \{ f \in k[x_1, \dots, x_n] \mid fg\in I \text{ for all } g\in J\}.$$
Iteratively, this allows us to compute the \emph{saturation}
 $$
 I:J^{\infty} := \{ f \in k[x_1, \dots, x_n]\mid f J^m \subset I {\text{ for some }}
m\geq 1 \} = \bigcup_{m=1}^{\infty} (I:J^m)
 $$
 
\begin{example}\label{ex:syz-int} We have
$$(x_{0},x_{1})\cap (x_{2},x_{3})=(x_{0}x_{2},x_{1}x_{2},x_{0}x_{3},x_{1}x_{3})\subset \QQ[x_{0},\ldots, x_{3}].$$
Indeed, setting
$$A=\begin{pmatrix}
       1&x_{0}&x_{1}&0&0\\
       1&0&0&x_{2}&x_{3}
       \end{pmatrix},$$
       we get:

       \inputminted{jlcon}{ex32.jlcon}
\end{example} 

Syzygies play a particular important role when it comes to Buchberger's algorithm. We discuss this algorithm in the general context of submodules of free modules. We extend some of our terminology from ideals to this case. 
\begin{definition}
A \emph{monomial} in $F$ is the product $x^{\alpha}e_{i}$ of a monomial $x^{\alpha}\in R$ and a basis vector $e_i$ of $F$, a \emph{term} in $F$ is a scalar times a monomial. A \emph{monomial ordering} $>$ on $F$ is a total ordering $>$ on the set of monomials satisfying
$$x^{\alpha}e_{i} > x^{\beta}e_{j} \;\Longrightarrow\;x^{\alpha+\gamma}e_{i} > x^{\beta+\gamma}e_{j}$$
for any triple consisting of two monomials $x^{\alpha}e_{i}$, $x^{\beta}e_{j}$ in $F$ and a monomial $x^{\gamma}$ in $R$.  For simplicity, we require in addition that
$$ 
x^\alpha e_i > x^\beta e_i \iff x^\alpha e_j > x^\beta e_j 
\;\hbox{ for all }\; i,j.
$$
Then $>$ induces a unique monomial ordering on $R$ in the obvious way, 
and we say that  $>$ is {\emph{global}} if the induced ordering on $R$ is global. 
\end{definition}
\textbf{For the rest of this section}, $>$ will be a global monomial ordering on $F$.
Unless otherwise stated, all our considerations will be relative to this ordering.

If $0\neq f= \sum f_{\alpha,i} x^{\alpha}e_{i} \in F$ is written as the finite sum of its nonzero terms, 
then the  \emph{leading term} of $f$ with respect to $>$ is
$$\Lt(f) \;\!:=\;\! \Lt_>(f) \;\!:=\;\! f_{\beta,j}x^{\beta}e_{j}, \hbox{  where }
x^{\beta}e_{j} = \max\{ x^{\alpha}e_{i} \mid f_{\alpha,i}\not= 0 \}.$$
The notions \emph{leading module} and \emph{Gröbner basis} of a submodule $I \subset F$ extend similarly from the ideal case.

\begin{theorem}[Division With Remainder in Free Modules]\label{thm:div-free-modules} Let $f_{1},\ldots,f_{r} \in F\setminus \{0\}$
be given. For each $f \in F$, there is a unique expression $f=g_{1}f_{1}+\ldots g_{r}f_{r} +h$, with
$g_{1},\ldots, g_{r} \in R$ and a \emph{remainder} $h\in F$, and such such that:
\begin{enumerate}
\item\label{thm:divrem-1} for $i>j$, no term of $g_{i}\Lt(f_{i})$ is a multiple of $\Lt(f_{j})$,
\item\label{thm:divrem-2}  for all $i$, no term of $h$ is a multiple of $\Lt(f_{i})$.
\end{enumerate}
\end{theorem}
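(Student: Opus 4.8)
The plan is to follow the proof of Theorem~\ref{thm:division} (the ideal case) essentially verbatim; the only genuinely new point is to fix what ``multiple'' means for monomials of $F$, namely that $x^{\beta}e_{j}$ is a multiple of $x^{\alpha}e_{i}$ exactly when $i=j$ and $x^{\alpha}\mid x^{\beta}$ in $R$. Once this is understood, conditions~\ref{thm:divrem-1} and~\ref{thm:divrem-2} are meaningful and everything else is bookkeeping with the well-ordering $>$ on the monomials of $F$.

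For existence, I would first dispose of the case in which $f$ and all the $f_{i}$ are terms: given a term $f=c\,x^{\beta}e_{j}$, let $i_{0}$ be the \emph{smallest} index with $\Lt(f_{i_{0}})\mid f$ (if one exists), put $g_{i_{0}}=f/\Lt(f_{i_{0}})\in R$ and all other $g_{i}$ and $h$ equal to zero; if no such index exists, put $h=f$ and all $g_{i}=0$. Minimality of $i_{0}$ is precisely what forces condition~\ref{thm:divrem-1} (the only nonzero product $g_{i_{0}}\Lt(f_{i_{0}})=f$ has no term that is a multiple of $\Lt(f_{j})$ for $j<i_{0}$), and condition~\ref{thm:divrem-2} is immediate. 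For general $f$, I would apply this term case to $\Lt(f)$ together with $\Lt(f_{1}),\dots,\Lt(f_{r})$, subtract the resulting combination of the $f_{i}$ and the remainder term from $f$ to obtain $f^{(1)}$, observe that $f^{(1)}=0$ or $\Lt(f^{(1)})<\Lt(f)$, and iterate; termination is forced because $>$ is a well-ordering. Summing the partial expressions gives $f=\sum g_{i}f_{i}+h$, and I would check that conditions~\ref{thm:divrem-1} and~\ref{thm:divrem-2} are inherited by the sums, which works because the nonzero contributions produced at distinct stages of the iteration sit on pairwise distinct monomials of $F$ (they are the strictly decreasing $\Lt(f^{(\ell)})$), so no cancellation can spoil them.

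For uniqueness, suppose $f=\sum g_{i}f_{i}+h=\sum g_{i}'f_{i}+h'$ are two such expressions, set $p_{i}=g_{i}-g_{i}'$ and $q=h-h'$, so that $\sum p_{i}f_{i}+q=0$; note $q$ still satisfies condition~\ref{thm:divrem-2}, since each of its terms is a term of $h$ or of $-h'$. Assuming some $p_{i}\neq 0$, I would look at $x^{\delta}e_{\ell}=\max\{\Lt(p_{i})\Lt(f_{i})\mid p_{i}\neq 0\}$ and the set $T$ of indices attaining it. If the coefficient of $x^{\delta}e_{\ell}$ in $\sum p_{i}f_{i}$ is nonzero, then this monomial appears in $-q$ and is a multiple of $\Lt(f_{i})$ for $i\in T$, contradicting condition~\ref{thm:divrem-2} for $q$. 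Otherwise that coefficient vanishes, so $|T|\ge 2$, and taking $j=\min T<\max T=i$ the monomial $x^{\delta}e_{\ell}=\Lt(p_{i})\Lt(f_{i})$ is a term of $g_{i}\Lt(f_{i})$ or of $g_{i}'\Lt(f_{i})$ that is a multiple of $\Lt(f_{j})$, contradicting condition~\ref{thm:divrem-1} for one of the two given expressions. Hence all $p_{i}=0$, and then $q=0$ as well.

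I do not expect a serious obstacle here: the argument is purely combinatorial once divisibility in $F$ is fixed. The two places that need care are (i) choosing the \emph{first} fitting divisor in the term step so that the quotients obey condition~\ref{thm:divrem-1}, and (ii) in the uniqueness argument, extracting from the cancellation case the pair of indices $j<i$ that violates condition~\ref{thm:divrem-1} — this is the one spot where both conditions are genuinely used.
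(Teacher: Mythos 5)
The paper states Theorem~\ref{thm:div-free-modules} without proof, implicitly treating it as a routine extension of Theorem~\ref{thm:division}, and your existence argument is indeed that extension: the same iterative subtract-and-repeat scheme, terminating by the well-ordering, with the only cosmetic change that you peel off a single leading term per pass (so your base case is ``dividend and divisors are all terms'') rather than dividing the entire dividend by the term divisors at once as in the paper's sketch --- both variants work, and your observation that the nonzero contributions land on the pairwise distinct monomials $\Lt(f^{(\ell)})$ is exactly what makes conditions~\ref{thm:divrem-1} and~\ref{thm:divrem-2} stable under summing the stages. Your uniqueness argument goes beyond what the paper offers even in the ideal case: there the paper merely remarks that uniqueness ``corresponds to'' the algorithm being determinate, which is not by itself a proof (determinism yields one particular output, not that only one expression satisfies the two conditions). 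Your argument --- taking the maximum $x^{\delta}e_{\ell}$ of the products $\Lt(p_i)\Lt(f_i)$, noting that this monomial can only arise in $p_if_i$ from $\Lt(p_i)\Lt(f_i)$, and then splitting on whether the coefficient in $\sum p_if_i=-q$ survives --- is correct at each step, and your extraction of $j=\min T<i=\max T$ in the cancellation case (forced since $|T|=1$ would make the coefficient a product of nonzero leading coefficients) is precisely where condition~\ref{thm:divrem-1} earns its keep. The proposal is correct.
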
 

Let now $I =(f_1, \dots, f_r)\subset F$ be a submodule with given generators $f_i$. Consider the \emph{induced monomial ordering} $>_{1}$ on the free $R$-module $F_{1}:=R^r$ with basis $e^{(1)}_1, \ldots, e^{(1)}_r$:
\begin{align*}
x^{\alpha}e^{(1)}_{i} >_{1} x^{\beta}e^{(1)}_{j}\;  :\Longleftrightarrow\; & x^{\alpha}f_{i} >_{0} x^{\beta}f_{j} \hbox{ or } \\
&( x^{\alpha}f_{i} = x^{\beta}f_{j} \text{ up to a scalar\footnotemark \ and $i>j$}).
\end{align*}
\footnotetext{Note that the phrase ``up to a scalar'' would be superfluous, if we assume that the leading coefficients of the $f_{i}$ are all equal to $1$.}
Then $>_1$ is global since $>$ is global. 

Starting from some set of generators for a given submodule of a free module, the basic idea of Buchberger's algorithm for computing Gröbner bases 
is to consider at each step the remainders of differences of monomial multiples of two elements of the intermediate generating set, chosen so that 
the original leading terms cancel. If a remainder is nonzero, it is added to the generating set. 
To make this an algorithm, a criterion for termination is required.

To avoid superfluous \emph{Buchberger tests}, we consider the following monomial ideals: For a given set $f_{1},\ldots, f_{r}$ of generators of $I$, set
$$M_{i}=(\Lt(f_{1}),\ldots,\Lt(f_{i-1})):\Lt(f_{i}), \hbox{ for } i=2,\ldots, r.$$
Here, $I:J=\{g \in R \mid gJ \subset I \}$ denotes the colon ideal of two submodules $I,J$ of $F$.
Notice that if $x^{\alpha}\in M_{i}$, then the element $x^{\alpha}f_{i}$ is not allowed in a division expression by condition \ref{thm:divrem-1}  
of Theorem \ref{thm:div-free-modules} above.

\begin{theorem}[Buchberger's criterion] Let $I=(f_{1},\ldots,f_{r})\subset F$ be a submodule. Then $f_{1},\ldots, f_{r}$ is a Gröbner basis of $I$ iff for each $i$ and each minimal generator $x^{\alpha}$ of the monomial ideal 
$M_{i}$ above, the remainder upon division of $x^{\alpha}f_{i}$ by $f_{1},\ldots, f_{r}$ is zero.
\end{theorem}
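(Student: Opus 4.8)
The plan is to prove the nontrivial direction: if the remainder upon division of $x^{\alpha}f_{i}$ by $f_{1},\ldots,f_{r}$ is zero for every $i$ and every minimal generator $x^{\alpha}$ of $M_{i}$, then $f_{1},\ldots,f_{r}$ is a Gröbner basis of $I$. The converse is immediate, since each $x^{\alpha}f_{i}$ lies in $I$ and Proposition (part (1), in the module version) says that elements of $I$ reduce to zero against a Gröbner basis. For the main direction, I would work with the induced monomial ordering $>_{1}$ on $F_{1}=R^{r}$ with basis $e^{(1)}_{1},\ldots,e^{(1)}_{r}$ and the surjection $\varphi\colon F_{1}\to I$, $e^{(1)}_{i}\mapsto f_{i}$. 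The key observation is that the leading module $\Lt_{>_{1}}(\ker\varphi)$ is generated by the monomials $x^{\alpha}e^{(1)}_{i}$ with $x^{\alpha}$ a minimal generator of $M_{i}$: indeed, by definition of $M_{i}$, such an $x^{\alpha}f_{i}$ has leading term cancelling against a combination of the $\Lt(f_{1}),\ldots,\Lt(f_{i-1})$, which yields, after the cancellation and using the hypothesis that the remainder is zero, an honest syzygy whose $>_{1}$-leading term is exactly $x^{\alpha}e^{(1)}_{i}$. Conversely every element of $\ker\varphi$ has its $>_{1}$-leading term divisible by one of these, essentially because its leading term $x^{\beta}e^{(1)}_{i}$ (with $i$ maximal among the terms tied for the lead under $>_{0}$) forces $x^{\beta}\in M_{i}$.

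Granting this description of $\Lt_{>_{1}}(\ker\varphi)$, I would then argue as follows. Take any $f\in I$ and write $f=\sum g_{i}f_{i}$. Apply Division With Remainder in Free Modules (Theorem \ref{thm:div-free-modules}) to reduce $f$ against $f_{1},\ldots,f_{r}$, obtaining $f=\sum g'_{i}f_{i}+h$ with the two standard no-divisibility conditions on the $g'_{i}\Lt(f_{i})$ and on $h$. It suffices to show $h=0$, equivalently $\Lt(f)\in(\Lt(f_{1}),\ldots,\Lt(f_{r}))$. Suppose $h\neq 0$; then $(g'_{1},\ldots,g'_{r})^{t}$ maps under $\varphi$ to $f-h$, not to $f$, so instead I look at the element $\sigma:=(g'_{1},\ldots,g'_{r})^{t}-(\text{a lift of }f)\in\ker\varphi$ — more precisely, I choose among all expressions $f=\sum c_{i}f_{i}$ one for which $\max_{>_{1}}\{\,x^{\gamma}e^{(1)}_{i}\mid x^{\gamma}\text{ a term of }c_{i}\,\}$ is minimal. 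If in this minimal expression the maximal term $x^{\gamma}e^{(1)}_{i}$ satisfies $x^{\gamma}\Lt(f_{i})\neq\Lt(f)$, then that leading contribution must cancel against contributions from other $c_{j}$, so the corresponding vector of leading terms is a syzygy on the $\Lt(f_{j})$; by the paragraph above its $>_{1}$-leading monomial is a multiple of some $x^{\alpha}e^{(1)}_{i}$ with $x^{\alpha}$ minimal in $M_{i}$, and the hypothesis gives us a zero-remainder expression for $x^{\alpha}f_{i}$ which we can use to rewrite $c_{i}$ and strictly lower the maximum — contradicting minimality. Hence $x^{\gamma}\Lt(f_{i})=\Lt(f)$, so $\Lt(f)\in(\Lt(f_{1}),\ldots,\Lt(f_{r}))$, as desired.

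The delicate point — and the step I expect to be the main obstacle — is the bookkeeping that makes the phrase ``rewrite $c_{i}$ and strictly lower the maximum'' precise: one must check that substituting the zero-remainder relation for $x^{\alpha}f_{i}$ into the expression $f=\sum c_{i}f_{i}$ genuinely decreases the chosen maximum with respect to $>_{1}$, which uses crucially that the remainder of $x^{\alpha}f_{i}$ involves only terms with $>_{1}$-leading monomial strictly below $x^{\alpha}e^{(1)}_{i}$, together with the globality of $>_{1}$ (established in the text from globality of $>$) to guarantee that the induced descent terminates. A clean way to package this is to phrase the whole argument as: $\{x^{\alpha}e^{(1)}_{i}: x^{\alpha}\text{ minimal generator of }M_{i}\}$ generates $\Lt_{>_{1}}(\ker\varphi)$, hence $\ker\varphi$ itself is generated by the lifts of the $x^{\alpha}f_{i}$ (this is the module-analogue of Proposition part (2) applied to $\ker\varphi\subset F_{1}$), and then a standard lifting-of-syzygies computation — Schreyer's argument — shows directly that the remainders of all these generators being zero forces $\Lt_{>}(I)=(\Lt(f_{1}),\ldots,\Lt(f_{r}))$. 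I would cite \cite{GP08} for the details of this termination bookkeeping rather than reproduce it in full.
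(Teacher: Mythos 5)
Your proposal is essentially the paper's own argument: both work in $F_1$ with the induced ordering $>_1$, form the test syzygies $G^{(i,\alpha)}$ from the zero-remainder divisions, observe $\Lt_{>_1}(G^{(i,\alpha)})=x^{\alpha}e^{(1)}_i$, and use these to pass from an arbitrary representation $f=\sum a_j f_j$ to one in which the leading contributions $\Lt(g_j)\Lt(f_j)$ have pairwise distinct monomial parts and so cannot cancel, forcing $\Lt(f)\in(\Lt(f_1),\dots,\Lt(f_r))$. The only (cosmetic) difference is that the paper obtains the good representation in one step by applying Theorem~\ref{thm:div-free-modules} inside $F_1$ to divide $\sum a_j e^{(1)}_j$ by the $G^{(i,\alpha)}$, whereas you carry out the equivalent Noetherian descent by hand via a minimality argument on representations; the full description of $\Lt_{>_1}(\ker\varphi)$ that you invoke is really the content of Corollary~\ref{cor:schreyer-syz} (which logically follows the theorem) and is more than the criterion itself needs.
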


\begin{proof} The condition is clearly necessary. To prove that it is sufficient, we consider a division expression 
$x^{\alpha}f_{i}= \sum_{j=1}^{r} g^{(i,\alpha)}_{j} f_{j}$ with remainder zero  which satisfies condition \ref{thm:divrem-1}   of Theorem \ref{thm:div-free-modules}. Then
$$G^{(i,\alpha)}= (-g^{(i,\alpha)}_{1}, \ldots, x^{\alpha}-g^{(i,\alpha)}_{i}, \ldots , -g^{(i,\alpha)}_{r})^{t} \in F_{1}=R^{r}$$
is a syzygy on $f_{1}, \ldots, f_{r}$.
With respect to the induced monomial ordering $>_{1}$ on $F_{1}$, we have $\Lt_{>_1}(G^{(i,\alpha)})= x^{\alpha}e^{(1)}_{i}$. Indeed, precisely one leading term 
$\Lt_>(g_{j}^{(i,\alpha)})f_{j}$ coincides with $x^{\alpha}\Lt_>(f_{i})$ and this $j$ satisfies $i>j$ since condition \ref{thm:divrem-1}  of Theorem~\ref{thm:div-free-modules} 
is satisfied for the $g^{(i,\alpha)}_{j}$. All other terms of a product $g^{(i,\alpha)}_{\ell}\Lt_>(f_{\ell})$ are smaller than $x^{\alpha}\Lt_>(f_{i})$.
Now, consider an arbitrary element $f=a_{1}f_{1}+\ldots,a_{r}f_{r} \in I$, 
and let $\sum_{j=1}^{r} g_{j}e^{(1)}_{j}\in F_{1}$ be the remainder of $\sum_{j=1}^{r }a_{j}e_{j}$ on division by the $G^{(i,\alpha)}$. Then $f=\sum_{j=1}^{r} g_{j}f_{j}$ since the $G^{(i,\alpha)}$ are syzygies on $f_{1},\ldots,f_{r}$. Hence,
$$ 
\Lt_>(f) = \max\{ \Lt_>(g_{j})\Lt(f_{j}) \mid j=1,\ldots, r \} \in (\Lt_>(f_{1}),\ldots, \Lt_>(f_{r}))
$$
since these leading terms have pairwise different monomial parts.
\end{proof}

\noindent
The $G^{(i,\alpha)}$ in the proof are called \emph{Buchberger's test syzygies}.

\begin{corollary}[Schreyer]\label{cor:schreyer-syz} Consider the map $\varphi\colon F_{1}\to F$ defined by $e^{(1)}_{i} \mapsto f_{i}$. If $f_{1}, \ldots, f_{r}$ is a Gröbner basis of $I$ with respect to $>$, then Buchberger's test syzygies form a Gröbner basis of
$\ker( \varphi)$ with respect to $>_1$.
\end{corollary}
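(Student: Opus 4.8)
The plan is to prove the equality of leading modules
$$\Lt_{>_1}(\ker\varphi)\;=\;\bigl(\Lt_{>_1}(G^{(i,\alpha)})\mid i,\ x^{\alpha}\text{ a minimal generator of }M_{i}\bigr),$$
which is precisely what it means for the test syzygies $G^{(i,\alpha)}$ to form a Gröbner basis of $\ker\varphi$ with respect to $>_1$. One inclusion is immediate from the proof of Buchberger's criterion just given: each $G^{(i,\alpha)}$ lies in $\ker\varphi$ and satisfies $\Lt_{>_1}(G^{(i,\alpha)})=x^{\alpha}e^{(1)}_{i}$, so the right-hand module is contained in $\Lt_{>_1}(\ker\varphi)$. (Here one uses the hypothesis that $f_{1},\ldots,f_{r}$ is a Gröbner basis: by Buchberger's criterion the relevant division remainders vanish, so the $G^{(i,\alpha)}$ are indeed defined.)

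For the reverse inclusion I would take an arbitrary $0\neq G=\sum_{j=1}^{r}g_{j}e^{(1)}_{j}\in\ker\varphi$ and read off $\Lt_{>_1}(G)$ from the ``top'' of the identity $\sum_{j}g_{j}f_{j}=0$ in $F$. Let $\mu$ be the $>$-largest among the monomials $\Lt_>(g_{j})\Lt_>(f_{j})$ with $g_{j}\neq 0$, and let $S$ be the set of indices attaining it. A short check shows that every monomial $x^{\delta}$ occurring in $g_{j}$ satisfies $x^{\delta}\Lt_>(f_{j})\leq_> \mu$, with equality forcing $x^{\delta}=\Lt_>(g_{j})$ and $j\in S$; together with the definition of $>_1$ — which orders $x^{\delta}e^{(1)}_{j}$ first by the position of $x^{\delta}f_{j}$ and only breaks ties by index — this identifies $\Lt_{>_1}(G)$ as the term coming from $\Lt_>(g_{k})\,e^{(1)}_{k}$, where $k:=\max S$.

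The key step is then the cancellation argument. Since $\sum_{j}g_{j}f_{j}=0$, the coefficient of the monomial $\mu$ on the left is $\sum_{j\in S}\mathrm{lc}(g_{j})\mathrm{lc}(f_{j})=0$; as this sum contains the nonzero contribution of $j=k$, it must contain a further index $j\in S$, and $j<k$ since $k=\max S$. For this $j$ we have $\Lt_>(f_{j})\mid \mu=\Lt_>(g_{k})\Lt_>(f_{k})$, hence $\Lt_>(g_{k})\in(\Lt_>(f_{1}),\ldots,\Lt_>(f_{k-1})):\Lt_>(f_{k})=M_{k}$. Thus $\Lt_>(g_{k})$ is divisible by some minimal generator $x^{\alpha}$ of $M_{k}$, so $\Lt_{>_1}(G)$ is a multiple of $x^{\alpha}e^{(1)}_{k}=\Lt_{>_1}(G^{(k,\alpha)})$ and lies in the right-hand module. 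This gives $\Lt_{>_1}(\ker\varphi)\subseteq\bigl(\Lt_{>_1}(G^{(i,\alpha)})\bigr)$ and finishes the proof.

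I expect the only genuine obstacle to be the bookkeeping in the middle paragraph: verifying precisely which basis vector and which monomial realize $\Lt_{>_1}(G)$, and checking that no ``cross term'' in a product $g_{j}f_{j}$ can reach the top monomial $\mu$ except through the leading terms of $g_{j}$ and $f_{j}$. Once that is pinned down, the vanishing of the $\mu$-coefficient does the rest essentially for free; this is the classical argument underlying Schreyer's theorem.
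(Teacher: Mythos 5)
Your proof is correct, and it takes a somewhat different (more direct) route than the one the paper implicitly intends. The paper states the corollary immediately after the proof of Buchberger's criterion and does not spell out a separate argument; the implicit proof is to divide an arbitrary $G\in\ker\varphi$ by the $G^{(i,\alpha)}$ and observe that the remainder $h=\sum g_je^{(1)}_j$, being again a syzygy whose components $g_j$ (by the division conditions) have no term in $M_j$, would force the products $\Lt_>(g_j)\Lt_>(f_j)$ to have pairwise distinct monomial parts, so that $\sum g_jf_j\neq 0$ unless $h=0$; hence $G$ reduces to zero and the Gröbner property follows. You instead read off $\Lt_{>_1}(G)$ directly: you identify it as $\Lt_>(g_k)e^{(1)}_k$ with $k$ the largest index attaining the top cancellation monomial $\mu$, then extract from the vanishing of the coefficient of $\mu$ a second index $j<k$ with $\Lt_>(f_j)\mid\mu$, which puts $\Lt_>(g_k)\in M_k$ and hence makes $\Lt_{>_1}(G)$ a multiple of some $\Lt_{>_1}(G^{(k,\alpha)})$. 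Both arguments rest on the same two facts -- that $>_1$ is designed so the leading term of a syzygy is governed by the dominant product $\Lt_>(g_j)\Lt_>(f_j)$, and that a tie at that level with a lower index certifies membership in $M_k$ -- but your version avoids running the division algorithm a second time and establishes the leading-module equality $\Lt_{>_1}(\ker\varphi)=(\Lt_{>_1}(G^{(i,\alpha)}))$ outright, which is arguably cleaner for this specific corollary. The bookkeeping step you flag (that no cross-term of a product $g_jf_j$ can reach $\mu$ except through the two leading terms) is indeed the only delicate point, and it checks out: compatibility of the module ordering with multiplication gives $x^\delta\nu\le x^\delta\Lt_>(f_j)\le\Lt_>(g_j)\Lt_>(f_j)\le\mu$ for any monomial $x^\delta$ of $g_j$ and any monomial $\nu$ of $f_j$, with equality throughout only at the stated leading terms and $j\in S$.
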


\noindent
The corollary is the starting point for Schreyer's syzygy algorithm.

\begin{theorem}[Hilbert's syzygy theorem] Every finitely generated module $M$ over $R=k[x_{1},\ldots,x_{n}]$ has a finite free resolution
$$0 \leftarrow M \leftarrow F_{0} \leftarrow F_{1} \leftarrow \ldots \leftarrow F_{c} \leftarrow 0 $$
of length $c \le n$.
\end{theorem}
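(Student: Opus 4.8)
The plan is to construct a free resolution of $M$ by iterating Schreyer's syzygy construction from Corollary \ref{cor:schreyer-syz}, exploiting a careful choice of the monomial orderings — and of the order in which the Gröbner basis elements are listed at each stage — so that the leading terms of the iterated syzygy modules become \emph{simpler} at every step, forcing the process to terminate at a free module after at most $n$ steps. A graded version (resolution by graded free modules when $M$ is graded) would follow from the same argument by keeping track of degree shifts.

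To set up, write $F_{-1}:=M$ and recursively choose surjections $F_j\twoheadrightarrow K_j:=\ker(F_{j-1}\to F_{j-2})$; each $K_j$ is a submodule of a finitely generated free module, hence finitely generated (Hilbert's basis theorem), so such $F_j$ exist. The issue is to control leading terms. Fixing a global monomial ordering on $F_0$, compute a Gröbner basis $f_1,\dots,f_r$ of $K_1\subset F_0$; then the map $F_1:=R^r\to F_0$, $e^{(1)}_i\mapsto f_i$, has kernel $K_2$, and by Corollary \ref{cor:schreyer-syz} the Buchberger test syzygies form a Gröbner basis of $K_2$ with respect to the induced ordering $>_1$, their leading terms being exactly the monomials $x^\alpha e^{(1)}_i$, where $x^\alpha$ ranges over the minimal generators of $M_i=(\Lt(f_1),\dots,\Lt(f_{i-1})):\Lt(f_i)$.

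The combinatorial heart of the argument is this. Reorder $f_1,\dots,f_r$ — which does not affect being a Gröbner basis — so that within each position class $\{i:\Lt(f_i)\in R\,e_k\}$ the exponent of $x_1$ in $\Lt(f_i)$ is weakly increasing in $i$. Then $M_i$ is generated by the colon ideals $\Lt(f_j):\Lt(f_i)$ over $j<i$ with $\Lt(f_j),\Lt(f_i)$ in the same position; each such colon equals $(x^{a_j}):(x^{a_i})$ for the corresponding exponent vectors, and since $a_{j,1}\le a_{i,1}$ it lies in $k[x_2,\dots,x_n]$. Hence no minimal generator of any $M_i$ involves $x_1$, so $K_2$ has a Gröbner basis whose leading terms involve only $x_2,\dots,x_n$. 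Feeding this Gröbner basis back into Corollary \ref{cor:schreyer-syz}, the ideals $M_i$ at the next stage are colons of monomials already free of $x_1$; re-sorting within position classes by the exponent of $x_2$ and repeating the computation with $x_2$ in place of $x_1$ produces a Gröbner basis of $K_3$ whose leading terms involve only $x_3,\dots,x_n$. Inductively, $K_j$ admits a Gröbner basis whose leading terms involve only $x_j,x_{j+1},\dots,x_n$.

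To finish, apply this at $j=n$: after discarding redundant elements, $K_n$ has a minimal Gröbner basis $h_1,\dots,h_m$ with $\Lt(h_i)$ a scalar times $x_n^{a_i}e_{k_i}$. Minimality forces the positions $k_i$ to be pairwise distinct — otherwise one leading term would divide another — and then a leading-term cancellation argument shows the $h_i$ are $R$-linearly independent, so $K_n$ is free. Taking $F_n:=K_n$ with the inclusion $F_n\hookrightarrow F_{n-1}$ gives $\ker(F_n\to F_{n-1})=0$, so $0\leftarrow M\leftarrow F_0\leftarrow\cdots\leftarrow F_n\leftarrow 0$ is a free resolution of length $\le n$ (and shorter if some $K_j$ vanishes earlier). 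The main obstacle is the bookkeeping in the inductive step: one must check that the induced orderings compose correctly through the iteration and that re-sorting to eliminate $x_j$ at stage $j$ cannot reintroduce any of $x_1,\dots,x_{j-1}$ — which is precisely why the colon-ideal description of the $M_i$ is essential — together with the routine but necessary minimalization of the Schreyer bases along the way.
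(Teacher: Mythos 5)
Your proof is correct and follows essentially the same route as the paper: iterate Schreyer's Corollary~\ref{cor:schreyer-syz}, re-sorting the Gröbner basis at each stage so that the colons $(\Lt f_j):\Lt f_i$ for $j<i$ in the same position class avoid one more variable per step, and observe that once all leading terms are of the form $x_n^{a_i}e_{k_i}$, minimality forces the $k_i$ to be distinct and the module to be free. Your explicit sorting rule (within each position class, weakly increasing exponent of the targeted variable at stage $j$) is phrased differently from the paper's (position, then ascending degree, then descending degrevlex), but it implements the same idea and has the advantage of making the termination after $n$ steps — which the paper only asserts — transparent from the colon-ideal description of the $M_i$.
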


\begin{proof}[Schreyer] Since $M$ is finitely generated, there exists a surjection $\psi\colon F_{0} \to M$ of a free module $F_{0}=R^{r_{0}}$ onto $M$. Choose a global monomial ordering $>_{0}$ on $F_{0}$ and compute a Gröbner basis $f_{1},\ldots, f_{r_{1}}$ of $\ker(\psi)$. 
By the corollary, Buchberger's test syzygies form a Gröbner basis of $\ker(\varphi_{1})$, where 
$\varphi_{1}\colon F_{1}=R^{r_{1}} \to F_{0}$ is defined by $e^{(1)}_{i} \mapsto f_{i}$. Next, we can consider the induced monomial ordering $>_{1}$ on $F_{1}$ and the Buchberger test syzygies among the $G^{(i,\alpha)}$ to obtain a surjection $\varphi_{2}:F_{2} \to \ker(\varphi_{1})$. Repeating this process yields a free resolution of $M$.
Note that the result of the process depends, in particular, on how the Gröbner basis elements at each step are sorted. Sort the $f_{j}$ such that $j<i$ holds if for $\Lt(f_{j})=ax^{\alpha(j)}e_{\ell(j)}$ the following condition is satisfied: 
\begin{align*}
&\ell(j) < \ell(i) \hbox{ or }&& \\ &(\ell(j) =\ell(i) \hbox{ and } & \deg x^{\alpha(j)} < \deg x^{\alpha(i)}& \hbox{ or } &\\
&&( \deg x^{\alpha(j)} = \deg x^{\alpha(i)}& \hbox{ and } &x^{\alpha(j)} >_{\drlex} x^{\alpha(i)} ))
\end{align*}
Sort the Buchberger test syzygies at each step similarly. With this specification, the process stops after at most $n$ steps. 
\end{proof}

\begin{example} \label{syzygiesOfFivePoints}
Consider the ideal $J \subset S=k[w,x,y,z]$ generated by the entries of the first column in the following table.

\vspace{0.3cm}
\begin{center}
\begin{tabular}{|c|| c | c | c | c| c| c |}\hline
${\color{red}w^{2}}-xz$ & $-x$ & $ y$ & $0$ & $-z$ & $0$ & $-y^{2}+wz$ \cr
${\color{red}wx}-yz$      & ${\color{red}w}$ & $-x$&$-y$&$0$&$z$&$z^{2}$ \cr
${\color{red}x^{2}}-wy$ & $-z$ &${\color{red}w}$ & $0$ & $-y$ & $0$& $0$\cr
${\color{red}xy}-z^{2}$ & $0$& $0$ & ${\color{red}w}$ & ${\color{red}x}$& $-y$ &$-yz$ \cr
${\color{red}y^{2}}-wz$ & $0$& $0$& $-z$& $-w$ & ${\color{red}x}$ & ${\color{red}w^{2}}$ \cr\hline\hline
& $0$ & $y$ & $-x$& ${\color{red}w}$& $-z$& $1$ \cr
& $-y^{2}+wz$ &$ z^{2}$& $-wy$& $yz$ &$-w^{2}$& ${\color{red}x}$ \cr\hline
\end{tabular}  
\end{center}

\vspace{0.3cm}\noindent
The original generators turn out to be a degree reverse lexicographic Gr\"obner basis of $J$, and the algorithm produces a free resolution of shape
$$ 
\xymatrix{0 &\ar[l] S/J&\ar[l] S& \ar[l]_{\varphi_{1}} S^{5} &\ar[l]_{\varphi_{2}} S^{6} & \ar[l]_{\varphi_{3}} S^{2} &\ar[l] 0},
$$
with matrices
\begin{tabular}{|c| c|}\hline
$\varphi_{1}^{t}$ & $\varphi_{2}$ \cr\hline
&$ \varphi_{3}^{t}$ \cr \hline
\end{tabular} 
as in the table. \end{example}

In the graded case, one keeps track of the grading. Let $S=k[x_{0},\ldots,x_{n}]$ be the standard graded polynomial ring. The twisted module
$S(e)=\oplus_{d} S_{d+e}$ is the free $S$-module with the generator $1 \in S_{0}=S(e)_{-e}$ in degree $-e$.
Requiring that the matrices in the resolution respect the grading, we obtain in the example above a graded complex
$$ 
0 \leftarrow S/J\leftarrow S\leftarrow S(-2)^{5} \leftarrow S(-3)^{5}\oplus S(-4) \leftarrow S(-4)\oplus S(-5) \leftarrow 0.
$$

We can compute this resolution and access the graded free modules and differentials of it as follows\footnote{There are different strategies for
computing free resolutions. In \OSCAR, the default way of doing this is Schreyer's algorithm which is based on Corollary \ref{cor:schreyer-syz}.}:

\inputminted{jlcon}{exres.jlcon}

Hilbert's original motivation for the syzygy theorem was to prove  the polynomial nature of the Hilbert function.
Let $M=\sum_{d\in \ZZ} M_{d}$ be a finitely generated graded $S$-module. The function 
$$H_{M}\colon \ZZ \to \ZZ, d \mapsto \dim_{k} M_{d},$$
is called the \emph{Hilbert function} of $M$.
If 
$$
0 \leftarrow M \leftarrow F_{0} \leftarrow F_{1} \leftarrow \ldots  \leftarrow F_{c} \leftarrow 0
$$
is a finite graded free resolution with, say, $F_{i}= \bigoplus_{j} S(-j)^{\beta_{ij}}$, then
\begin{equation}\label{hilbFormel}
H_{M}(d)= \sum_{i=0}^{c} (-1)^{i}\sum_{j}\beta_{ij}{d+n-j \choose n}
\end{equation}
since $\dim_{k} S_{d}={n+d \choose n}$.
Interpreting the binomial coefficients as a polynomial ${t+n-j \choose n}=\frac{1}{n!}\prod_{\ell=1}^{n} (t+\ell-j)$, the same formula
defines a polynomial $P_{M}(t) \in \QQ[t]$ such that $H_{M}(d)=P_{M}(d)$ for all $d\gg 0$. We call
$P_{M}(t)$  the \emph{Hilbert polynomial} of $M$.

\begin{theorem} Let $I \subset S$ be a homogeneous ideal. Then $V(I) \subset \PP^{n}$ has dimension $r$ iff the Hilbert polynomial 
$P_{S/I}(t) \in \QQ[t]$ has degree $r$. In that case, $P_{S/I}(t)$ has the form
$$P_{S/I}(t)= d \frac{t^{r}}{r!} + \hbox{ lower degree terms },$$
for some integer $d >0$.
\end{theorem}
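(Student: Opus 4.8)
The plan is to realize $S/I$, after a generic linear change of coordinates, as a module-finite extension of a polynomial subring, and then to read off $P_{S/I}$ from a free resolution over that subring by means of \eqref{hilbFormel}. I may assume $V(I)\neq\emptyset$ — otherwise $S/I$ is finite-dimensional over $k$, so $P_{S/I}=0$ and the statement holds with the conventions $\dim\emptyset=-1=\deg 0$ — and I set $r:=\dim V(I)\ge 0$ and $c:=n-r$.

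First I would produce a graded Noether normalization. View $S=K[x_0,\ldots,x_n]$ as a polynomial ring in $n+1$ variables and $V(I)$ via its affine cone $\widehat V\subset\AA^{n+1}$. By the remark following Theorem \ref{GBDimensionCriterion}, after a general linear change of coordinates — harmless, since it is a graded automorphism of $S$ and leaves $P_{S/I}$ unchanged — I may assume $\rad(\Lt_{\degrevlex}(I))=(x_0,\ldots,x_{c-1})$, where $c=n-r$ is forced by the dimension formula in that theorem (applied with $n+1$ in place of $n$, to $\widehat V$, noting $\dim\widehat V=r+1$). The theorem then provides a finite surjective projection $\widehat V\to\AA^{r+1}$ onto the last $r+1$ coordinates; passing to coordinate rings, and using that the nilpotents $\rad(I)/I$ of $S/I$ are killed by a power of $\rad(I)$, this exhibits $S/I$ as a finitely generated module over the graded polynomial subring $T:=K[x_c,\ldots,x_n]$, into which it embeds, with $\dim T=r+1=\dim(S/I)$.

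Next, by Hilbert's syzygy theorem over $T$ (a polynomial ring in $r+1$ variables) I would choose a finite graded free resolution $0\leftarrow S/I\leftarrow G_0\leftarrow\cdots\leftarrow G_p\leftarrow 0$ over $T$, with $G_i=\bigoplus_j T(-e_{ij})^{\beta_{ij}}$. Since $\dim_K T_m=\binom{m+r}{r}$, the computation behind \eqref{hilbFormel}, run over $T$, gives for $d\gg 0$
$$H_{S/I}(d)=\sum_i(-1)^i\sum_j\beta_{ij}\binom{d-e_{ij}+r}{r},$$
so $P_{S/I}(t)=\sum_i(-1)^i\sum_j\beta_{ij}\binom{t-e_{ij}+r}{r}$ is a polynomial of degree $\le r$ whose coefficient of $t^r$ equals $\frac{1}{r!}\sum_i(-1)^i\sum_j\beta_{ij}$. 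Tensoring the resolution over $T$ with $L:=\operatorname{Frac}(T)$, which is flat over $T$, produces a complex of finite-dimensional $L$-vector spaces with homology concentrated in degree $0$, equal to $L\otimes_T(S/I)$; comparing Euler characteristics identifies $\sum_i(-1)^i\sum_j\beta_{ij}$ with $\rho:=\dim_L(L\otimes_T(S/I))$. Hence the coefficient of $t^r$ in $P_{S/I}$ is $\rho/r!$.

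It remains to see $\rho>0$: if $\rho=0$ then $L\otimes_T(S/I)=0$, so the finitely generated $T$-module $S/I$ is torsion, hence annihilated by a single nonzero $f\in T$, hence module-finite over $T/(f)$, forcing $\dim(S/I)\le\dim(T/(f))<r+1$ and contradicting the previous step. Thus $\rho$ is a positive integer, $P_{S/I}$ has degree exactly $r=\dim V(I)$ with leading term $\rho\,t^r/r!$, and one takes $d:=\rho$; the converse is then immediate, since if $\deg P_{S/I}=r$ and $r':=\dim V(I)$, the part just proved gives $\deg P_{S/I}=r'$, whence $r=r'$. The only genuinely non-routine point is this positivity — equivalently, that a module-finite algebra over a polynomial ring of equal Krull dimension cannot be torsion; the rest is bookkeeping with \eqref{hilbFormel} and with dimension theory already in place. (If moreover $\Lt_{\degrevlex}(I)$ is generated by monomials in $K[x_0,\ldots,x_{c-1}]$ — the additional conclusion of Theorem \ref{GBDimensionCriterion} — one can bypass the resolution: then $S/\Lt_{\degrevlex}(I)\cong A\otimes_K K[x_c,\ldots,x_n]$ with $A:=K[x_0,\ldots,x_{c-1}]/(\Lt_{\degrevlex}(I)\cap K[x_0,\ldots,x_{c-1}])$ finite-dimensional over $K$, giving directly $P_{S/I}(t)=\sum_e(\dim_K A_e)\binom{t-e+r}{r}$ with leading coefficient $(\dim_K A)/r!>0$.)
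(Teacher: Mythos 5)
The paper states this theorem without proof — it is one of the results quoted from the textbooks referenced in the introduction — so there is no argument in the paper to compare against; your Noether-normalization proof stands on its own, and it is correct. The structure fits naturally with the Gröbner-basis toolkit the paper sets up: a generic coordinate change and Theorem~\ref{GBDimensionCriterion} applied to the affine cone supply a graded finite extension $T := K[x_c,\ldots,x_n] \hookrightarrow S/I$ with $\dim T = r+1$, and the Euler-characteristic computation over $T$ — localize at $L := \mathrm{Frac}(T)$ and use flatness — produces the leading coefficient $\rho/r!$ with $\rho = \dim_L\bigl(L\otimes_T S/I\bigr)$, which is then shown to be a positive integer.

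Two small remarks. For the positivity of $\rho$ you could short-circuit the dimension argument: since $T\hookrightarrow S/I$ and localization is exact, $L\hookrightarrow L\otimes_T(S/I)$, whence $\rho\ge 1$ immediately. And in the final parenthetical, the monomial condition on $\Lt_{\degrevlex}(I)$ is an additional \emph{hypothesis} of Theorem~\ref{GBDimensionCriterion}, not a conclusion; it forces $S/I$ to be Cohen--Macaulay and so is not available for arbitrary $I$, which means the tensor-product shortcut you sketch covers only that special case, while the free-resolution argument is the one needed in general.
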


If $A \subset \PP^{n}$ is a projective algebraic set, we apply this to the
homogeneous coordinate ring $K[A]=S/\II(A)$, calling $P_{A}(t):=P_{K[A]}(t)$ the \emph{Hilbert polynomial} of $A$. The integer $d$ is then called the \emph{degree} of $A$. This degree has a geometric interpretation: 
Bertini's theorem (see \cite{Hartshorne1977}) implies that the degree of an $r$-dimensional algebraic set coincides with the number of intersection points of $A$ 
with a general linear subspace $\PP^{n-r}$ of complementary dimension. 
The \emph{arithmetic genus} of $A$ is defined to be 
$P_a(A) = (-1)^r(P_{K[A]}(0)-1)$. 

The geometric genus $p_g$ of a smooth irreducible projective curve $C \subset \PP^{n}$ can also be read off its Hilbert polynomial:
$$P_{C}(t) = d t +1 - p_g(C),$$ 
where $d=\deg C$. This is an easy consequence of the Riemann--Roch theorem.

\begin{example} A smooth complete intersection $C$ of two quadrics in $\PP^{3}$ is a curve of degree $d=4$ and geometric genus $p_g(C)=1$. Indeed, the homogeneous coordinate ring $K[C] = S/\II(C)$
has the free resolution
$$
0 \leftarrow S/\II(C) \leftarrow S \leftarrow S(-2)^{2}\leftarrow S(-4)\leftarrow 0,
$$
so that 
$$p_{C}(t)={t+3 \choose 3}-2{t+1 \choose 3}+{t-1 \choose 3} =4t.$$
\vspace*{1mm}

In \OSCAR, we get:
\inputminted{jlcon}{hilbert-polynomial.jlcon}
\end{example}

A finitely generated graded module $M$ has a minimal graded free resolution which is obtained by choosing a minimal set of homogeneous generators of $M$ and all its syzygy modules $\ker(\varphi_{i})$. 
 A resolution with differentials $\varphi_{i}\colon F_{i}\to F_{i-1}$ for $i\ge 1$ is minimal iff the entries of the matrices $\varphi_{i}$ are contained in the homogeneous maximal ideal $(x_{0},\ldots,x_{n})$. Such a minimal resolution for $M$ is unique up to isomorphism and the numbers $\beta_{ij}$ appearing in the resolution are called the \emph{graded Betti numbers of $M$}.
 These numbers are numerical invariants of $M$ which refine the Hilbert function and the Hilbert polynomial as described in Equation \ref{hilbFormel}.
 
 More generally, we may speak of the graded Betti numbers of an arbitrary graded free resolution. It is convenient to display these numbers in a 
\emph{Betti table} $(b_{ij})$,  with $b_{ij}=\beta_{i,i+j}$.
 
 \begin{example} The resolution of the ideal $J$ found in Example \ref{syzygiesOfFivePoints} is not minimal\footnote{Computing graded free resolutions with Schreyer's algorithm
is typically quite efficient, but the result may be far from being minimal.}: Note
 that $1$ is an entry of the matrix giving $\varphi_{3}$. The \OSCAR computation below shows the corresponding Betti table as well as the table of minimal 
Betti numbers of $J$ (the symbol \mintinline{jl}{-} refers to a zero entry):

\inputminted{jlcon}{exres2.jlcon}

Note that  $\beta_{2,4}=b_{2,2}$ cancels against $\beta_{3,4}=b_{3,1}$.
 
 Currently, the fastest method to compute the minimal Betti numbers of a graded module $M$ is to compute a not necessarily
minimal resolution as in Example \ref{syzygiesOfFivePoints} above, and then use this resolution to compute the  dimensions of 
the Tor-groups 
 $$
 \beta_{ij}= \dim_{k} \Tor_{i}^{S}(M,k)_{j} = \dim_{k} (H_{i}(F_{*}\otimes_{S}k))_{j}.
$$ 
\end{example}

To find free resolutions, Schreyer's algorithm first computes a Gröbner basis of the given submodule (ideal),
starting from the given generators. In particular, the algorithm computes the syzygies on the Gröbner basis elements. For 
many  applications, however, it is necessary to compute the syzygies on the given set of generators. This requires some 
bookkeeping when performing Buchberger's algorithm:\\

\noindent
\textbf{Algorithm} (Computation of Syzygies)$\quad$ \\
\Input Vectors $f_{1},\ldots, f_{r} \in F$.\\
\Output A matrix $\psi \in R^{r\times t}$ whose columns generate the kernel of the $R$-module
homomorphism
$$\varphi: R^{r} \to F, e_{i} \mapsto f_{i}.$$

\begin{enumerate}
\item Choose a global monomial ordering on $F$ and compute a Gr\"obner basis $$f_{1},\ldots,f_{r},f_{r+1}, \ldots, f_{r'}$$ of 
$(f_{1},\ldots,f_{r})$, keeping track of the Buchberger test syzygies $G^{(i,\alpha)}.$
 \item Sort the $G^{(i,\alpha)}$ such that the test syzygies which  produced new Gröbner basis elements come first. 
\item[3.] 
The matrix with columns $G^{(i,\alpha)}$ has now shape
 $$\psi'  = \begin{pmatrix} A & B \cr
 C & D \cr
\end{pmatrix},
\hbox{ where }C=\begin{pmatrix}1 & & * \cr
 & \ddots & \cr
 0& & 1\cr
\end{pmatrix}$$ 
is an $(r'-r)\times (r'-r)$ upper triangular square matrix with all diagonal entries being $1$.
Return 
$$ \psi=B - AC^{-1}D.$$
\end{enumerate}
Note that one can compute $C^{-1}$ by applying row operations to the matrix
$(E|C)$ to obtain $(C'| E)$. The inverse matrix $C'=C^{-1}$ has entries in $R$.\\

The compution of kernels of morphisms between modules is central to many homological constructions in \OSCAR.\\

\noindent
\textbf{Algorithm} (Computation of Kernels)$\quad$\\ %
\Input An $R$-module homomorphism $\varphi\colon M \to N$ between finitely presented $R$-modules given by a commutative diagram 
of type 
\hspace{0.55cm} 
$$
\xymatrix{ R^{r_{1}}\ar[d]^{\varphi_{1}} \ar[r]^{\phi} & R^{r_{0}}\ar[d]^{\varphi_{0}} \ar[r] & M\ar[r]\ar[d]^{\varphi} & 0 \cr
R^{s_{1}} \ar[r]^{\psi} & R^{s_{0}} \ar[r] & N\ar[r] & 0\;. \cr
}
$$
\Output A presentation matrix $C$ of $\ker(\varphi)$.\\
\begin{enumerate}
\item  Compute the syzygy matrix $\begin{pmatrix} A\cr B  \end{pmatrix}$ of $(\varphi_{0}|\psi)$.
\item  Compute the syzygy matrix $\begin{pmatrix} C\cr D  \end{pmatrix}$ of $(A|\phi)$.
\item  Then $C$ is a presentation matrix of $\ker(\varphi)$:
$$
\xymatrix{ R^{t_{1}} \ar[r]^{C} &R^{t_{0}} \ar[r] & \ker(\varphi)\ar[r] & 0.  \cr
}
$$
\end{enumerate}

It is an important design feature of \OSCAR that modules are represented as subquotients rather than being given by free representations.

\begin{definition} Let $A\colon R^a \to R^{s}$ and $B \colon R^{b} \to R^{s}$ be two morphisms between free $R$-modules with the same codomain.
The \emph{subquotient} defined by $A$ and $B$ is
$$ \subquo(A,B)= \frac{ \imF(A)+\imF(B)}{\imF(B)}\cong \frac{\imF(A)}{\imF(A)\cap\imF(B)}.$$
In the \OSCAR manual, we refer to
\begin{itemize}
\item the common codomain $R^s$ as the \emph{ambient free module} of $M$,
\item the images of the basis vectors of $R^a$ in $R^s$ as the \emph{ambient representatives of the generators} of $M$, and
\item the images of the basis vectors of $R^b$ in $R^s$ as the \emph{relations} of $M$.
\end{itemize}
\end{definition}

\begin{example}(Touching Surfaces)\label{touchingSurfaces} Consider the polynomials
$$ f=\det \begin{pmatrix} x_{3} & x_{2} \cr x_{2} & x_{1}\end{pmatrix} \hbox{ and }
g=\det \begin{pmatrix} x_{3} & x_{2} & x_{1} \cr 
			       x_{2} & x_{1} & x_{0} \cr
			       x_{1} & x_{0} & 0 \end{pmatrix} \in S=K[x_{0},\ldots,x_{3}].$$
See Figure~\ref{fig:touching} for an illustration of the corresponding surfaces.
 
\begin{figure}[ht]
\centering
  \includegraphics[scale=0.1]{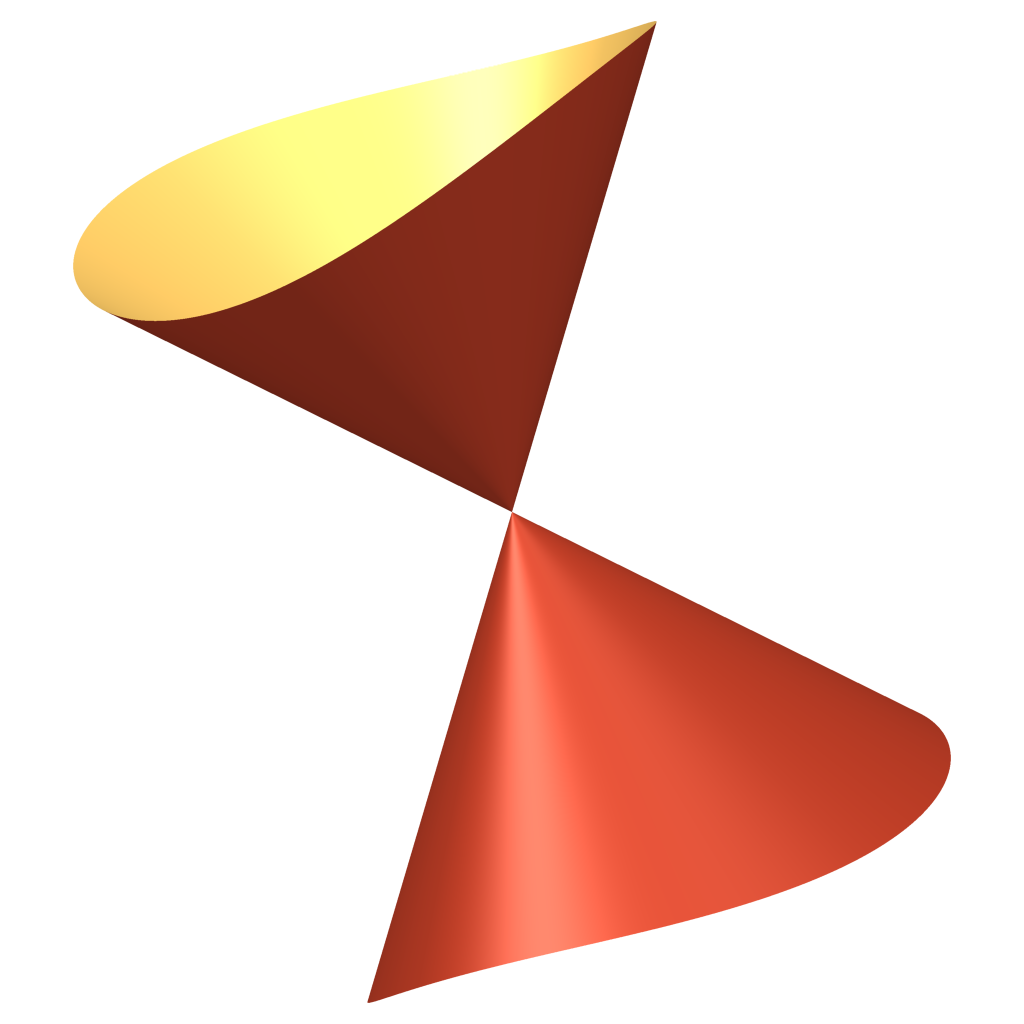}
  \includegraphics[scale=0.1]{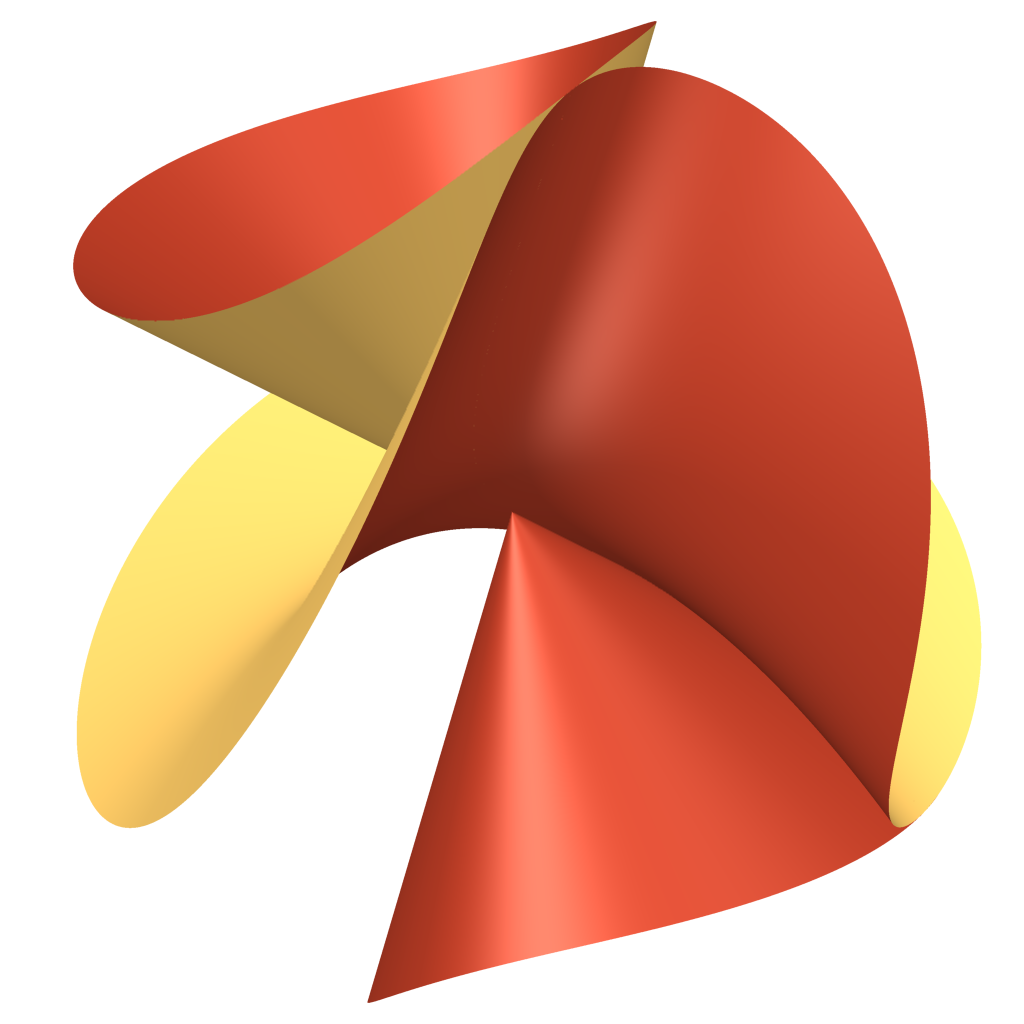}
  \includegraphics[scale=0.1]{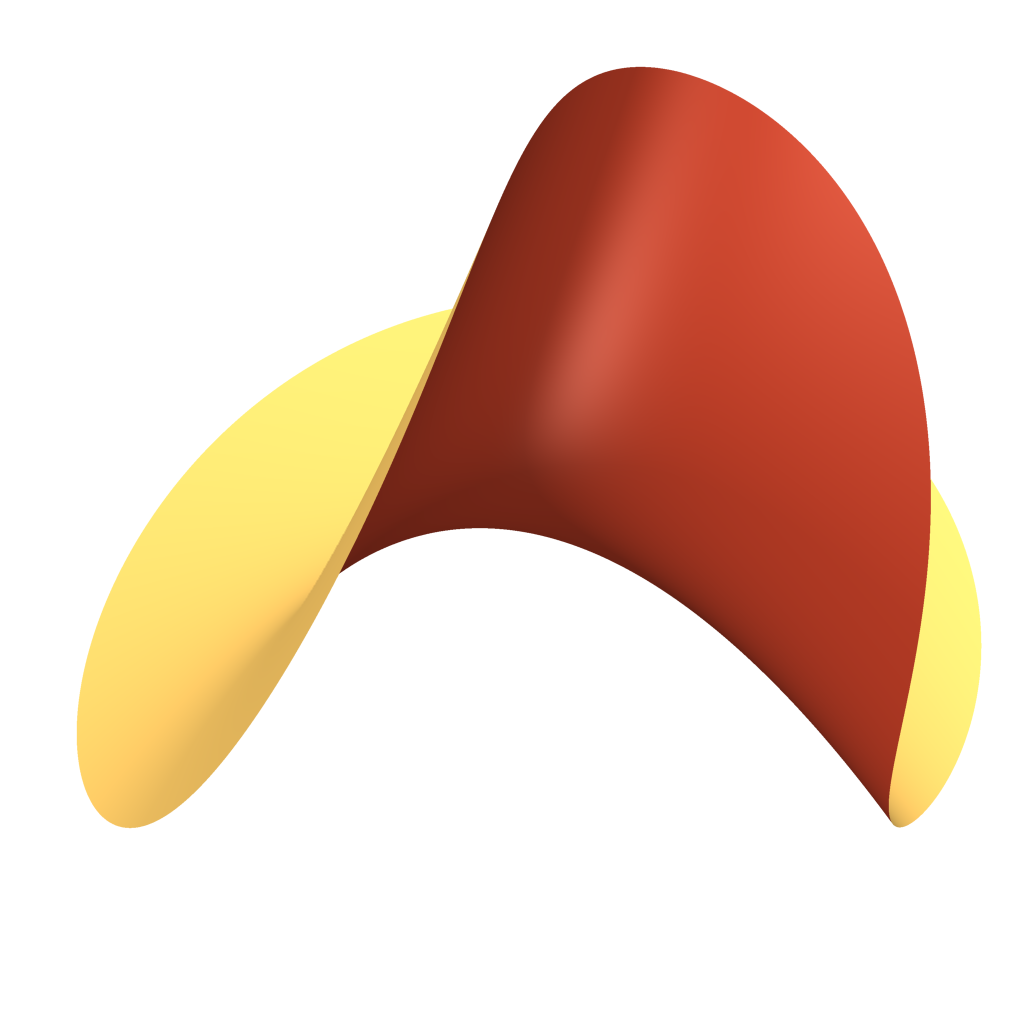}
  \caption{Touching surfaces.}
  \label{fig:touching}
\end{figure}

\noindent
With respect to $>_\degrevlex$, we have $\Lt(f) = -x_{2}^{2}$ and $\Lt(g) = -x_{1}^{3}$: 
\inputminted[firstline=1,lastline=18]{jlcon}{ex314.jlcon}
\noindent
Thus, $f,g$ form a Gr\"obner basis of $J = (f,g)$, $J$ is unmixed of dimension one by Theorem~\ref{GBDimensionCriterion}, and $A = V (J )\subset \PP^{3}$ is a curve. The twisted cubic curve $C \subset \PP^{3}$, which is defined by the $2\times 2$-minors of the matrix
$$\begin{pmatrix} x_{3} & x_{2} & x_{1} \cr 
			       x_{2} & x_{1} & x_{0} 
			        \end{pmatrix},$$
is a component of $A$. Thus, $\pp_{1}=(x_{1}^{2}-x_{0}x_{2},x_{1}x_{2}-x_{0}x_{3},x_{2}^{2}-x_{1}x_{3})$ is one of the associated primes of $J$. Actually, $\rad(J)=\pp_{1}=\II(C)$:
\inputminted[firstline=22,lastline=25]{jlcon}{ex314.jlcon}
\noindent
\noindent
The intersection multiplicity of the two surfaces along $C$ is computed by counting the number of times $\pp_1$ occurs in a filtration of the module $M=S/J$ via primes (see \cite[Ch. I, Sec. 7]{Hartshorne1977}). To compute such a filtration, we first construct the graded $S$-module $\Hom(S/\pp_{1}, S/J)$:
\inputminted[firstline=27,lastline=59]{jlcon}{ex314.jlcon}
\noindent
Actually, $\Hom(S/\pp_{1}, S/J)$ is generated in degree $2$ with generators corresponding to multiplication by $x_{0}x_{2}-x_{1}^{2}$ and $x_{0}x_{3}-x_{1}x_{2}$, respectively: 
\inputminted[firstline=61,lastline=74]{jlcon}{ex314.jlcon}
\noindent
As the first step towards the desired filtration, we may consider
$$M_{1} =S/\pp_{1}(-2) \hookrightarrow M=S/J,$$
where the inclusion is induced by multiplication with $x_{0}x_{2}-x_{1}^{2}$:
\inputminted[firstline=78,lastline=79]{jlcon}{ex314.jlcon}
\noindent
The annihilator of the image of $x_{1} x_{2}-x_{0} x_{3}$ in $M/M_{1}$ is
the prime ideal $\pp_{2} = (x_{0}, x_{1}, x_{2})$:
\inputminted[firstline=81,lastline=103]{jlcon}{ex314.jlcon} 
\noindent
 Now, we may take $M_{2}\subset M$ as the preimage under the projection map $M\rightarrow M/M_1$ of the image of the map $S/\pp_{2}(-2) \hookrightarrow M/M_{1}$ and $M_{3}=M$ because $M/M_{2} \cong S/\pp_{1}$:
 \inputminted[firstline=107,lastline=129]{jlcon}{ex314.jlcon} 
 \noindent
 So in the filtration
 $$ 0 \subset M_{1} \subset M_{2} \subset M_{3}=M,$$
 the prime ideal $\pp_{1}$ occurs twice and $\pp_{2}$ occurs once. We conclude: The surfaces $Q = V (f )$ and $H = V (g)$ intersect with multiplicity $2$ along $C$, 
that is, they touch each other tangentially along C. 
There are no further intersections since $\deg Q \cdot \deg H = 2\cdot 3$ and $2\deg C =2\cdot 3$ coincide.
Note that $\pp_2$ is not an associated prime ideal of $M$, since $J$ is unmixed.
 In this case, it is not possible to find a filtration where
 only  associated primes of $M$ occur.
\end{example}

\section{Applications to Surfaces in $\PP^4$}

Ellingsrud and Peskine \cite{EP89} proved that there are only finitely many components of the Hilbert scheme of  $\PP^4$ 
whose general points correspond to smooth surfaces of Kodaira dimension $<2$, that is, to surfaces of non-general type.
In particular, this confirms a conjecture of Hartshorne about smooth rational surfaces in $\PP^4$. Referring to \cite{DES, DS00} 
for a unified approach to constructing non-general type surfaces in $\PP^4$, we illustrate this approach by giving two 
examples of rational surfaces in $\PP^4$. Typically, the approach requires that we find graded modules with particular syzygies.
In the first example below, the required module was found by a random computer algebra search in small characteristic. In Section 
\ref{Deformations}, we will discuss how to infer from this that smooth surfaces with the same invariants exist over $\CC$. 
For the general theory of compact complex surfaces see \cite{BPV84}.

\begin{example}[A Rational Surface in $\PP^4$ 
\cite{zbMATH00943474}]\label{ratd=11pi=10}
Consider the ideal \mintinline{jl}{m} defined below and its minimal free resolution:

\inputminted{jlcon}{char3-surface-1.jlcon}
\noindent
The minimal free resolution gives rise to a subcomplex 
$$ G: 0 \leftarrow S \leftarrow S^{10}(-2) \leftarrow S^{10}(-3) \leftarrow S^{2}(-4) \leftarrow 0 $$
whose homology $H_{1}(G)\cong I_{X}(2)$  is isomorphic to the twist of the homogeneous ideal $I_{X}$ of a smooth surface $X$ of degree $d=11$ and sectional genus $\pi=11$,
with geometric genus and irregularity $p_{g}(X)=q(X)=0$. Here, the \emph{irregularity} of $X$ is the difference $q(X) = p_{g}(X) - p_{a}(X)$.

We create the surface in \OSCAR:
   \inputminted[lastline=55]{jlcon}{char3-surface-2.jlcon}
\noindent
So the resulting ideal is generated by 5 quintics and 7 sextics.
It defines a surface of the desired degree. See the example of the Alexander surface below for how to check smoothness and for how to compute the sectional genus.

We would now like to determine how this surface fits into the Enriques-Kodaira classification of smooth projective surfaces. For this, we apply the adjunction process of Van de Ven and Sommese \cite{SV87}.
Recall that a smooth projective surface arises from a minimal smooth projective surface $X_{\textrm{min}}$ by repeatedly blowing-up a point. The exceptional curve $E$ of the blow-up is a so called \emph{$(-1)$-curve}. Such a curve is rational, $E \cong \PP^{1}$, with self-intersection number $E^{2}=-1$. A surface is \emph{minimal} iff it contains no $(-1)$-curves.

Let $X \subset \PP^{n}$ be a smooth projective surface over $\CC$ of codimension $c$. Let $S$ and $S_{X}$ denote the homogeneous coordinate rings of $\PP^{n}$ and $X$, respectively.
Consider $\omega_{X}=\Ext^{c}_{S}(S_{X},S(-\dim S)),$
the graded \emph{dualizing module} of $S_{X}$. A basis of $(\omega_{X})_{{1}}$
corresponds to the linear system $|K_X +H|$, where $K_X$ is a canonical divisor on $X$ and $H$ is the hyperplane class. This linear system defines a birational morphism 
$\varphi_{|K_X+H|}\colon X \to X'$ onto another smooth projective surface $X'$ which blows down precisely all $(-1)$-lines on $X$ unless
\begin{enumerate}
\item $X \subset \PP^{n}$ is a linearly or quadratically embedded $\PP^{2}$ or $X$ is ruled by lines; hence $|K_X+H| = \emptyset$ holds,
\item $X\subset \PP^{n}$ is an anti-canonically embedded del Pezzo surface, in which case $\varphi_{|K_X+H|}$ maps $X$ to a point,
\item $X\subset \PP^{n}$ is a conic bundle, in which case  $\varphi_{|K_{X}+H|}\colon X \to B$ maps $X$ to a curve $B$ and the fibers of $\varphi_{|K_{X}+H|}$ are the conics, or
\item\label{badLuck} $X\subset \PP^{n}$ is a surface in one of four families identified by Sommese and Van de Ven \cite{SV87}, and $\varphi_{|K_X+H|}\colon X \to X'$ is not birational, but finite to one.
\end{enumerate}

Unless we are in one of the exceptional cases, a $(-1)$-conic $C$ in $X$ is mapped to a $(-1)$-line in $X'$ because $(K_X+H)\;\!. \;\! C=-1+2=1$.
Thus, continuing this \emph{adjunction process}, we eventually arrive at a minimal model $X_{\textrm{min}}$ of $X$ unless $X$ has negative Kodaira dimension. 
In the rational case, we finally arrive at $\PP^{2}$,
the Veronese surface, a Hirzebruch surface, a Del Pezzo surface, a conic bundle, or, with bad luck, in one of the exceptional cases of part (\ref{badLuck}) above.

For the surface under consideration, the adjunction process produces  a sequence of birational maps
$$
\begin{matrix}
\PP^{4} &&  \PP^{9} && \PP^{10} && \PP^{8} && \PP^{5} \\
\cup     && \cup         && \cup      && \cup && \cup  \\
X^{11}_{10} &{\buildrel 3 \over \rightarrow}& X^{19}_{11} &{\buildrel 2 \over \rightarrow}& X^{18}_{9}&{\buildrel 0 \over \rightarrow}& X^{13}_{6} &{\buildrel 3 \over \rightarrow}& X^{6}_{2}
\end{matrix}
$$
Here, $X^{d}_{\pi}$ denotes a surface of degree $d$ with sectional genus $\pi$, and ${\buildrel \ell \over \rightarrow}$ denotes a birational map which blows down $\ell$ $(-1)$-curves.
The final surface $X^{6}_{2} \subset \PP^{5}$ is the complete intersection of the Segre product $\PP^{1}\times \PP^{2}$ with a quadric $Q$. Thus   $X^{6}_{2}$ is a conic bundle which has $6$ singular fibers. Blowing down one of the two $(-1)$-curves in each singular fiber, we arrive at a Hirzebruch surface. In general, the choice of curves in such a  process is not canonical, and we might need to extend our field of definition to define the maps and, consequently, a rational parametrization of $X$. Luckily, in the example under consideration, no field extension is needed. The resulting parametrization is by forms of bi-degree $(10,11)$ on $\PP^1\times \PP^1$
with $14$ base points of multiplicities $(5^6,4^4,2^2,1^3)$.
For a tutorial containing the complete \OSCAR code for this example and further explanations, we refer to the \OSCAR webpage.
\end{example}

\begin{example}[The Alexander Surface \cite{alexander1988}] Consider a module $M$ with minimal free presentation $S^{3}(-1)\oplus S^{15}(-2) \to S\oplus S^{3}(-1) \to M \to 0$ over $S=\QQ[x_{0},\ldots,x_{4}]$.
The minimal Betti table of $M$ is

\[
\begin{tightarray}{l c r r r r r r}
   & & 0 & 1 & 2 & 3 & 4 & 5 \\
\hline
0 &:   & 1 & 3 & 3 & 1 & - & - \\
1  &: & 3 & 15 & 26 & 15 & - & - \\
2  &: & - & - & - & 6 & 10 & 3 \\
\hline
\text{total}&: & 4 & 18 & 29 & 22 & 10 & 3 \\
\end{tightarray}
\]      
and the dual of the linear strand $0\to S^{6}(-5) \to S^{10}(-6) \to S^{3}(-7)\to 0$ is a complex $G$ whose homology $H_{1}(G) \cong I_{X}(9)$ 
is the twisted homogeneous ideal of a smooth projective surface $X$.  Rather than showing this construction, we demonstrate
how to load  the surface as a projective scheme from the corresponding \OSCAR database which offers preconstructions of all surfaces 
listed in \cite{DES} as well as some of the other surfaces found
later\footnote{To ease subsequent computations, the surfaces are
  constructed over finite fields.
Note, however, that in contrast to the surface in Example \ref{ratd=11pi=10}, which was found by a random search in small characteristic, the surfaces in the data 
base were constructed using recipes which also work in characteristic zero. So all computations can be confirmed in characteristic zero, although this
may be time consuming.}:

\inputminted[firstline=1,lastline=43]{jlcon}{alexander-surface.jlcon}

\noindent
The sextic generator of the ideal is needed because the quintics  alone define the union of $X$ with a $6$-secant line:
\inputminted[firstline=44,lastline=63]{jlcon}{alexander-surface.jlcon}\vspace{-2mm}
\noindent
The astute reader will notice that the linear strand 
$$0\to S^{6}(-9) \to S^{3}(-8) \to S^{3}(-7) \to S(-6)\to 0$$
arising from the resolution \mintinline{jl}{FA} is (up to twist) the Koszul complex  of a line in $\PP^4$. It should not
come as a surprise that this line is the 6-secant above.

This time, the adjunction process yields
$$
\begin{matrix}
\PP^{4} &&  \PP^{5} && \PP^{5} && \PP^{4} && \PP^{2} \\
\cup     && \cup         && \cup      && \cup && \parallel  \\
X^{9}_{6} &{\buildrel 0 \over \rightarrow}& X^{10}_{6} &{\buildrel 0 \over \rightarrow}& X^{9}_{5}&{\buildrel 0 \over \rightarrow}& X^{6}_{3} &{\buildrel 10 \over \rightarrow}& X^{1}_{0}
\end{matrix}.
$$ 
Recursively substituting the parametrizations of the intermediate surfaces in the adjunction matrices and computing the syzygies of the transposed matrices yields a rational  parametrization of $X$.
In the example, $X$ is parametrized by forms of degree $13$ which vanish to order four at ten base points.

The module $M$ has Hilbert function $(1,5,3,0,\ldots)$. A general
module with this Hilbert function has syzygies of type
   \[
\begin{tightarray}{l c r r r r r r}
    & & 0 & 1 & 2 & 3 & 4 & 5 \\
\hline
0 &:   & 1 & - & - & - & - & - \\
1  &: & - & 12 & 25 & 15 & - & - \\
2  &: & - & - & - & 6 & 10 & 3 \\
\hline
\text{total}&: & 1 & 12 & 25 & 21 & 10 & 3 \\
\end{tightarray}
\]   
\noindent
and a surface obtained as above from such a module has syzygies of type\pagebreak[4]
\[
\begin{tightarray}{l c r r r r r r}
    & & 0 & 1 & 2 & 3 & 4 \\
\hline
0 &:   & 1 & - & - & - & - \\
1  &: & - & - & - & - & - \\
2  &: & - & - & - & - & - \\
3  &: & - & - & - & - & - \\
4  &: & - & 15 & 25 & 12 & - \\
5  &: & - & - & - & - & 1 \\
\hline
\text{total}&: & 1 & 15 & 25 & 12 & 1 \\
\end{tightarray}
\]

\noindent
This time, the resulting surface has no $6$-secant line, instead it has a $(-1)$-line.
The adjunction process 
$$
\begin{matrix}
\PP^{4} &&  \PP^{5} && \PP^{5} && \PP^{5} \\
\cup     && \cup         && \cup      && \cup   \\
Y^{9}_{6} &{\buildrel 1 \over \rightarrow}& Y^{10}_{6} &{\buildrel 0 \over \rightarrow}& Y^{10}_{6}&{\buildrel 0 \over \rightarrow}& Y^{10}_{6} 
\end{matrix}
$$ 
becomes periodic, because $Y'=Y^{10}_6$ is a minimal Enriques surface and
$2K_{Y'}\sim 0$.

Notice that the twelve quadric generators of the ideal $\widetilde m$ with $M=S/\widetilde m$ are annihilated by three quadrics
in $k[\partial_0,\ldots,\partial_4]$, where
$\partial_i=\frac{\partial}{\partial x_i}$. The three quadrics  generate the homogeneous ideal to a canonical curve of genus $5$ in $\check \PP^4$. Conversely, the curve determines the ideal $\widetilde m$ and hence $M$ and the non-minimal Enriques surface $Y$.
This implies the curious fact that an open part of the universal family $\cU \to \cM_E$ of Fano polarized Enriques surfaces is isomorphic to an open part of the moduli space $\cM_5$ of genus $5$ curves. This was first observed in [DES], but awaits a geometric explanation till today.\vspace{-1mm}
\end{example}

\section{Cohomology of coherent sheaves}\label{cohomology}

The \OSCAR command \mintinline{jl}{sheaf_cohomology} offers two algorithms for computing the cohomology of coherent sheaves over projective $n$-space. 
The algorithms are based on local cohomology (see \cite{Eis98}) and on Tate resolutions via the Bernstein--Gelfand--Gelfand correspondence as introduced 
in  \cite{EFS03}, respectively. The first algorithm makes use of syzygy computations over the symmetric algebra, and the second 
algorithm is based on syzygy computations over the exterior algebra (see \cite{DE02} for a tutorial). Thus, in most examples, the 
second algorithm is much faster. The command \mintinline{jl}{sheaf_cohomology} takes as input a finitely generated graded module
whose sheafification gives the desired coherent sheaf as well as numerical information on what cohomology groups should be considered.
It returns the vector space dimensions of these groups in the form of a cohomology table.
\begin{example}\label{cohomOfIX} For the ideal sheaf  $\cI_{X}$ of the surface $X \subset \PP^{4}$ from Example \ref{ratd=11pi=10}, 
the dimensions $h^{i}(\PP^{4},\cI_{X}(j))$ in the range $j=-2,\ldots 8$ are:
\inputminted[firstline=60,lastline=71]{jlcon}{char3-surface-2.jlcon}
\noindent
Here, as for Betti tables, the symbol \mintinline{jl}{-} refers to a zero entry. For example, the $h^{1}(\PP^{4},\cI_{X}(j))$ in the range $j=1,\ldots 5$ are
$0,1,5,5,0.$
\end{example}

Now we come to the Tate resolution. Let $\cF= \widetilde M$ be the coherent sheaf on $\PP^{n}$ associated to a finitely generated graded module 
$M=\sum M_{d}$ over $S = k[x_{0},\dots,x_{n}]$. We describe how 
to compute the dimensions $h^{i}(\PP^{n},\cF(j))$ via exterior syzygies over 
the  algebra $E=k[e_{0},\ldots,e_{n}]$ dual to $S$.

Note that $M$ defines a complex of graded free $E$-modules
$$R(M): \ldots \to \Hom_{k}(E,M_{d-1}) \to \Hom_{k}(E,M_{d}) \to \Hom_{k}(E,M_{d+1}) \to \ldots. $$
The complex $R(M_{\ge r})$ is acyclic if $r$ is greater than the Castelnuovo-Mumford regularity of $M$. For such an $r$ we extend $R(M_{\ge r})$ by computing a minimal free resolution 
$$\ldots \to T^{r-2} \to T^{r-1}$$
of the $E$-module
$P=\ker(\Hom_{k}(E,M_{r}) \to \Hom_{k}(E,M_{r+1}))$. This yields a doubly infinite exact complex
$$ \TT=\TT(\cF): \ldots \to T^{r-2} \to T^{r-1} \to T^{r} \to T^{r+1} \to \ldots $$
whose isomorphism class depends only on the sheaf $\cF$. We call $\TT(\cF)$ the \emph{Tate resolution} of $\cF$.

\begin{theorem}[Eisenbud--Fl\o ystad--Schreyer] With notation as above,
$$\TT^{e}(\cF)=T^{e}= \sum_{i=0}^{n} \Hom_{k}(E,H^{i}(\PP^{n},\cF(e-i))).$$
\end{theorem}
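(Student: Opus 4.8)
The plan is to establish the formula first for large cohomological degree $e$, where it is a reformulation of Castelnuovo--Mumford regularity, then for very negative $e$ via Serre duality, and finally for the remaining intermediate range via the corner-complex technology underlying the Bernstein--Gelfand--Gelfand (BGG) correspondence.

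\emph{Large $e$.} Fix an integer $r$ exceeding the Castelnuovo--Mumford regularity of $M$, so that, by construction, $\TT$ is obtained by splicing the acyclic complex $R(M_{\ge r})$ with a minimal free $E$-resolution of $P=\ker\bigl(\Hom_k(E,M_r)\to\Hom_k(E,M_{r+1})\bigr)$; in particular $T^e=\Hom_k(E,M_e)$ for $e\ge r$. By $r$-regularity, $M_j\to H^0(\PP^n,\cF(j))$ is an isomorphism for $j\ge r$, and $H^i(\PP^n,\cF(j))=0$ for all $i\ge 1$ and $j\ge r-i$. Hence, for $e\ge r$ and $1\le i\le n$, one has $e-i\ge r-i$, so $H^i(\PP^n,\cF(e-i))=0$, and therefore
\[
T^e=\Hom_k(E,M_e)=\Hom_k\bigl(E,H^0(\PP^n,\cF(e))\bigr)=\bigoplus_{i=0}^{n}\Hom_k\bigl(E,H^i(\PP^n,\cF(e-i))\bigr).
\]
This proves the assertion for $e\ge r$; since $\TT(\cF)$ depends only on $\cF$, the choices of $r$ and of $M$ are harmless.

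\emph{Very negative $e$.} Replacing $\cF$ by a finite locally free resolution (which leaves $\TT(\cF)$ unchanged), I would pass to $\mathcal G=\cF^{\vee}\otimes\omega_{\PP^n}$. Duality over $E$ gives $\TT(\mathcal G)\cong\Hom_E(\TT(\cF),\omega_E)$ up to an internal twist, so terms of $\TT(\cF)$ in very negative cohomological degree become terms of $\TT(\mathcal G)$ in very positive degree, which the previous step computes; combining this with Serre duality $H^i(\PP^n,\cF(e-i))\cong H^{n-i}(\PP^n,\mathcal G(i-e))^{*}$ converts the formula for $\mathcal G$ into the asserted formula for $\cF$ in that range.

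\emph{Intermediate $e$, and the main obstacle.} For the finitely many remaining degrees, several groups $H^i(\PP^n,\cF(e-i))$ with $0<i<n$ may contribute to $T^e$ simultaneously, and here the statement is genuinely substantial. The plan is to exploit that $\TT$ is a \emph{doubly infinite exact} complex of free $E$-modules and to form its \emph{corner complex} at the relevant spot: truncating $\TT$ in cohomological degree $e$ and splitting off the $i$-th linear block produces a complex whose homology at the corner is precisely $H^i(\PP^n,\cF(e-i))$. Making this precise---that is, matching the linear strands of $\TT$, computed as $\Tor^{E}_{\bullet}(\TT,k)$, with the hypercohomology of $\cF$---is the heart of the proof and the step I expect to be hardest; it uses the BGG equivalence $D^b(\mathrm{coh}\,\PP^n)\simeq\underline{\mathrm{mod}}\,E$ together with Eisenbud's computation of $\Tor$ over the exterior algebra and local duality, rather than regularity alone. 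Everything else reduces, via the twist-and-duality arguments above, to the ranges already treated.
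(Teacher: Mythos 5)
The paper itself does not prove this theorem; it states it as a result of Eisenbud, Fl\o ystad, and Schreyer and cites \cite{EFS03}. So there is no argument in the text to compare against, and I am judging your sketch on its own merits and against the known proof.

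Your treatment of the range $e\ge r$ is correct, but it is also the only trivial range: for those $e$, the construction given in the text literally sets $T^e=\Hom_k(E,M_e)$, and $r$-regularity identifies $M_e$ with $H^0(\PP^n,\cF(e))$ while killing all higher cohomology in the relevant twists. The entire content of the theorem is about $e<r$, where $T^e$ is a term of the minimal free $E$-resolution of $P$, and your argument never actually touches that resolution.

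The ``very negative $e$'' step risks circularity. The $E$-duality $\TT(\mathcal G)\cong\Hom_E(\TT(\cF),\omega_E)$ (up to twist) is itself a nontrivial theorem about Tate resolutions whose usual proof \emph{uses} the formula you are trying to establish: to recognise $\Hom_E(\TT(\cF),\omega_E)$ as the Tate resolution of $\mathcal G$, one must identify it in the range where it is not simply the dual of $R(M_{\ge r})$ --- which is exactly the range you do not yet control. Likewise, replacing $\cF$ by a finite locally free resolution presupposes that $\TT$ is well defined on (a piece of) the derived category, which is a stronger statement than the assertion in the text that $\TT(\cF)$ depends only on $\cF$. Unless you can give an independent derivation of the duality --- say by a direct comparison of the kernels $\ker(R(M_r)\to R(M_{r+1}))$ and $\ker(R(N_{r'})\to R(N_{r'+1}))$ over $E$ --- this step is not a reduction to the easy case.

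For the intermediate range you name the right ingredients (BGG reciprocity, $\Tor$ over $E$, local duality) but explicitly leave them as an unproved black box, calling it the hardest step. In fact, in \cite{EFS03} there is no case-splitting at all: the formula is proved uniformly for every $e$ by identifying the graded pieces of $\Tor_*^E(P,k)$, which give the generators of the minimal free resolution extending $R(M_{\ge r})$, with the graded pieces of local cohomology $H^i_{\mathfrak m}(M)$ via the reciprocity of the BGG functors $\mathbf R$ and $\mathbf L$, and then using the standard identification of $H^i_{\mathfrak m}(M)_j$ with $H^{i-1}(\PP^n,\cF(j))$ for $i\ge2$ (and the low-degree exact sequence for $i\le 1$). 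The uniform argument you defer is thus the whole proof; once it is carried out, both the range restriction for large $e$ and the duality detour for small $e$ become special cases rather than independent steps.
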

Thus each cohomology group $H^{i}(\PP^{n},\cF(j))$ occurs precisely once in the complex. 

\begin{example} For the ideal sheaf from Example \ref{cohomOfIX}, we get:
\inputminted[firstline=73,lastline=82]{jlcon}{char3-surface-2.jlcon}
\noindent
With no algorithm specified, \OSCAR applies the default option \mintinline{jl}{algorithm = :bgg}. Due to the shape
of the Tate resolution, the function does then not compute all values in a given range of twists $l <h$. The missing values are indicated by 
a \mintinline{jl}{*}. To determine all values in the range $l <h$, enter \mintinline{jl}{ sheaf_cohomology(MI, l-ngens(base_ring(M)), h+ngens(base_ring(M)))}.
 \end{example}

\section{Plane Curves, their Duals, and Riemann--Roch Spaces}

Let $C=V(f) \subset \PP^{2}$ be an irreducible plane curve, given by a square-free homogeneous polynomial $f$. The space of tangent lines $\check C \subset \check \PP^{2}$ of $C$ is a curve in the dual projective space.%

\begin{example}\label{ex:dual-curve} We demonstrate how to write an \OSCAR function which computes the dual of a plane curve specified by a homogeneous polynomial:
\inputminted[firstline=1,lastline=25]{jlcon}{dual_curve_function.jlcon}
\noindent
We apply the function to a smooth plane quartic curve:
\inputminted[firstline=28,lastline=35]{jlcon}{dualcurve.jlcon}
\noindent
The curve $C$ and its dual are depicted in Figure~\ref{fig:dualcurve}.
 \end{example}

 \begin{figure}[ht]
\centering
\includegraphics[scale=0.12,angle=0]{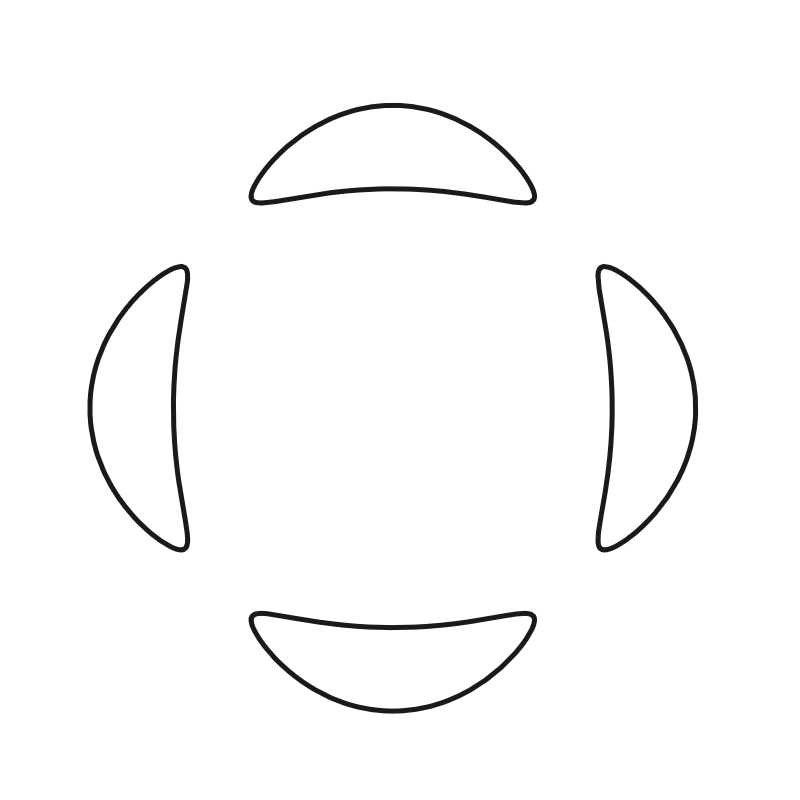}\quad
\includegraphics[scale=0.12,angle=0]{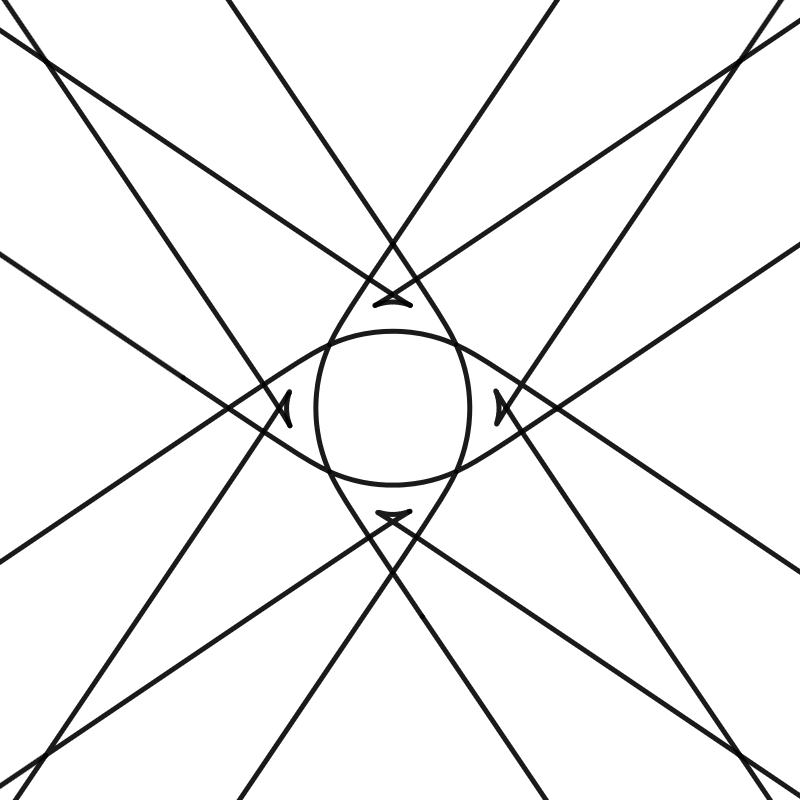}
\caption{Smooth plane quartic and its dual curve.}
\label{fig:dualcurve}
\end{figure}

By the Pl\"ucker formulas from the theorem below the curve $C$ in the example has $24$ flexes and $28$ bi-tangents. These correspond to $24$ ordinary cusps and $28$ ordinary double points of the dual curve.

\begin{theorem}[Pl\"ucker Formulas]\label{thm:Pluecker}
Let $C\subset \PP^2$ be an irreducible plane curve of degree $d$ defined over a field of characteristic $0$, and such that $C$
and the dual curve $\check C\subset \check \PP^2$ have only ordinary nodes and cusps as singularities. Let $f, b,\kappa, \delta$ denote the number of flexes, bi-tangents, cusps and nodes of $C$, respectively. Let $g$ denote the geometric genus of $C$, and let $\check d$ be the degree of $\check C$. Then
\begin{align*}
	g  &= \frac{1}{2}(d-1)(d-2)-\delta-\kappa = \frac{1}{2}(\check d-1),(\check d-2)-b-f, \\
	\check d&= d(d-1) - 2\delta-3\kappa,\quad\;  d= \check d(\check d-1) - 2b-3f,\\
   f & =3d(d-2)-6\delta-8\kappa, \quad \kappa =3\check d(\check d-2)-6b-8f,
\end{align*}
and
$b =\frac{1}{2} d(d-2)(d-3)(d+3)- (4d^2-4d-20)\delta-(6d^2-6d-27)\kappa+(2\delta + 3\kappa)^2.$
 \end{theorem}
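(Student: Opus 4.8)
The plan is to isolate three genuinely geometric inputs --- the genus formula, the class (degree-of-the-dual) formula, and the flex formula coming from the Hessian --- and then to deduce everything else, including the bitangent count, by biduality together with pure elimination. The characteristic-$0$ hypothesis enters in two essential ways: it makes the Gauss map $q\mapsto T_qC$ birational onto $\check C$, and it makes biduality $\check{\check C}=C$ hold. Consequently the correspondence $C\leftrightarrow\check C$ interchanges general tangent lines of one curve with general points of the other, bitangents of $C$ with nodes of $\check C$, and flexes of $C$ with cusps of $\check C$; under the running hypothesis that $\check C$ has only ordinary nodes and cusps, it therefore has exactly $b$ nodes and $f$ cusps and nothing else.

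First I would settle the two genus identities. A node is an ordinary double point and hence has delta-invariant $1$; for an ordinary cusp $y^{2}=x^{3}$ the normalisation $t\mapsto(t^{2},t^{3})$ shows $\overline{\mathcal O_o}/\mathcal O_o\cong k$, so it too has delta-invariant $1$. Thus Proposition~\ref{prop:geom-genus} applied to $C$ gives $g=\tfrac{1}{2}(d-1)(d-2)-\delta-\kappa$, and applied to $\check C$ --- whose geometric genus equals $g$ because the Gauss map is birational --- gives $g=\tfrac{1}{2}(\check d-1)(\check d-2)-b-f$.

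Next I would prove the class formula $\check d=d(d-1)-2\delta-3\kappa$ by intersecting $C$ with a general first polar $P_p=V\bigl(\sum_i p_i\,\partial f/\partial x_i\bigr)$, a curve of degree $d-1$. The substance here is the local computation that, for generic $p$, $P_p$ meets $C$ transversally at exactly the $\check d$ smooth points whose tangent line passes through $p$, with intersection multiplicity $2$ at each node, multiplicity $3$ at each cusp, and nowhere else; B\'ezout's total of $d(d-1)$ then forces $\check d=d(d-1)-2\delta-3\kappa$. Applying the same argument to $\check C$ and invoking $\check{\check C}=C$ yields the dual relation $d=\check d(\check d-1)-2b-3f$. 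Entirely analogously, the flexes of $C$ are the points of $C\setminus\mathrm{Sing}\,C$ at which the Hessian $H_f=\det\bigl(\partial^{2}f/\partial x_i\partial x_j\bigr)$, of degree $3(d-2)$, vanishes; B\'ezout gives $\sum_q i(C,V(H_f);q)=3d(d-2)$, and the local analysis that a transverse flex contributes $1$, a node $6$ and a cusp $8$ produces $f=3d(d-2)-6\delta-8\kappa$, whence $\kappa=3\check d(\check d-2)-6b-8f$ by the same duality.

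It then remains to read off $b$, which is pure bookkeeping: solve $d=\check d(\check d-1)-2b-3f$ for $b$, substitute $\check d=d(d-1)-2\delta-3\kappa$ and $f=3d(d-2)-6\delta-8\kappa$, and expand and collect the resulting polynomial in $d,\delta,\kappa$. The main obstacle --- and essentially the only place where the hypothesis that $C$ and $\check C$ carry only ordinary nodes and cusps is used --- is the package of local intersection-multiplicity computations: the contributions of a node, a cusp, a flex and a bitangent point to $C\cap P_p$ and to $C\cap V(H_f)$, together with the genericity checks that for a general $p$ no further coincidences occur. I would carry these out by taking analytic local parametrisations of the branches at each special point and expanding the polar and Hessian equations to the order needed to extract the local intersection number.
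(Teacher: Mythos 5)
The paper states the Pl\"ucker formulas without proof, so there is nothing to compare against directly; your proposal reconstructs the standard classical argument. Its architecture is sound: genus via delta invariants with $\delta_p=1$ at nodes and ordinary cusps, the class formula via B\'ezout applied to a generic first polar, the flex formula via the Hessian, and then biduality (available in characteristic $0$ by reflexivity) to transfer everything to $\check C$, whose nodes and cusps are the bitangents and flexes of $C$. The local intersection contributions you promise to compute ($2$ and $3$ for the polar at a node and cusp; $6$ and $8$ for the Hessian) are the correct values, and the genericity assertion about the first polar can indeed be checked by branch parametrisations. So the plan is correct and it is, essentially, the textbook route.

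One thing you should notice on carrying out the final ``pure bookkeeping'' step: substituting $\check d=d(d-1)-2\delta-3\kappa$ and $f=3d(d-2)-6\delta-8\kappa$ into $b=\tfrac12\bigl(\check d(\check d-1)-d-3f\bigr)$ gives
\[
b=\tfrac12\Bigl[\,d(d-2)(d-3)(d+3)-(4d^2-4d-20)\delta-(6d^2-6d-27)\kappa+(2\delta+3\kappa)^2\,\Bigr],
\]
i.e.\ the factor $\tfrac12$ must apply to the \emph{entire} bracket, not only to the leading term $d(d-2)(d-3)(d+3)$. As printed in the theorem the $\tfrac12$ looks like it multiplies only the first summand, and that version fails already for a one-cusp quartic ($d=4$, $\delta=0$, $\kappa=1$): it gives $28-45+9=-8$ instead of the correct $b=10$. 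There is also a stray comma in the dual genus formula, which should read $\tfrac12(\check d-1)(\check d-2)-b-f$. Neither affects the soundness of your argument, but flag both when you write up the bookkeeping so that a reader reconciling your derived formula with the statement is not misled.
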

 
 In Example \ref{ex:dual-curve}, eight of the flexes/cups are real and visible in Figure~\ref{fig:dualcurve}. All $28$ bi-tangents are real.    The $24$ flexes are precisely the $g^{3}-g$ Weierstrass points on this curve of genus $g=3$.

The only other possible gap sequence\footnote{The gap sequence at a point $p$ on a curve $C$ of genus $g$ is the sequence of $g$ integers that are not pole orders at $p$ of regular functions on $C\setminus \{p\}$. The point $p$ is a Weierstrass point if the sequence is different from $\{1,2,\ldots,g\}$.} of a Weierstrass point on a non-hyperelliptic curve of genus $3$ is $1,2,5$. In that case, the plane quartic has a tangent line which intersects the curve with multiplicity $4$ at the Weierstrass points and the dual curve becomes more singular than the singularities of curves allowed in the Pl\"ucker formulas.

Note that in the characteristic zero context of Theorem \ref{thm:Pluecker}, the double dual $\check{\check C}$ is identical to $C$. This explains the symmetry of the formulas. In positive 
characteristic, $\check{\check C}= C$ might not hold. 

What kind of singularities can occur in the dual curve of a smooth plane quartic
with a non-ordinary Weierstrass point?

\begin{example}\label{non-ordinary Weierstrass points}  Consider the quartic plane curves defined by
${\color{red} f_1=x^{4}+y^{3}-y}$
and ${\color{blue}f_2=x^{4}+y^{3}-\frac{2}{9}x^{2}-y+\frac{1}{81}}$. The curves are depicted in Figure~\ref{fig:curves2}.
\begin{figure}[ht]
\centering
\includegraphics[scale=0.15,angle=0]{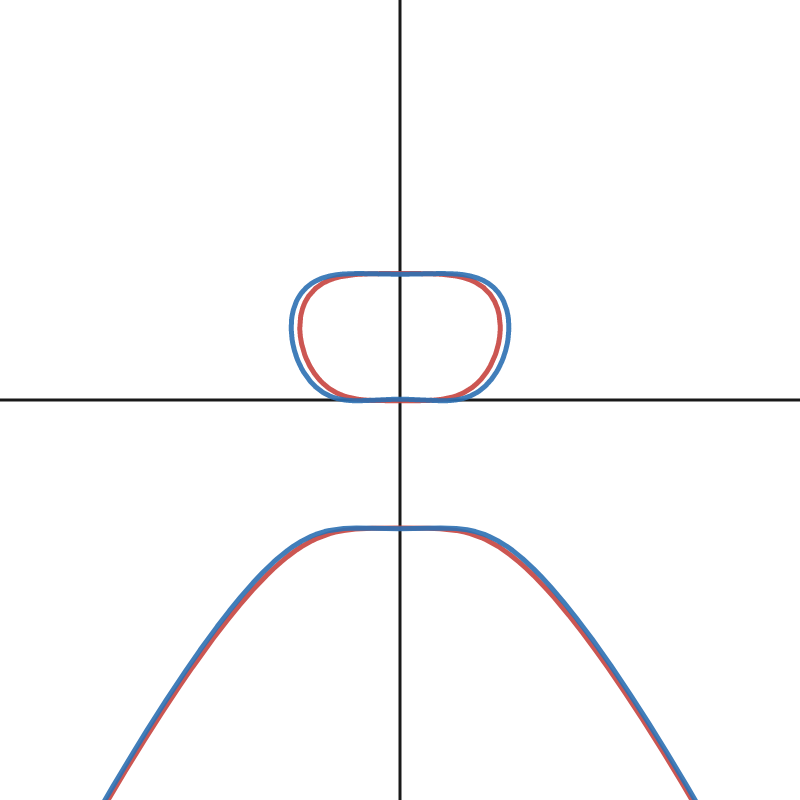}\quad
\includegraphics[scale=0.15,angle=0]{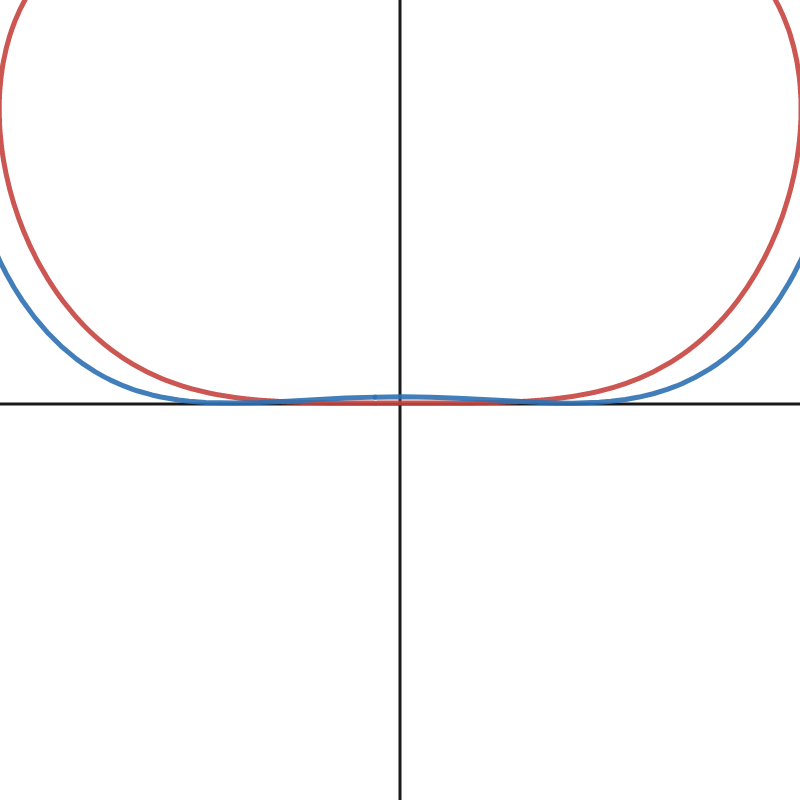}
\caption{Smooth quartic curves.}
\label{fig:curves2}
\end{figure} 
Their projective closures are smooth and the dual curves have the following affine equations:
\inputminted[firstline=37,lastline=49]{jlcon}{dualcurve.jlcon}
\noindent
Figure \ref{fig:quartic_zoom} depicts the real points of the affine curves and a magnification of the plot around the point $p'=[0:1:0] \in \check \PP^2$.

\begin{figure}[ht]
  \centering
  \includegraphics[scale=0.15,angle=0]{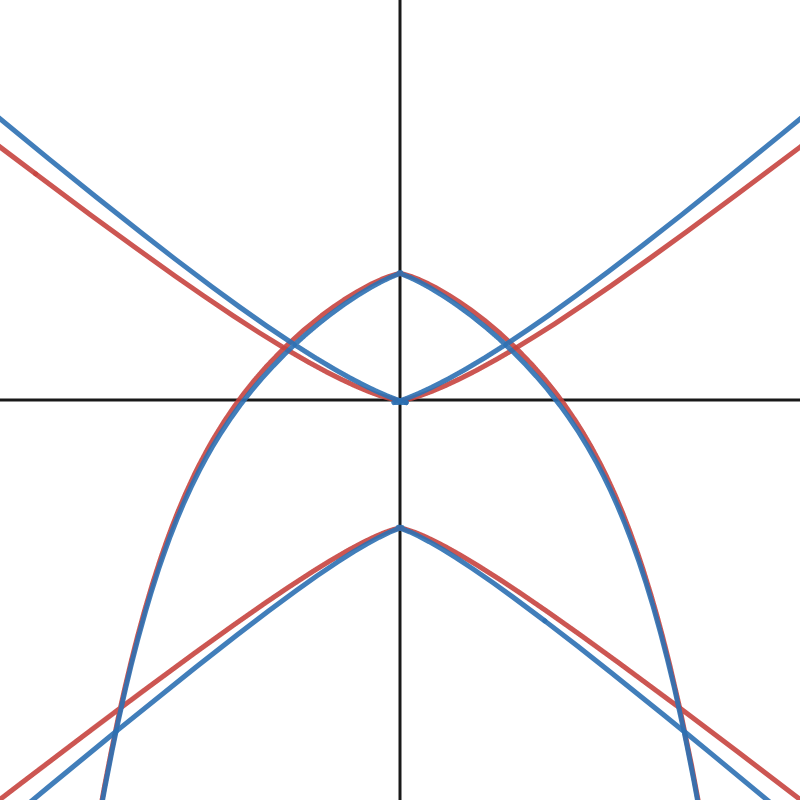}\quad 
  \includegraphics[scale=0.15,angle=0]{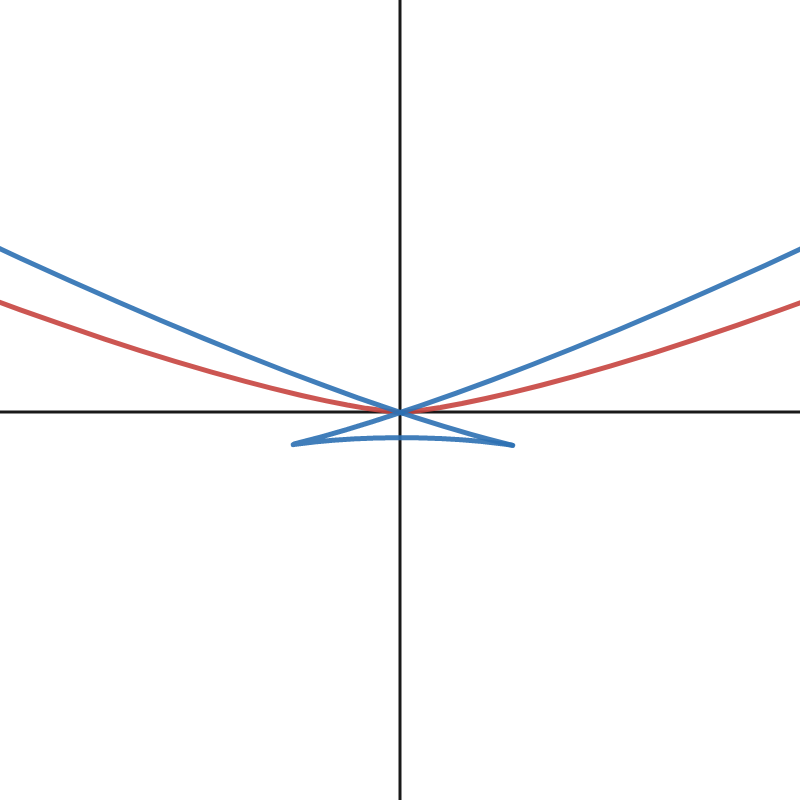}
  \caption{Plots of the real points of the curves and a magnification of the plot around the point $p'=[0:1:0] \in \check \PP^2$.}
  \label{fig:quartic_zoom}
\end{figure}
\noindent
The dual curve of $C_1=V(f_1)$ has an $E_6$-singularity at $p'$ which deforms to a node and two cusps in the dual curve of $C_2=V(f_2)$.
The corresponding point $p=[0:0:1]\in \PP^2$ is a Weierstrass point with gap sequence $1,2,5$.
Actually $C_1$ has $12$ non-ordinary Weierstrass points. There are no
other Weierstrass points since $2\cdot 12= 3^3-3$. Furthermore, $C_1$ has $16$ bi-tangents  apart from the tangents at the Weierstrass points, of which $4$ are real. 
\end{example}

\begin{example} It is known that for arbitrary genus $g$ a general curve
$C\in \cM_g$ has $g^3-g$ simple Weierstrass points and that the monodromy group over $\cM_g\setminus B$, where $B$ is the set of curves with a non-ordinary Weierstrass point or a non-trivial automorphism, 
is the full symmetric group
on $g^3-g$ points, see \cite{Eisenbud-Harris}. 

In case $g=3$ we can verify this fact via a Galois group computation.
Consider the family of plane curves defined by
$$V(s(8x^{4}+20x^{2}y^{2}+8y^{4}-48x^{2}z^2-48y^{2}z^2+65z^4)+t(x^3y+y^3z+z^3x)) \subset \PP^1\times \PP^2.$$
The set  of Weierstrass points in the fibers of this family is a curve $X$ 
with a $24:1$ map $X\to \PP^1$. One can verify that  the monodromy group of $X(\CC) \to \PP^1(\CC)$ is $S_{24}$ by  Galois group computations over $\QQ$ as follows. We consider two fibers, and commence showing that the corresponding two extension fields have Galois group $S_{24}$ over $\QQ$.
\inputminted[firstline=1,lastline=33]{jlcon}{monodromy.jlcon}
\noindent
Furthermore, we show that the extension fields do not have a common subfield other than $\QQ$: The only possible ramified primes are divisors of both polynomial discriminants. We observe that the gcd of the polynomial discriminants of the fibers is a power of 2.
\inputminted[firstline=35,lastline=42]{jlcon}{monodromy.jlcon}
\noindent
Since the field discriminant (which contains exactly the ramified primes) 
divides the polynomial discriminant, 
only $2$ is a possible common prime divisor of the two field discriminants under consideration. Computing a maximal order over $2$ for one of the fibers shows 
that $2$ is not ramified in the normalization:
\inputminted[firstline=44,lastline=56]{jlcon}{monodromy.jlcon}
\noindent
Hence, $2$ cannot divide the 
discriminant of the intersection of the extension fields, which thus has discriminant one, and thus is $\QQ$.

Thus Galois group computations provide an alternative approach
to the commonly used
homotopy continuation technique.
\end{example}

Let $C\subset \PP^2$ be an irreducible plane curve of degree $d$ with only ordinary singularities $p_1,\ldots,p_s$ of multiplicities $r_1,\ldots, r_s$.
Using the completeness of the adjoint systems, we can compute for a divisor
$D$ on the normalisation $X=\widetilde C$ the Riemann--Roch space
$$H^0(X,\cO(D))=L(D)=\{f\in K(X)^* \mid (f)+D \ge 0\}\cup \{0\}$$ as follows.\\

\noindent
\textbf{Algorithm} (Riemann--Roch space) $\quad$  \\
\Input An irreducible plane curve $C'=V(f)$ with only ordinary singularities given by a square-free homogeneous polynomial $f \in K[x_0,x_1,x_2]$. \\
A divisor $D=\sum n_ip_i$  with support disjoint from the singularities of $C'$.  \\
\Output  $\ell(D)=\dim L(D)$ and a basis of $L(D)$. 

\begin{enumerate}
\item If the divisor is given by a list of pairs of multiplicities and points $\{ (n_i,p_i)\}$ compute
$$ \II(D_1) = \bigcap_{n_i>0} (\II(p_i)^{n_i}+(f)) \hbox{ and } \II(D_2)=\bigcap_{n_i<0} (\II(p_i)^{-n_i}+(f)). $$
\item Compute the adjoint ideal $J_{\textrm{adj}}$.
\item Verify that $C$ has only ordinary singularities, if necessary.
\item Compute $I = \II(D_1) \cap J_{\textrm{adj}}.$
\item Choose $e>0$ such that $I_e \supsetneq (f)_e$ and a form $h \in I_e \setminus (f)_e$.
\item Compute the residual ideal $I'= (f,h):I$. 
\item If $V(I'+\II(D_2))= \emptyset$ then $J = I' \cap \II(D_2)$ else $J=(I'  \cdot \II(D_2)+(f)):(x_0,x_1,x_2)^\infty.$
\item Compute $\ell=\ell(D)= \dim J_e/(f)_e$ and, if $\ell>0$, forms $h_1, \ldots h_\ell$, which represent a basis of $J_e/(f)_e$.
\item Return $\ell(D)$ and if $\ell(D)>0$ the  rational functions $h_1/h, \ldots, h_{\ell}/h$.
\end{enumerate}\vspace{1mm}

\begin{example}(Taken from \cite{Schreyer24}).
Consider the family of sextic curves
 $C_t= V(f_t) \subset \PP^2$
with affine parts defined by the polynomials
{\small
\begin{align*} f_t  =&xy(x+y)-3(x^5+y^5)-2(x^6+y^6)  \cr
&+t[-2xy+12(x^4+y^4)+x^3y+xy^3-20x^2y^2+8(x^5+y^5)-12(x^4y+xy^4) \cr
& +6(x^3y^2+x^2y^3)-5(x^5y+xy^5)-2(x^4y^2+x^2y^4)+14x^3y^3]\cr
&+t^2[-12(x^3+y^3)-2(x^2y+xy^2)-8(x^4+y^4)+24(x^3y+xy^3)\cr 
& -44x^2y^2+10(x^4y+xy^4)+20(x^3y^2+x^2y^3)+10(x^4y^2+x^2y^4)+24x^3y^3].
\end{align*}
}
\begin{figure}[t]
\centering
\includegraphics[scale=0.15]{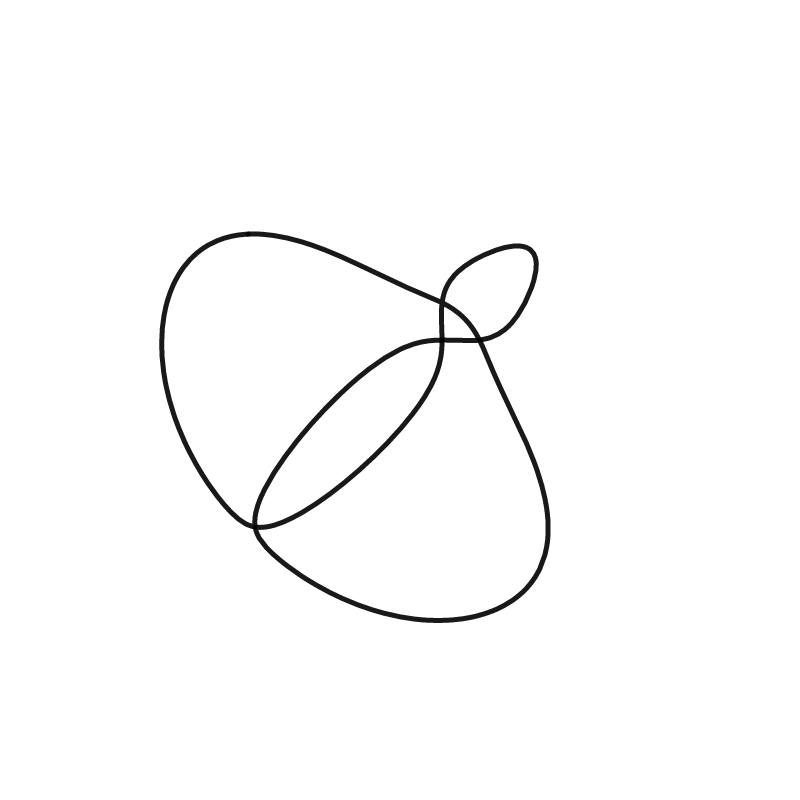}\quad
\includegraphics[scale=0.15]{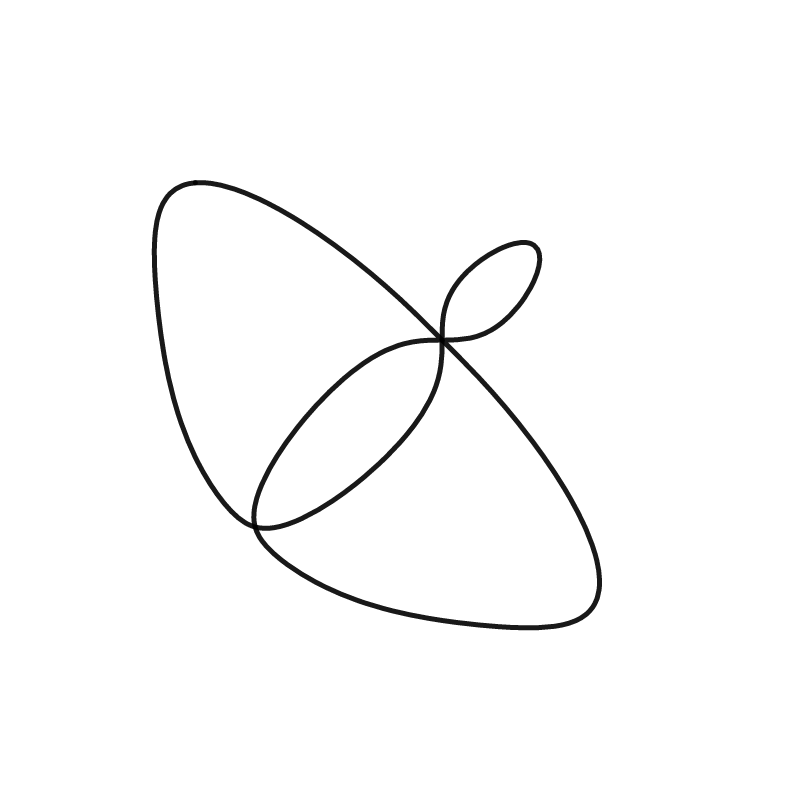}
\caption{Plots of the curves $C_{1/10}$ and $C_0$.}
\label{fig:genus6-configurations}
\end{figure}
\noindent
The curve $C_t$ has four ordinary double points
$p_0=(0,0),p_1=(2t,0),p_2=(0,2t)$, and $p_3=(-1,-1)$, 
and no further singularities for general values for $t$. 
However, $C_0$ has an ordinary triple point at $p_0$, see Figure~\ref{fig:genus6-configurations} for a plot of the curves $C_{1/10}$ and $C_0$. Thus this is a family of curves of genus $6$.

The curve $C_0$ is trigonal. The projection from the triple point induces a $3:1$ map $C\to \PP^1$.
From Petri's Theorem \cite{Petri1923}, the homogeneous ideal of the canonical model of this curve in $\PP^5$
needs cubic generators. We check this computationally:
 
\inputminted[firstline=1,lastline=59]{jlcon}{canonicalimage.jlcon}
\noindent
The curves $C_t$ for $t\not=0$, on the other hand, are 4-gonal
and their canonical images are generated by quadrics alone.
We verify this for the curve $C_{1/10}$.

\inputminted[firstline=61,lastline=107]{jlcon}{canonicalimage.jlcon}
\noindent

\end{example}

\begin{remark} The phenomenon above has a wide generalization.
According to Green's Conjecture \cite{Green84} for smooth projective curves defined over $\CC$ one can read off the gonality (more precisely the Clifford index) of the curve from the Betti table of its canonical model. This was proved in landmark papers by Voisin \cite{Voisin02, Voisin05} for the general curve using $K3$ surfaces and in \cite{AF11} for curves on arbitrary $K3$ surfaces. The conjecture is known to be wrong for some smooth curves over some fields of positive characteristic
\cite{Schreyer86,BoppSchreyer21}. Green's conjecture is open in its full generality.    
\end{remark}

\section{Deformations}\label{Deformations}

\begin{example}
The surface from Example \ref{ratd=11pi=10} is defined over the field $\FF_3$. Actually, we would like to prove the existence of an example over $\CC$.
The key point in the construction is the module $S/m$ with Hilbert function
$(1,5,5,0,\ldots)$. A general module of this Hilbert function
has Betti numbers
 \[
\begin{tightarray}{l c r r r r r r}
    & & 0 & 1 & 2 & 3 & 4 & 5 \\
\hline
0 &:   & 1 & - & - & - & - & - \\
1  &: & - & 10 & 15 & - & - & - \\
2  &: & - & - & 5 & 26 & 20 & 5 \\
\hline
\text{total}&: & 1 & 10 & 20 & 26 & 20 & 5 \\
\end{tightarray}
\quad \hbox{instead of} \quad
\begin{tightarray}{l c r r r r r r}
    & & 0 & 1 & 2 & 3 & 4 & 5 \\
\hline
0 &:   & 1 & - & - & - & - & - \\
1  &: & - & 10 & 15 & 2 & - & - \\
2  &: & - & - & 7 & 26 & 20 & 5 \\
\hline
\text{total}&: & 1 & 10 & 22 & 28 & 20 & 5 \\
\end{tightarray}
\]

The family of modules
$$M = \{ \hbox{good } m's\} \subset \GG(10,H^0(\PP^4,\cO(2))$$
has at the given point $m$ from Example \ref{ratd=11pi=10} at most codimension $7\cdot 2=14$. We will show by computation that equality holds
and that $M$ is smooth of dimension $10\cdot 5 -14=36$ at our given $m$. We start out by determining the free resolution:
\inputminted[firstline=1,lastline=9]{jlcon}{deformation.jlcon}
\noindent
Let $\varphi_1,\ldots,\varphi_3$ denote the first three maps in the resolution of $S/m$. We add a $1\times 10$ deformation matrix $A_1$ to $\varphi_1$
and solve recursively the equations
$$ A_1\cdot \varphi_2=-\varphi_1\cdot A_2,\quad A_2\cdot \varphi_3=-\varphi_2\cdot A_3.$$
We write a helper function to solve this problem, finding the normal space generator-by-generator:

\inputminted[firstline=11,lastline=41]{jlcon}{deformation.jlcon}
\noindent
The complex with differentials
$$\varphi_1+A_1, \varphi_2+A_2 \hbox{ and } \varphi_3+A_3 $$
is a flat first order deformation defined over $\FF_3[t_1,\ldots,t_{50}]/(t_1,\ldots,t_{50})^2$.
The entries of the $7\times 2$ submatrix $B$ of $A_3$ returned by the function
define the tangent space 
$$T_m M \subset T_m\GG(10,H^0(\PP^4,\cO(2))=\Hom(\FF_3^{10},\FF_3^5).$$
\inputminted[firstline=43,lastline=49]{jlcon}{deformation.jlcon}
\noindent
Now, $M$ is defined over the integers. Taking linear equations $\ell_1,\ldots,\ell_{14}$ on $\PP(\Lambda^5 H^0(\PP^4,\cO(2))^*)$
defined over $\ZZ$ which cut mod $3$ the variety $M(\FF_3)$ transversal at $m$, we get a scheme $V(\ell_1,\ldots,\ell_{14})\cap M$ over $\Spec \ZZ$
which has as one of its components an open part of $\Spec \sO_L$
of a number field $L$ and a prime ideal $\pp$ over $(3)$ with $\sO_{L,\pp}/\pp\cong \FF_3$.
The other steps in the construction of the surface work over
$\Spec \sO_{L,\pp}$, since the conditions that the construction works are open conditions. In particular, the generic fiber of the family is a smooth surface defined over $L$ with the same adjunction behavior as the surface over the finite field.
\end{example}

\begin{remark}
Notice that this technique allows one to prove the existence of interesting objects in algebraic geometry with a search using computer algebra.
The example above was found by randomly picking points $p\in \GG(10,15)(\FF_3)$
and testing for $2$ extra syzygies. The probability for a random $p$ to lie in $M(\FF_3)$ is roughly $1:3^{14}$ because the codimension $M$ in $\GG(10,15)$ is $14$. We can improve our search by searching in the unirational subvariety
$$M_1=\{ m \in \GG(10,H^0(\PP^4,\cO(2))) \mid \dim \Tor_3^S(S/m,\FF_3)_4 \ge 1 \}$$
in which $M$ has codimension at most $7$. Four families of smooth non-general type surfaces with $p_g=q=0$, degree $d=11$, and sectional genus $\pi=10$
were found with this method \cite{zbMATH00943474}. This is interesting because of the afore\-mentioned result of Ellingsrud and Peskine \cite{EP89} which states that there are only finitely 
many components of the Hilbert scheme of  $\PP^4$ whose general points correspond to smooth surfaces of Kodaira dimension $<2$, confirming a conjecture 
of Hartshorne about smooth rational surfaces in $\PP^4$. Currently there are $24$ families of smooth rational surfaces known in $\PP^4$. The largest degree of a known 
smooth rational surface in $\PP^4$ is $12$, see \cite{AboRanestad}.
\end{remark}
    
A problem on which the cornerstone system \Singular grew is the search by Greuel and Pfister for a counterexample to the Zariski Conjecture \cite{greuel2021history}:

\textit{Is the multiplicity of an isolated singularity of a hypersurface $V(f) \subset \CC^n$ a topological invariant?}\\
Let $f \in \CC\{x,y,z\}$ be a convergent power series with an isolated singularity at $0$. Recall that the \emph{Milnor fiber}
$$M_f= V(f-t) \cap S^5_\varepsilon \subset \CC^3$$
is homotopic to a bouquet of $\mu$ spheres $S^2$, where $\mu$ is the \emph{Milnor number}
$$\mu= \dim_\CC \CC\{x,y,z\}/(\frac{\partial f}{\partial x},\frac{\partial f}{\partial y},\frac{\partial f}{\partial z}).$$
The \emph{Tjurina number}
$$\tau=\dim_\CC \CC\{x,y,z\}/(f,\frac{\partial f}{\partial x},\frac{\partial f}{\partial y},\frac{\partial f}{\partial z})$$
is the dimension of the (smooth) base space $B$ of the semi-universal deformation of $f$. Zariski asked whether the multiplicity is constant along the $\mu$-constant stratum $\subset B$.  

The speed with which \Singular operates comes from the desire of Greuel and Pfister to treat examples
with five digit Milnor numbers.

\begin{example} Consider 
$$f_t=x^a+y^b + z^{3c} + x^{c+2}y^{c-1} + x^{c-1}y^{c-1}z^3 + x^{c-2}y^c(y^2 + tx)^2$$
with $(a,b,c)=(40,30,8).$ In \OSCAR, multiplicities  and Milnor numbers can be found in the spirit
of Section \ref{sect:local-studies} using local monomial orderings. We get:
$$m(f_0) = 17,\, m(f_t) = 16,\, \mu(f_0) = 10661,\, \mu(f_t) = 10655.$$
For example\footnote{In experiments which aim at finding a particular example in characteristic zero, it is often 
convenient to first work over a finite field to speed up computations. Once a promising example has been found, and no
theoretical argument is known for deducing the existence of an example in characteristic zero, one has to verify the computation
in characteristic zero. In the example under consideration here, the latter is feasible due an algorithm given in \cite{GPS2022}.}:
\inputminted{jlcon}{zariski.jlcon}
\noindent
The multiplicity goes down for $t\not=0$. Unfortunately (or luckily), the Milnor number goes down as well. Thus $f_t$ is not a
counterexample to the Zariski Conjecture. However, $\frac{6}{10661}\approx  .00056$ is rather small compared to $\frac{1}{17}\approx .059$. 

\end{example}

\section*{Acknowledgements}
Gefördert durch die Deutsche Forschungsgemeinschaft (DFG) - Projektnummer 286237555 - TRR 195 [Funded by the Deutsche Forschungsgemeinschaft (DFG, German Research Foundation) - Project- ID 286237555 - TRR 195]. The work of JB and WD was supported by Project B5 of SFB-TRR 195. The work of FS was supported by Project A23 of SFB-TRR 195. JB and WD also acknowledge support of the Potentialbereich
\emph{SymbTools – Symbolic Tools in Mathematics and their Application} of the Forschungsinitiative Rheinland-Pfalz.

\printbibliography

\end{document}